\author{Damian D{\k a}browski, Tuomas Orponen, and Hong Wang}
\title{How much can heavy lines cover?}
\address{Department of Mathematics and Statistics\\ University of Jyv\"askyl\"a,
P.O. Box 35 (MaD)\\
FI-40014 University of Jyv\"askyl\"a\\
Finland} \email{damian.m.dabrowski@jyu.fi} \email{tuomas.t.orponen@jyu.fi}
\address{Courant institute of mathematical sciences, New York University}
\email{Hong.Wang1991@gmail.com}
\date{\today}
\subjclass[2010]{28A80 (primary) 28A78 (secondary)}
\keywords{Marstrand's slicing theorem, Incidences}
\thanks{D.D is supported by the Research Council of Finland postdoctoral grant \emph{Quantitative rectifiability and harmonic measure beyond the Ahlfors-David-regular setting}, grant
		No. 347123. T.O. is supported by the Research Council of Finland via the project \emph{Approximate incidence geometry}, grant no. 355453, and by the European Research Council (ERC) under the European Union’s Horizon Europe research and innovation programme (grant agreement No 101087499). H.W. is supported by NSF CAREER DMS-2238818 and NSF DMS-2055544.}
\newcommand{\R}{\mathbb{R}}
\newcommand{\N}{\mathbb{N}}
\newcommand{\Z}{\mathbb{Z}}
\newcommand{\calT}{\mathcal{T}}
\newcommand{\calD}{\mathcal{D}}
\newcommand{\calH}{\mathcal{H}}
\newcommand{\calR}{\mathcal{R}}
\newcommand{\calK}{\mathcal{K}}
\newcommand{\Hd}{\dim_{\mathrm{H}}}
\newcommand{\calP}{\mathcal{P}}
\newcommand{\diam}{\operatorname{diam}}
\newcommand{\card}{\operatorname{card}}
\newcommand{\dist}{\operatorname{dist}}
\def\Barint_#1{\mathchoice
          {\mathop{\vrule width 6pt height 3 pt depth -2.5pt
                  \kern -8pt \intop}\nolimits_{#1}}%
          {\mathop{\vrule width 5pt height 3 pt depth -2.6pt
                  \kern -6pt \intop}\nolimits_{#1}}%
          {\mathop{\vrule width 5pt height 3 pt depth -2.6pt
                  \kern -6pt \intop}\nolimits_{#1}}%
          {\mathop{\vrule width 5pt height 3 pt depth -2.6pt
                  \kern -6pt \intop}\nolimits_{#1}}}
\numberwithin{equation}{section}
\theoremstyle{plain}
\newtheorem{thm}[equation]{Theorem}
\newtheorem*{"thm"}{"Theorem"}
\newtheorem{lemma}[equation]{Lemma}
\newtheorem{proposition}[equation]{Proposition}
\newtheorem{claim}[equation]{Claim}
\theoremstyle{definition}
\newtheorem{definition}[equation]{Definition}
\newtheorem{notation}[equation]{Notation}
\theoremstyle{remark}
\newtheorem{remark}[equation]{Remark}
\newcommand{\nref}[1]{(\hyperref[#1]{#1})}
\DeclareMathSymbol{\intop}  {\mathop}{mathx}{"B3}
\begin{document}

\begin{abstract} One formulation of Marstrand's slicing theorem is the following. Assume that $t \in (1,2]$, and $B \subset \R^{2}$ is a Borel set with $\mathcal{H}^{t}(B) < \infty$. Then, for almost all directions $e \in S^{1}$, $\mathcal{H}^{t}$ almost all of $B$ is covered by lines $\ell$ parallel to $e$ with $\Hd (B \cap \ell) = t - 1$.

We investigate the prospects of sharpening Marstrand's result in the following sense: in a generic direction $e \in S^{1}$, is it true that a strictly less than $t$-dimensional part of $B$ is covered by the heavy lines $\ell \subset \R^{2}$, namely those with $\dim_{\mathrm{H}} (B \cap \ell) > t - 1$? A positive answer for $t$-regular sets $B \subset \R^{2}$ was previously obtained by the first author.

The answer for general Borel sets turns out to be negative for $t \in (1,\tfrac{3}{2}]$ and positive for $t \in (\tfrac{3}{2},2]$. More precisely, the heavy lines can cover up to a $\min\{t,3 - t\}$ dimensional part of $B$ in a generic direction. We also consider the part of $B$ covered by the $s$-heavy lines, namely those with $\Hd (B \cap \ell) \geq s$ for $s > t - 1$. We establish a sharp answer to the question: how much can the $s$-heavy lines cover in a generic direction?

Finally, we identify a new class of sets called sub-uniformly distributed sets, which generalise Ahlfors-regular sets. Roughly speaking, these sets share the spatial uniformity of Ahlfors-regular sets, but pose no restrictions on uniformity across different scales. We then extend and sharpen the first author's previous result on Ahlfors-regular sets to the class of sub-uniformly distributed sets.

 \end{abstract}

\maketitle

\setcounter{tocdepth}{1}
\tableofcontents

\section{Introduction}

We start by stating Marstrand's slicing theorem \cite{Mar} as formulated in Mattila's book \cite[Theorem 6.9]{MR3617376}. For $e \in S^{1}$ and $z \in \R^{2}$, we use the notation $\ell_{e,z} := z + \mathrm{span}(e)$.
\begin{thm}[Marstrand, '54]\label{t:mar} Let $t \in (1,2]$, and let $B \subset \R^{2}$ be a Borel set with $\mathcal{H}^{t}(B) < \infty$. Then there exists a $\mathcal{H}^{1}$-null set $E \subset S^{1}$ such that the following holds for all $e \in S^{1} \, \setminus \, E$:
	\begin{equation}\label{form42} \Hd (B \cap \ell_{e,z}) = t - 1 \end{equation}
	for $\mathcal{H}^{t}$ a.e. $z \in B$. \end{thm}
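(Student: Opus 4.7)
The plan is to reduce to a Frostman measure and then establish the upper and lower bounds $\Hd(B\cap\ell_{e,z})\leq t-1$ and $\Hd(B\cap\ell_{e,z})\geq t-1$ separately, using a coarea-type slicing inequality for the former and a Riesz energy identity for the latter. Before doing either, I would invoke Frostman's lemma: since $\mathcal{H}^{t}(B)<\infty$ and $t\in(1,2]$, for every $\varepsilon>0$ there exist a compact set $B'\subset B$ with $\mathcal{H}^{t}(B\setminus B')<\varepsilon$ and a non-trivial Borel measure $\mu$ supported on $B'$ with $\mu(B(x,r))\leq C r^{t}$ for every $x\in\R^{2}$ and $r>0$. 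It suffices to prove the conclusion of the theorem for $\mu$-almost every $z$; letting $\varepsilon\to 0$ along a countable sequence then recovers the statement for $\mathcal{H}^{t}$-a.e.\ $z\in B$.

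For each $e\in S^{1}$, let $\pi_{e}:\R^{2}\to\R$ denote the orthogonal projection onto $\spa(e)^{\perp}$, so that the fibres $\pi_{e}^{-1}(y)$ parametrise the lines $\ell_{e,z}$. The upper bound would come from the coarea/slicing inequality for Hausdorff measure: for every $s\geq 0$ and every Borel set $A\subset\R^{2}$,
$$\int_{\R}\mathcal{H}^{s}(A\cap\pi_{e}^{-1}(y))\,d\mathcal{H}^{1}(y)\;\leq\; c\,\mathcal{H}^{s+1}(A).$$
Taking $A=B$ and $s=t-1+\delta$ with $\delta>0$, the right-hand side vanishes because $\mathcal{H}^{t}(B)<\infty$ implies $\mathcal{H}^{t+\delta}(B)=0$; hence $\mathcal{H}^{t-1+\delta}(B\cap\pi_{e}^{-1}(y))=0$ for $\mathcal{H}^{1}$-a.e.\ $y$. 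To transfer this from ``$\mathcal{H}^{1}$-a.e.\ line'' to ``$\mu$-a.e.\ point'', I would appeal to Marstrand's projection theorem: since $t>1$, for $\mathcal{H}^{1}$-a.e.\ $e\in S^{1}$ the pushforward $\pi_{e}\mu$ is absolutely continuous with respect to $\mathcal{H}^{1}$ (in fact with $L^{2}$ density), so any $\mathcal{H}^{1}$-null family of lines is $\mu$-null. Letting $\delta\to 0$ through a countable sequence yields $\Hd(B\cap\ell_{e,z})\leq t-1$ for $\mu$-a.e.\ $z$.

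For the lower bound I would fix $0<s<t-1$ and use a Riesz energy disintegration. Setting $\nu_{e}:=\pi_{e}\mu$, disintegrate $\mu=\int\mu_{e,y}\,d\nu_{e}(y)$ along $\pi_{e}$-fibres. The key estimate is
$$\int_{S^{1}}\int_{\R} I_{s}(\mu_{e,y})\,d\nu_{e}(y)\,d\mathcal{H}^{1}(e)\;\leq\; c\,I_{s+1}(\mu),$$
where $I_{s}(\cdot)$ denotes the $s$-Riesz energy. This follows from Fubini after noting the elementary bound $\int_{S^{1}}|\pi_{e}(x)-\pi_{e}(x')|^{-1}\,d\mathcal{H}^{1}(e)\leq c|x-x'|^{-1}$ for $x\neq x'$, which converts the fibre-energy into a one-degree-higher energy of $\mu$. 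Since $\mu$ is $t$-Frostman with $s+1<t$, $I_{s+1}(\mu)<\infty$, so $I_{s}(\mu_{e,y})<\infty$ for $\mathcal{H}^{1}$-a.e.\ $e$ and $\nu_{e}$-a.e.\ $y$, giving $\Hd(\spt\mu_{e,y})\geq s$. Taking $s\nearrow t-1$ along a countable sequence completes the lower bound.

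The main obstacle I anticipate is the energy identity in the lower bound: one must set up the disintegration carefully and verify that the fibre measures $\mu_{e,y}$ are non-trivial on $\spt\mu\cap\ell_{e,z}$ for $\nu_{e}$-a.e.\ $y$. This, as well as the descent from ``a.e.\ line'' to ``$\mu$-a.e.\ point'' in the upper bound, both rely on the absolute continuity of $\pi_{e}\mu$ for a.e.\ $e$, which is exactly what Marstrand's projection theorem gives when $t>1$. Thus the two halves of the argument hinge on the same projection-theoretic input, and the hypothesis $t>1$ is essential.
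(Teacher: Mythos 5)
The paper itself does not prove this theorem; it cites Marstrand '54 and Mattila's book, so what matters is whether your sketch gives a correct proof. Your overall strategy---Frostman reduction, coarea (Eilenberg) inequality for the upper bound, energy disintegration for the lower bound, with the projection theorem used to pass from ``Lebesgue-a.e.\ $y$'' to ``$\pi_e\mu$-a.e.\ $y$''---is the standard route and the upper bound half is fine as written.

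The gap is in the justification of the energy inequality in the lower bound. The ``elementary bound'' you invoke,
\begin{displaymath}
\int_{S^{1}}|\pi_{e}(x)-\pi_{e}(x')|^{-1}\,d\mathcal{H}^{1}(e)\leq c\,|x-x'|^{-1},
\end{displaymath}
is false: writing $x-x'=r u$ with $u\in S^{1}$, the left-hand side equals $r^{-1}\int_{S^{1}}|\sin\theta|^{-1}\,d\theta=\infty$. The bound $\int_{S^1}|\pi_e(x)-\pi_e(x')|^{-\sigma}\,d\mathcal{H}^1(e)\lesssim_\sigma|x-x'|^{-\sigma}$ is only available for $\sigma<1$, and in any case it is the kernel for \emph{projection} energies, not slices. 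In the slicing disintegration the fibre measures $\mu_{e,y}$ only charge pairs on the \emph{same} fibre, so the relevant quantity is the Jacobian of the disintegration, not the projection distance. Concretely, approximating $\mu_{e,y}$ by $(2\delta)^{-1}\mu\restriction_{T_\delta(e,y)}$ and applying Fubini, the weight that a pair $(x,x')$ picks up in $\int_{S^1}\int_\R I_s(\mu_{e,y})\,d\mathcal{H}^1(y)\,d\mathcal{H}^1(e)$ is $(2\delta)^{-2}$ times the measure of $(e,y)$ for which both $x,x'$ lie in $T_\delta(e,y)$; this set has measure $\approx\delta^2/|x-x'|$ (an arc of length $\sim\delta/|x-x'|$ in $e$ times an interval of length $\sim\delta$ in $y$), giving the codimension-correction factor $|x-x'|^{-1}$ and hence the desired bound by $I_{s+1}(\mu)$. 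Note also that the disintegration is against $d\mathcal{H}^1(y)$, not $d\nu_e(y)$ as you wrote; with the normalised slices implied by $d\nu_e$, the integrand would acquire an extra factor $g_e(y)^{-1}$ and the inequality you state need not hold. This does not endanger the conclusion---finiteness of $I_s(\mu_{e,y})$ for $\mathcal{H}^1$-a.e.\ $y$ and then $\nu_e\ll\mathcal{H}^1$ for a.e.\ $e$ is all you need---but the stated key lemma and its proposed justification are both incorrect as written.
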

It was shown by the second author \cite{MR3145914} that in fact $\Hd E \leq 2 - t$. Very roughly speaking, Marstrand's theorem says that the lines $\ell \subset \R^{2}$ failing \eqref{form42} are "exceptional". There are two ways in which \eqref{form42} can fail: either $\ell$ is \emph{light} or \emph{heavy}:
\begin{displaymath} \Hd (B \cap \ell) < t - 1 \quad \text{or} \quad \Hd (B \cap \ell) > t - 1. \end{displaymath}
In \cite[Section 6.4]{MR3617376}, Mattila proposes to study, how large a proportion of $B$ can be covered by such exceptional lines. Marstrand's theorem states, in a generic direction, that this proportion has vanishing $\mathcal{H}^{t}$ measure, but can one do better? The problem only makes sense in a generic direction: for example if $B = A \times A$ with $\Hd A = \tfrac{1}{2}\Hd B$, then all of $B$ is covered by heavy horizontal (or vertical) lines.

In the current paper, we focus on the problem of heavy lines. Let us briefly formalise our key notions. Given $A \subset \R^{2}$, we say that a line $\ell \subset \R^{2}$ is \emph{heavy} (for $A$) if
\begin{displaymath} \Hd (A \cap \ell) > \max\{\Hd A - 1,0\}. \end{displaymath}
More specifically, for a parameter $s > \max\{\Hd A - 1,0\}$, we say that a line $\ell \subset \R^{2}$ is \emph{$s$-heavy} if $\Hd (A \cap \ell) \geq s$.

\begin{definition}\label{p1} Let $A \subset \R^{2}$. For $e \in S^{1}$, let $\mathcal{H}(A,e,s)$ be the family of $s$-heavy lines parallel to $e$. The \emph{$s$-heavy part of $A$ in direction $e$} is the set
	\begin{displaymath} H(A,e,s) := \{z \in A : \ell_{e,z} \in \mathcal{H}(A,e,s)\}. \end{displaymath}
	We also define $\mathcal{H}(A,e)$ as the union of the families $\mathcal{H}(A,e,s)$ for $s > \max\{\Hd A - 1,0\}$, and finally
	\begin{displaymath}H(A,e) := \{z \in A : \ell_{e,z} \in \mathcal{H}(A,e)\}. \end{displaymath}
	Then, we set $\mathfrak{h}(A,s) := \mathrm{ess \, sup}_{e \in S^{1}} \Hd H(A,e,s)$ and $\mathfrak{h}(A) := \mathrm{ess \, sup}_{e \in S^{1}} \Hd H(A,e)$.  \end{definition}
The quantities $\mathfrak{h}(A,s)$ and $\mathfrak{h}(A)$ encode the answer to the question: how much of $A$ can (at most) be covered by the ($s$-)heavy lines in a generic direction? In a previous paper \cite[Theorem 1.3]{dabrowski2023visible}, the first author proved the following in the case where $A \subset \R^{2}$ is compact and Ahlfors-regular: 
\begin{equation}\label{form30} \mathfrak{h}(A) \leq 1. \end{equation} 
In particular, the value of (the upper bound for) $\mathfrak{h}(A)$ is independent of $\Hd A$, and becomes non-trivial if $\Hd A > 1$. Note that \eqref{form30} implies $\mathfrak{h}(A,e,s) \leq 1$ for all $s > \min\{\Hd A - 1,0\}$. When starting the research, it seemed reasonable to believe that
\begin{itemize}
	\item[(a) \phantomsection \label{a}] Ahlfors-regularity should not be necessary for \eqref{form30}, and 
	\item[(b) \phantomsection \label{b}] sharper estimates might hold for $s > \min\{\Hd A - 1,0\}$.
\end{itemize}

For readers familiar with the \emph{Furstenberg set problem}, it will not come as a surprise that Problem \ref{p1} is somehow related to Furstenberg sets. We clarify this connection presently.
\begin{definition} Let $s \in [0,1]$ and $t \in [0,2]$. A set $F \subset \R^{2}$ is an \emph{$(s,t)$-Furstenberg set} if there exists a family $\mathcal{L}$ of lines in $\R^{2}$ with $\Hd \mathcal{L} \geq t$ such that $\Hd (F \cap \ell) \geq s$ for all $\ell \in \mathcal{L}$. \end{definition}

Here $\Hd \mathcal{L}$ refers to the Hausdorff dimension of $\mathcal{L}$ viewed as a subset of $\mathcal{A}(2,1)$, the (metric) space of all affine lines in $\R^{2}$. A concrete metric in $\mathcal{A}(2,1)$ is given by the formula
\begin{displaymath} d_{\mathcal{A}(2,1)}(\ell_{1},\ell_{2}) := \|\pi_{L_{1}} - \pi_{L_{2}}\|_{\mathrm{op}} + |a_{1} - a_{2}|, \end{displaymath} 
where $\ell_{1} = L_{1} + a_{1}$ and $\ell_{2} = L_{2} + a_{2}$, and $L_{1},L_{2} \in \mathcal{G}(2,1)$ are $1$-dimensional subspaces of $\R^{2}$ parallel to $\ell_{1},\ell_{2}$, respectively, and $a_1\in L_1^\perp,$ $a_2\in L_2^\perp$.

In recent work, the third author with K. Ren \cite{2023arXiv230808819R} proved the following (sharp) lower bound for the Hausdorff dimension of $(s,t)$-Furstenberg sets:
\begin{thm}[Ren-Wang]\label{t:renWang} Let $F \subset \R^{2}$ be an $(s,t)$-Furstenberg set with $s \in (0,1]$ and $t \in [0,2]$. Then,
	\begin{displaymath} \Hd F \geq \min\{s + t,\tfrac{3s + t}{2},s + 1\}. \end{displaymath}
\end{thm}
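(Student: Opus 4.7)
The plan is to discretise at a scale $\delta > 0$ and reduce to a refined incidence problem between $\delta$-balls and $\delta$-tubes. After standard pigeonholing and regularisation (which cost only $\delta^{o(1)}$ factors), I would replace $F$ by a collection $\calP$ of $\approx \delta^{-\alpha}$ essentially disjoint $\delta$-balls, where $\alpha$ is any number slightly below $\Hd F$, and replace $\calL$ by a $(\delta,t)$-regular family $\calT$ of $\delta$-tubes with $|\calT| \approx \delta^{-t}$, each meeting a $(\delta,s)$-subset of $\calP$ of cardinality $\approx \delta^{-s}$. The Furstenberg hypothesis then becomes the incidence lower bound $I(\calP,\calT) \gtrsim \delta^{-(s+t)}$, and the goal reduces to showing $\alpha \geq \min\{s+t,(3s+t)/2,s+1\}$.

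The two easier bounds come from the extreme multiplicity regimes. The bound $\alpha \geq s+t$ appears whenever incidences are ``spread out'': if a typical ball meets only $\delta^{o(1)}$ tubes, direct double counting yields $|\calP| \gtrsim \delta^{-(s+t)}$. The bound $\alpha \geq s+1$ becomes binding for large $t$ and can be extracted by restricting the direction set of $\calT$ to a positive-measure subfamily of $S^{1}$ and combining the $s$-dimensional slices over the one-parameter direction family via a Fubini/Marstrand-type slicing argument applied to a fibration of $F$.

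The substantive content is the third inequality $\alpha \geq (3s+t)/2$. Under the counter-assumption $\alpha < (3s+t)/2$, the total incidence count forces a positive proportion of incidences to concentrate on balls of high multiplicity $\mu \approx \delta^{-(s+t-\alpha)}$, which in turn produces many pairs of tubes meeting on a common $\delta$-ball. I would then exploit this pair-structure by means of a high/low frequency decomposition together with the Orponen--Shmerkin $ABC$ / discretised radial-projection inequality for $(\delta,s)$-sets, applied to the set of tube-pair directions emanating from the high-multiplicity balls. The output of that machine is a lower bound on $|\calP|$ which contradicts $\alpha < (3s+t)/2$, closing the argument.

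The main obstacle is executing this third step without losing the sharp exponent $3/2$. A genuine multi-scale induction seems unavoidable: one partitions $[\delta,1]$ into scale-intervals on which $\calP$ and $\calT$ are $(\delta,s)$- and $(\delta,t)$-Katz--Tao-uniform, and at each scale applies a broad/narrow dichotomy. The broad case is handled by the discretised incidence theorem, while the narrow case feeds into an induction at a smaller parameter $t$. Coordinating the regularisation parameters so that no scale degrades the $3/2$-factor is the heart of the difficulty, and is the reason this theorem was only established recently.
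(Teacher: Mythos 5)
The paper does not prove this theorem. It is stated as an external result, cited from Ren--Wang \cite{2023arXiv230808819R}, and is then used as a black box in the proof of Proposition \ref{propPartial}. So there is no internal proof for your sketch to be compared against; instead your proposal has to be judged on whether it would actually establish the Ren--Wang bound.

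Your treatment of the two ``easy'' thresholds is plausible in spirit: $\alpha \geq s+t$ from double-counting when incidences are spread out, and $\alpha \geq s+1$ from a Fubini/slicing argument when the direction set of the tube family has positive measure (this is essentially the classical Wolff-type bound valid for $t \geq 1$). These can be made rigorous, though the slicing step requires some care since Hausdorff dimension does not Fubini; one typically works at the level of covering numbers rather than dimensions directly.

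The genuine gap is in the $(3s+t)/2$ threshold, which is the entire content of the theorem. You invoke the ``Orponen--Shmerkin ABC / discretised radial-projection inequality'' applied to the tube-pair directions emanating from high-multiplicity balls, feeding into a high/low decomposition and a broad/narrow multi-scale induction, and then assert that ``the output of that machine'' produces a contradiction. This does not describe a proof: the specific tools you name are not the central engine of the Ren--Wang argument, and even granting the machinery, you do not indicate how the pieces are assembled so that the sharp $3/2$ factor survives the multi-scale recursion. You acknowledge this yourself (``the heart of the difficulty''), which is precisely why it cannot be waved away — Ren--Wang's contribution was exactly to close that gap, via a delicate two-ends/shading induction and a reduction to the $t = 2s$ critical case, rather than a direct application of radial-projection estimates. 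As written, the proposal is a statement of the problem's difficulty rather than a proof.
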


In Theorem \ref{t:renWang} it is not necessary to assume any measurability of $F$. Using this information, we can make progress on the problem of heavy lines:
\begin{proposition}\label{propPartial} Let $A \subset \R^{2}$ be a set with $\Hd A = t \in [1,2]$. If $s > \tfrac{1}{3}(2t - 1)$, then $\mathcal{H}(A,s,e) = \emptyset$ for almost all $e \in S^{1}$, therefore $\mathfrak{h}(A,s) = 0$. For $t - 1 < s \leq \tfrac{1}{3}(2t - 1)$, we have
	\begin{displaymath} \mathfrak{h}(A,s) \leq \min\{2t - 3s,t\}. \end{displaymath}
	In particular $\mathfrak{h}(A) \leq \min\{3 - t,t\}$. \end{proposition}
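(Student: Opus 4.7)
The plan is a Furstenberg-set reduction: realise the family of $s$-heavy lines for $A$ as a subset $\mathcal{L}^{\ast} \subset \mathcal{A}(2,1)$ and apply Theorem \ref{t:renWang} to $A$ viewed as an $(s, \Hd \mathcal{L}^{\ast})$-Furstenberg set. Throughout, the hypothesis $s > t - 1$ is used via the consequence $s + 1 > t$, which rules out the term $s + 1$ in every invocation of the Ren--Wang minimum.

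For the first assertion, suppose $E := \{e \in S^{1} : \mathcal{H}(A,e,s) \neq \emptyset\}$ has $\mathcal{H}^{1}(E) > 0$. A Borel selection chooses one $s$-heavy line in each direction $e \in E$, so this line family has dimension at least $\Hd E \geq 1$ in $\mathcal{A}(2,1)$, exhibiting $A$ as an $(s,1)$-Furstenberg set. Theorem \ref{t:renWang} then forces $t \geq \tfrac{3s+1}{2}$, i.e.\ $s \leq \tfrac{2t-1}{3}$, a contradiction when $s > \tfrac{2t-1}{3}$. Hence $\mathcal{H}(A,e,s) = \emptyset$ for $\mathcal{H}^{1}$-a.e.\ $e$ and $\mathfrak{h}(A,s) = 0$.

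For the second assertion, $\mathfrak{h}(A,s) \leq t$ is immediate from $H(A,e,s) \subset A$. For $\mathfrak{h}(A,s) \leq 2t - 3s$, assume $\Hd H(A,e,s) > d$ on a set $E \subset S^{1}$ with $\mathcal{H}^{1}(E) > 0$, and set $\lambda_{e} := \Hd \mathcal{H}(A,e,s)$. Since $H(A,e,s)$ is contained in the cylinder $\pi_{e^{\perp}}^{-1}(\pi_{e^{\perp}}(H(A,e,s)))$ with $\lambda_{e}$-dimensional base and $1$-dimensional fibres, the general bound $\Hd(X \times \R) \leq \Hd X + 1$ yields $\lambda_{e} > d - 1$. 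The aggregated set $\mathcal{L}^{\ast} := \bigcup_{e \in E} \mathcal{H}(A,e,s)$ then fibres over $E$ with fibres of dimension $> d-1$, and a product-Frostman construction (fix $\lambda_{0} = d - 1$ and integrate a $(1-\varepsilon)$-Frostman measure on $E$ against Borel-measurably chosen $(\lambda_{0}-\varepsilon)$-Frostman measures on the fibres) produces a Frostman measure on $\mathcal{L}^{\ast}$ of exponent $1 + \lambda_{0} - 2\varepsilon$, so that $\Hd \mathcal{L}^{\ast} \geq d$. Now $A$ is an $(s,d)$-Furstenberg set; Theorem \ref{t:renWang} together with $s + 1 > t$ gives $d \leq t - s$ or $d \leq 2t - 3s$. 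In our range $s \leq \tfrac{2t-1}{3} \leq t/2$ the inequality $t - s \leq 2t - 3s$ holds, so either alternative forces $d \leq 2t - 3s$. Finally, $\mathfrak{h}(A) \leq \min\{3-t, t\}$ follows by taking $s_{n} \downarrow t - 1$ and using $H(A,e) = \bigcup_{n} H(A,e,s_{n})$.

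The main obstacle is the product-Frostman lower bound $\Hd \mathcal{L}^{\ast} \geq 1 + \lambda_{0}$: it requires a Borel-measurable selection of Frostman measures on the fibres $\mathcal{H}(A,e,s)$ as $e$ varies together with a uniform ball estimate for the integrated measure with respect to the metric $d_{\mathcal{A}(2,1)}$; once this is in hand the rest is an essentially mechanical application of Theorem \ref{t:renWang}.
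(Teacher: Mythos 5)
Your proof is correct and follows the paper's route: realise the $s$-heavy lines as a subset $\mathcal{L}^{\ast}\subset\mathcal{A}(2,1)$, lower-bound $\Hd\mathcal{L}^{\ast}$ by adding the fibre dimension $\Hd\mathcal{H}(A,e,s)>d-1$ to $\Hd E=1$, and feed the resulting $(s,d)$-Furstenberg set into Theorem~\ref{t:renWang}, with $s+1>t$ ruling out the third term of the minimum (the paper phrases the same argument as a direct contradiction with $\mathfrak{h}'\nearrow\mathfrak{h}$ in place of your $d$). The one thing to correct is that the ``main obstacle'' you flag is not a genuine gap: the Fubini-type lower bound $\Hd\mathcal{L}^{\ast}\geq\Hd E+\inf_{e\in E}\Hd\mathcal{H}(A,e,s)$ holds for \emph{arbitrary} sets with no Borel-measurable Frostman selection needed --- this is Falconer's \emph{Fractal Geometry}, Corollary~7.12, whose proof runs through an upper-integral Fubini inequality for Hausdorff measures rather than through a product-Frostman construction --- and the paper invokes exactly the same fact without comment in the step ``it follows that $\Hd\mathcal{H}(A,s)\geq\mathfrak{h}$''.
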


\begin{proof} Assume first that $s > (2t - 1)/3$. Assume to the contrary that $\mathcal{H}(A,s,e) \neq \emptyset$ for positively many directions $e \in S^{1}$. This implies that there exist positively many $e \in S^{1}$ such that $\Hd (A \cap \ell_{e}) \geq s$ for at least one line $\ell_{e}$ parallel to $e$. Consequently $A$ is an $(s,1)$-Furstenberg set, and by Theorem \ref{t:renWang}
	\begin{displaymath} t = \Hd A \geq \tfrac{3s + 1}{2} > \tfrac{(2t - 1) + 1}{2} = t. \end{displaymath}
	This is a contradiction.
	
	Assume next $t - 1 < s < (2t - 1)/3$ (the endpoint $s = \tfrac{1}{3}(2t - 1)$ can be eventually treated by resorting to a sequence $s_{j} \nearrow \tfrac{1}{3}(2t - 1)$). 
	
	Let $\mathcal{H}(A,s) := \bigcup_{e \in S^{1}} \mathcal{H}(A,e,s)$ be the collection of all $s$-heavy lines in all directions. Abbreviate $\mathfrak{h} := \mathfrak{h}(A,s)$. Since the bound $\mathfrak{h} \leq t$ is trivial, it suffices to prove that $\mathfrak{h} \leq 2t - 3s$. We make a counter assumption: $\mathfrak{h} > 2t - 3s$. In particular $\mathfrak{h} > 1$ by the assumption $s < (2t - 1)/3$. By definitions, for any $\mathfrak{h}' \in (1,\mathfrak{h})$ there exist positively many directions $e \in S^{1}$ such that $\Hd H(A,e,s) > \mathfrak{h}'$. For all such directions $e \in S^{1}$, we have
	\begin{displaymath} \Hd \mathcal{H}(A,e,s) \geq \mathfrak{h}' - 1, \end{displaymath}
	because the lines in $\mathcal{H}(A,e,s)$ need to cover a set of dimension at least $\mathfrak{h}'$. Letting $\mathfrak{h}' \nearrow \mathfrak{h}$, it follows that $\Hd \mathcal{H}(A,s) \geq \mathfrak{h}$, and therefore $A$ is an $(s,\mathfrak{h})$-Furstenberg set. By Theorem \ref{t:renWang}, we deduce that $\Hd A \geq \min\{s + \mathfrak{h},\tfrac{3s + \mathfrak{h}}{2},s + 1\}$.
	
	All the three possibilities lead to a contradiction. If the minimum is the first term, then $\Hd A > s + (2t - 3s) = 2t - 2s \geq t$, because $s \leq (2t - 1)/3 \leq t/2$. If the minimum is the second term, then $\Hd A > \tfrac{1}{2}(3s + 2t - 3s) = t$ by the hypothesis $\mathfrak{h} > 2t - 3s$. Finally, if the minimum is the third term, then $\Hd A \geq s + 1 > t$, since $s > t - 1$.
	
	This completes the proof, except for final "in particular" part. However, the estimate for $\mathfrak{h}(A)$ follows from the cases $s > t - 1$ treated above by letting $s \searrow t - 1$.  \end{proof}

% According to \cite[Corollary 1.7]{fu2022incidence}, it holds $\Hd \mathcal{H}(A) \leq 3 - t$. This easily implies that $\Hd \mathcal{H}(A,e) \leq 2 - t$ for $\mathcal{H}^{1}$ almost every $e \in S^{1}$. Consequently 
%\begin{displaymath} \Hd H(A,e) \leq 1 + \Hd \mathcal{H}(A,e) \leq 3 - t \end{displaymath}
%for $\mathcal{H}^{1}$ almost every $e \in S^{1}$.
%\end{proof}
%Proposition \ref{fuRenProp} falls short from establishing \eqref{form30} for general Borel sets $A \subset \R^{2}$, and is particularly unsatisfactory for $t < \tfrac{3}{2}$. Clearly $\Hd H(A,e) \leq \Hd A = t$ for every $e \in S^{1}$, and this trivial bound beats "$3 - t$" when $t < \tfrac{3}{2}$. 

Let us then compare Proposition \ref{propPartial} to \eqref{form30}, namely the previously established result for Ahlfors-regular sets. Have we made progress with hypotheses \nref{a}-\nref{b}?

Let $A \subset \R^{2}$ be a Borel set with $\Hd A = t \in [1,2]$. If $A$ is Ahlfors-regular, \eqref{form30} states that $\mathfrak{h}(A) \leq 1$. In contrast, Proposition \ref{propPartial} only yields $\mathfrak{h}(A) \leq \min\{3 - t,t\}$. For instance, if $t \leq \tfrac{3}{2}$, Proposition \ref{propPartial} only returns the trivial bound $\mathfrak{h}(A) \leq t$. So, we have made virtually no progress in confirming hypothesis \nref{a}, especially if $\Hd A \leq \tfrac{3}{2}$.

How about hypothesis \nref{b}? Indeed, if $s > \tfrac{1}{3}(2t - 1)$, we have shown that $\mathfrak{h}(A,s) = 0$, which is certainly sharper than \eqref{form30}. However, for $s = \tfrac{1}{3}(2t - 1)$ Proposition \ref{propPartial} only promises that $\mathfrak{h}(A,s) \leq 1$. So, we are unable to improve on \eqref{form30}, unless $s > \tfrac{1}{3}(2t - 1)$. Thus, for Borel sets, Proposition \ref{propPartial} verifies neither \nref{a} nor \nref{b}.

It turns out that the bound in Proposition \ref{propPartial} is sharp:

\begin{thm}\label{mainExampleIntro} For every $t \in (1,2]$ and $s \in [t - 1,\tfrac{1}{3}(2t - 1)]$ there exists a compact set $K \subset \R^{2}$ such that $\Hd K = t$ and $\mathfrak{h}(K,s) = \min\{2t - 3s,t\}$. In fact, $K$ can be selected so that
	\begin{displaymath} \Hd H(K,e,s) = \min\{2t - 3s,t\} \end{displaymath}
	for every $e \in S^{1}$. \end{thm}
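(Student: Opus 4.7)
The plan is to produce $K$ as an explicit compact Furstenberg-type extremizer. The substantive task is to produce the matching lower bound $\Hd H(K, e, s) \geq \mathfrak{h}$, where $\mathfrak{h} := \min\{2t - 3s, t\}$, in every direction $e \in S^1$; the upper bound $\Hd H(K, e, s) \leq \mathfrak{h}$ would then need to be verified \emph{a posteriori} from the specific structure of $K$, as it is not automatic from Proposition \ref{propPartial} (whose conclusion only bounds the essential supremum). In the construction sketched below this upper bound follows by applying Theorem \ref{t:renWang} directly to $K$ with the specific line family built into the construction.

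For the lower bound I would first construct a \emph{seed} compact set $K_0 \subset \R^{2}$ of Hausdorff dimension $t$ saturating the middle branch $(3s + \mathfrak{h})/2 = t$ of Theorem~\ref{t:renWang}. Concretely, $K_0$ should contain a family $\mathcal{L}_0 \subset \mathcal{A}(2,1)$ of lines with $\Hd \mathcal{L}_0 = \mathfrak{h}$, each meeting $K_0$ in Hausdorff dimension exactly $s$, and with the directional subfamily $\mathcal{L}_{0, e}$ of Hausdorff dimension $\mathfrak{h} - 1$ for every $e$ in some open arc $E_0 \subset S^{1}$. In the boundary case $s = t - 1$ with $t \leq 3/2$ (so $\mathfrak{h} = t$), any Ahlfors $t$-regular $K_0$ works on a full-measure set of directions, by Marstrand's theorem: $\mathcal{H}^t$-a.e.\ slice has dimension $t - 1 = s$, hence is $s$-heavy. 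For the remaining parameter regimes I would adapt a sharpness example for the $(3s + t)/2$-branch of the Furstenberg set problem (e.g.\ fractal-percolation constructions in the style of Shmerkin--Wang, or deterministic multi-scale Cantor constructions following Wolff, Keleti, or Molter--Rela), with Cantor parameters tuned so that $(3s + \mathfrak{h})/2 = t$ is saturated and the directional subfamilies attain dimension $\mathfrak{h} - 1$ uniformly throughout an open arc $E_0$.

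To upgrade the directional behaviour from $E_0$ to all of $S^{1}$, I would pick finitely many angles $\theta_{1}, \ldots, \theta_{N}$ with $\bigcup_{i} R_{-\theta_{i}} E_{0} = S^{1}$ (possible since $E_{0}$ contains an open arc) and set $K := \bigcup_{i=1}^{N} R_{\theta_{i}} K_{0}$. This is a finite union of compact sets, so $K$ is compact; finite-union stability of Hausdorff dimension gives $\Hd K = t$. For any $e \in S^{1}$ choose $i$ with $R_{-\theta_{i}} e \in E_{0}$; the rotational identity
\begin{displaymath}
H(R_{\theta_{i}} K_{0}, e, s) = R_{\theta_{i}} H(K_{0}, R_{-\theta_{i}} e, s)
\end{displaymath}
together with $R_{\theta_{i}} K_{0} \subset K$ gives $\Hd H(K, e, s) \geq \mathfrak{h}$.

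The main obstacle is the construction of the seed $K_{0}$ itself: producing a set of Hausdorff dimension \emph{exactly} $t$ that carries a $(\mathfrak{h} - 1)$-dimensional family of parallel heavy lines in every direction of a fixed open arc $E_{0}$, at the critical relation $(3s + \mathfrak{h})/2 = t$. A naive ``sheet'' construction -- a one-parameter family of direction-$e$ sheets, each consisting of an $(\mathfrak{h} - 1)$-dimensional family of parallel lines with an independent $s$-dimensional Cantor fiber on each line -- overshoots with total dimension $s + \mathfrak{h}$, which exceeds $t$ unless $s = t/2$. To achieve the smaller Ren--Wang dimension $(3s + \mathfrak{h})/2$ one is forced into an overlap-heavy multi-scale scheme in which each point of $K_{0}$ simultaneously lies on many heavy lines of different directions; verifying that the per-direction line family attains dimension $\mathfrak{h} - 1$ (rather than merely $\mathfrak{h} - 1$ in an averaged sense) throughout the construction is the technical heart of the argument.
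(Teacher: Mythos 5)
Your proposal has a genuine gap at its acknowledged ``technical heart'': you never construct the seed $K_{0}$, and the direction you suggest for constructing it is almost certainly a dead end. You propose a single ``overlap-heavy multi-scale scheme'' in which each point of $K_{0}$ lies on many heavy lines of different directions. But the paper's Theorem \ref{thm:dim-estimate} shows that any sub-uniformly distributed compact set $K$ of dimension $t$ satisfies the strictly stronger bound $\mathfrak{h}(K,s) \leq t - s$, and $t - s < 2t - 3s$ whenever $s < t/2$, which holds throughout the relevant parameter range (apart from the corner $t = 2$, $s = 1$). A single coherent multi-scale Cantor construction of the type you cite (Wolff, Keleti, Molter--Rela, fractal percolation) is naturally sub-uniformly distributed, so it cannot attain the extremal value $\min\{2t - 3s, t\}$. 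The extremal example must be built from two pieces with genuinely incompatible branching.

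That is precisely what the paper does. Working in the dual picture (a line set $\mathcal{L}$ and a Cantor-type point set $R$), the paper constructs $\mathcal{L} = \mathcal{L}_{F} \cup \mathcal{L}_{G}$ over a rapidly decreasing double scale sequence $1 \gg \delta_{0} \gg \Delta_{1} \gg \delta_{1} \gg \cdots$. The family $\mathcal{L}_{F}$ has dimension $\leq (3s + \tau)/2 \leq t$ and passes an $s$-dimensional pencil of lines through every $z \in R$ (this is the Furstenberg-sharpness piece, built from projections with small fibers). The second family $\mathcal{L}_{G}$ has dimension $\leq \tau$ and is chosen so that $\mathcal{L}_{G}(R_{x})$ has dimension $\tau$ for every $x$. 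The set $R$ has $2$-dimensional branching between scales $\Delta_{n} > \delta_{n}$ and $\tau$-dimensional branching between $\delta_{n} > \Delta_{n+1}$, while $\mathcal{L}_{G}$ does exactly the opposite. The point is that $\mathcal{L}_{F}$ alone is inadequate: the $s$-heavy lines of $\mathcal{L}_{F}$ through a slice $R_{x}$ cover only a set of dimension well below $\tau$. It is the lines of $\mathcal{L}_{G}$ -- which are also $s$-heavy because they pass through $R$ and meet the $\mathcal{L}_{F}$-pencils there -- that supply the full $\tau$-dimensional covered family. Your plan of making one seed $K_{0}$ do both jobs cannot work; a union of two complementary seeds is forced.

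A secondary issue: your claim that the per-direction upper bound $\Hd H(K,e,s) \leq \min\{2t - 3s, t\}$ ``follows by applying Theorem \ref{t:renWang} directly to $K$'' is not correct as stated. The $s$-heavy lines in a single fixed direction form a family of dimension $\leq 1$ in $\mathcal{A}(2,1)$, and applying Ren--Wang to that family only yields $\Hd K > t$ if $\Hd H(K,e,s)$ exceeds $\min\{2t-3s,t\}$ by roughly $1$, not by an arbitrarily small amount. The single-direction upper bound has to be read off from the explicit structure of the construction (e.g.\ $\Hd \mathcal{L} \leq t$ together with the known dimensions of the pieces), not from the incidence theorem alone. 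The rotational-covering step you propose is harmless but also essentially redundant: the construction already handles all directions of a whole arc simultaneously, and one finishes by taking a finite union of rotates exactly as you describe.
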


\begin{remark} We can now answer the question on how much Theorem \ref{t:mar} can be sharpened for heavy lines. If $\Hd A = t \in (\tfrac{3}{2},2]$, then the heavy lines in $\mathcal{H}^{1}$ almost every direction can only cover a $(3 - t)$-dimensional set (by Proposition \ref{propPartial}), where $3 - t < t$.
	
	If $t \in (1,\tfrac{3}{2}]$, the heavy lines in every direction may cover a $t$-dimensional set. For $t \in (1,\tfrac{3}{2})$, \emph{a fortiori}, the $\tfrac{t}{3}$-heavy lines in every direction may cover a $t$-dimensional set.
	
	(For $t = \tfrac{3}{2}$, the example showing that heavy lines can cover a $t$-dimensional set is constructed as a union of the sets $K = K_{t,s_{j}}$ in Theorem \ref{mainExampleIntro} with $s_{j} = \tfrac{1}{2} + \tfrac{1}{j}$.) \end{remark}

\begin{remark} Proposition \ref{propPartial} and Theorem \ref{mainExampleIntro} tell us something about $\Hd H(A,e)$ for generic $e \in S^{1}$. A related question concerns $\Hd \mathcal{H}(A,e)$. For this problem, a sharp answer was given earlier by Fu and Ren \cite[Corollary 1.7]{fu2022incidence}. In fact $\Hd \mathcal{H}(A,e) \leq 2 - t$ for $\mathcal{H}^{1}$ almost every $e \in S^{1}$. An easy generalisation of their argument shows, more generally, that if $s \in (t - 1,1]$, then $\Hd \mathcal{H}(A,e,s) \leq 1 - s$ for $\mathcal{H}^{1}$ almost every $e \in S^{1}$. 

Given these bounds, one may be tempted to pursue the following formal "proof" of the inequality $\Hd H(A,e) \leq 1$. For $s \in (t - 1,1]$ and $\epsilon > 0$, we "sum up" the dimension of the line family and the upper bound for the heaviness to obtain
\begin{equation}\label{falseFubini} \Hd [H(A,e,s) \, \setminus \, H(A,e,s + \epsilon)] \leq (1 - s) + (s + \epsilon) = 1 + \epsilon. \end{equation}
Then, we vary $s \in (t - 1,1]$ and finally let $\epsilon \to 0$ to deduce that $\Hd H(A,e) \leq 1$. Of course \eqref{falseFubini} is suspicious, since the argument relies on a "Fubini theorem for Hausdorff dimension" which is generally false. In fact, Theorem \ref{mainExampleIntro} shows that this argument is impossible to make rigorous, at least for general compact sets.
\end{remark}

\subsection{Sub-uniformly distributed sets} Theorem \ref{mainExampleIntro} says that Proposition \ref{propPartial} is sharp in the class of Borel, or even compact, sets. For Ahlfors-regular sets \eqref{form30} says something much stronger. Which property of Ahlfors-regular sets explains this discrepancy? 

The examples constructed for Theorem \ref{mainExampleIntro} have the form $K = K_{1} \cup K_{2}$, where both $K_{1},K_{2}$ are Cantor-type sets of dimension $t$, but with wildly different "branching" behaviour. Informally speaking, the set $K_{1}$ looks $2$-dimensional between certain scales $[\delta_{n},\Delta_{n}]$ and $t$-dimensional between other scales $[\Delta_{n + 1},\delta_{n}]$. The set $K_{2}$ has the same properties, but with the roles of the scales reversed. In particular, it would be ill-defined to say that $K_{1} \cup K_{2}$ looks $s$-dimensional between the scales $[\delta_{n},\Delta_{n}]$, for any $s \in [0,2]$.

To improve on Theorem \ref{mainExampleIntro}, we introduce the following definition which aims to (i) extend Ahlfors-regular sets, and (ii) rule out the adverse behaviour described above:

\begin{definition}[Sub-uniformly distributed sets]\label{def:subUniform} We say that a bounded set $K \subset \R^{d}$ is \emph{sub-uniformly distributed} if there exists a constant $C > 0$ such that
	\begin{equation}\label{form43} |K|_{R} \cdot |K \cap Q|_{r} \le C|K|_{r}, \qquad Q \in \mathcal{D}_{R}(K), \, 0 < r \leq R < \infty. \end{equation}
\end{definition}

\begin{remark} At first we considered the slightly stronger definition of \emph{uniformly distributed sets} which would otherwise be defined as above, except that we require a $2$-sided estimate $|K \cap Q|_{r} \sim |K|_{r}/|K|_{R}$. The caveat of the stronger definition is that the uniformity of a set might depend on the choice of a dyadic system, or whether the set $Q$ is taken to be an $R$-disc or an $R$-square. The notion of sub-uniformly distributed sets is blind to such nuances, yet strong enough for our purposes. 
	
	Clearly $t$-Ahlfors-regular sets are sub-uniformly distributed: $|K \cap Q|_{r} \lesssim (R/r)^{t} \sim |K|_{r}/|K|_{R}$ for all $Q \in \mathcal{D}_{R}$ and $0 < r \leq R \leq \diam(K)$. For $R > \diam(K)$ the estimate \eqref{form43} follows simply from $|K|_{R} \sim 1$. However, sub-uniformly distributed sets are much more general than Ahlfors-regular sets: e.g. any Cantor type set obtained by replacing squares of level $n$ by $C(n) \in \N$ squares squares of level $n + 1$ is sub-uniformly distributed.
	
	The union of two sub-uniformly distributed sets is generally not sub-uniformly distributed. Indeed, the sets in Theorem \ref{mainExampleIntro} can be written as a union of two sub-uniformly distributed sets, and they fail the conclusion of Theorem \ref{thm:dim-estimate}.
	
	Finally, we mention that the notion of (sub-)uniformly distributed sets was inspired by the notion of $\{\Delta_{j}\}_{n = 0}^{n}$-uniform sets which has proved useful in dealing with Furstenberg sets and related problems in approximate incidence geometry \cite{OS23,2023arXiv230808819R,Sh}. \end{remark}

We then arrive at our main result for sub-uniformly distributed sets:

\begin{thm}\label{thm:dim-estimate}
	Let $K \subset \R^{2}$ be compact and sub-uniformly distributed with $\dim_{\mathrm{H}}(K)=t\in (0,2)$. For $\max\{t-1,0\}<s\leq 1$ we have
	\begin{equation}\label{eq:dim-estim}
	\mathfrak{h}(K,s) \leq \max\{t-s,0\}.
	\end{equation}
	In particular, $\mathfrak{h}(K) \leq 1$.
\end{thm}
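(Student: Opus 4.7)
\emph{Proof plan.} I plan to argue by contradiction. Assume $K$ is compact, sub-uniformly distributed, $\Hd K = t \in (0, 2)$, $s \in (\max\{t-1, 0\}, 1]$, and $\mathfrak{h}(K, s) > t - s$. Then there exist $\mathfrak{h}' > t - s$ and a set $E \subset S^1$ of positive $\mathcal{H}^1$-measure such that $\Hd H(K, e, s) \geq \mathfrak{h}'$ for every $e \in E$. The goal is to derive $\Hd H(K, e, s) \leq t - s$ for $\mathcal{H}^1$-a.e.\ $e$, contradicting $\mathfrak{h}' > t - s$.

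The main device would be a slicing decomposition by heaviness level. For $s' \in [s, 1]$ set $H_{s'}(K, e) := \{x \in K : \Hd(K \cap \ell_{e, x}) = s'\}$, so $H(K, e, s) = \bigcup_{s' \in [s, 1]} H_{s'}(K, e)$. Each $x \in H_{s'}(K, e)$ lies on a line $\ell_{e, x}$ whose slice $K \cap \ell_{e, x}$ has dimension exactly $s'$; hence $H_{s'}(K, e)$ is contained in the union of such slices. A standard box-counting product bound then gives $\Hd H_{s'}(K, e) \leq s' + \Hd \mathcal{H}(K, e, s')$, under the tacit understanding that sub-uniform distribution supplies the box-dimension control on the slices $K \cap \ell$ needed to invoke it.

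The crux is then to prove, under sub-uniform distribution, the refined Fu-Ren estimate $\Hd \mathcal{H}(K, e, s') \leq t - 2s'$ for $\mathcal{H}^1$-a.e.\ $e \in S^1$ and every $s' \in (\max\{t-1, 0\}, 1]$. (Fu-Ren only gives $\Hd \mathcal{H}(K, e, s') \leq 1 - s'$, which is weaker in this range since $s' > t - 1$.) Granted this refinement, $\Hd H_{s'}(K, e) \leq s' + (t - 2s') = t - s'$, and taking the supremum over countably many $s' \in [s, 1]$ yields $\Hd H(K, e, s) \leq t - s$, contradicting the counter-assumption.

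The main obstacle is precisely this refined Fu-Ren bound under sub-uniform distribution. The sharpness of Fu-Ren's $1 - s'$ in the general Borel case comes from sets that concentrate their heavy-slicing structure on a small sub-portion of themselves, which is exactly the multi-scale concentration phenomenon that Definition \ref{def:subUniform} is designed to rule out. The natural technical framework is that of $\{\Delta_j\}$-uniform sets \cite{OS23, 2023arXiv230808819R, Sh}, which (as noted in the remark after Definition \ref{def:subUniform}) inspired the notion of sub-uniform distribution. Concretely, I would discretise at a small scale $\delta$, pigeonhole across intermediate scales $\delta \leq R \leq 1$, and use the uniform bound $|K \cap Q|_\delta \leq C|K|_\delta/|K|_R$ to convert the spatial uniformity of $K$ into an improved Furstenberg-type incidence estimate along the lines of \cite{2023arXiv230808819R}. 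A secondary technical issue is the box-vs-Hausdorff dimension conversion in the slicing step of paragraph two, which should also be tractable from the scale-rigidity built into Definition \ref{def:subUniform}.
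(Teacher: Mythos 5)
Your proposal rests on two unproved pillars, and the first of them is, as the paper itself warns, unjustifiable in the generality you are assuming.

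The ``box-counting product bound'' $\dim_{\mathrm{H}} H_{s'}(K,e) \leq s' + \dim_{\mathrm{H}} \mathcal{H}(K,e,s')$ is exactly the ``Fubini for Hausdorff dimension'' reasoning flagged as false in the remark around \eqref{falseFubini}, and Theorem \ref{mainExampleIntro} is constructed precisely to show it cannot be made rigorous for general compact sets. Your escape hatch is the parenthetical claim that sub-uniform distribution ``supplies the box-dimension control on the slices $K \cap \ell$ needed to invoke it''. It does not. Definition \ref{def:subUniform} controls how evenly $K$ is spread across same-scale cubes; it says nothing about the multi-scale oscillation of a fixed trace $K \cap \ell$. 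Even for Ahlfors-regular $K$, a slice $K \cap \ell$ with $\dim_{\mathrm{H}}(K\cap\ell)=s'$ can have $\overline{\dim}_{\mathrm{B}}(K \cap \ell)$ strictly larger than $s'$ (only the crude bound $\overline{\dim}_{\mathrm{B}}(K\cap\ell)\le\min\{1,t\}$ is forced), so the covering required by the product bound is simply not available. This is a genuine gap. The second pillar, the refined estimate $\dim_{\mathrm{H}} \mathcal{H}(K,e,s') \leq t - 2s'$ for a.e.\ $e$, is the ``crux'' of your plan, yet no argument is given for it beyond a gesture at multi-scale pigeonholing; and in fact the paper never proves, or needs, a per-direction dimension bound for the line family $\mathcal{H}(K,e,s')$.

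The paper's proof, via Proposition \ref{prop:Ahlfors}, sidesteps both problems by working directly with the incidence set in $\R^4$ and integrating over directions rather than arguing pointwise. After partitioning the tube family into sub-uniform pieces (Lemma \ref{lemma1}) and applying the high-low incidence estimate (Proposition \ref{prop1}, packaged as Lemma \ref{lem:uniformP}), one has $\mathcal{H}^{2-\kappa+3\eta}_\infty(\mathcal{I}(\mathcal{K}_k,\mathcal{H}_k)) \lesssim \delta_k^{\eta/2}$ with $\kappa=s-t+1-2\eta$. Covering $\mathcal{I}(\mathcal{K}_k,\mathcal{H}_k)$ by dyadic $4$-cubes $R_i=p_i\otimes T_i$ and observing that $\mathcal{H}^1(\{e : R_i\in\mathcal{R}_e\})\lesssim\ell(R_i)$, one obtains
\begin{displaymath} \int_{S^1}\mathcal{H}^{1-\kappa+3\eta}_\infty(H_{e,k})\,d\mathcal{H}^1(e)\lesssim\mathcal{H}^{2-\kappa+3\eta}_\infty(\mathcal{I}(\mathcal{K}_k,\mathcal{H}_k))\lesssim\delta_k^{\eta/2}, \end{displaymath}
and summing over $k$ and sending $\eta\to0$ gives the a.e.\ bound on $\dim_{\mathrm{H}}\underline{H}(K,e,s)$ directly -- with no stratification by heaviness level, no Fubini for Hausdorff dimension, and no pointwise control of $\dim_{\mathrm{H}}\mathcal{H}(K,e,s')$. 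If you want to salvage your plan, both of your unproved steps should be replaced by this single integral-over-directions argument; as written, the strategy does not close.
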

In fact, we prove this estimate for a slightly larger ``heavy part'', where heaviness is measured using box-counting dimension; see Proposition \ref{prop:Ahlfors} for details.
\begin{remark}
	For $s>\frac{t}{2}$ we see from \eqref{eq:dim-estim} that $\dim_{\mathrm{H}}H(K,e,s)\le \frac{t}{2}<s$ for a.e. $e\in S^1$. Since a single $s$-heavy line satisfies $\dim_{\mathrm{H}}(K\cap \ell)\ge s$, it follows that for a.e. $e\in S^1$ there are no $s$-heavy lines parallel to $e$, i.e. $\mathcal{H}(K,e,s)=\varnothing$. Compare this to Proposition \ref{propPartial}, which stated that typically there are no $s$-heavy lines as soon as $s>\frac{1}{3}(2t-1)$. 
	
	Since $\frac{t}{2}\ge \frac{1}{3}(2t-1)$, the threshold of Proposition \ref{propPartial} is lower than the one we can obtain from Theorem \ref{thm:dim-estimate}. In other words, for $s> \frac{1}{3}(2t-1)$ the estimate from Proposition \ref{propPartial} (valid for general Borel sets) beats Theorem \ref{thm:dim-estimate}. On the other hand, for $t-1<s\le \frac{1}{3}(2t-1)$ the bound from Theorem \ref{thm:dim-estimate} beats Proposition \ref{propPartial}. Regardless, in this range we suspect that the bound \eqref{eq:dim-estim} is not sharp in the class of sub-uniformly distributed sets, let alone Ahlfors-regular sets. \end{remark}

\subsection{Further literature} If we restrict to sets with an underlying dynamical system, there are significantly stronger answers to the heavy lines problem than we presented above for Ahlfors-regular and sub-uniformly distributed sets. For instance, solving an old conjecture of Furstenberg \cite{MR0354562}, Shmerkin \cite{Sh} and Wu \cite{MR3961082} independently established the following. Assume that $K = A \times B$, where $A$ is $\times p$-invariant, $B$ is $\times q$-invariant, and $\log p/\log q \notin \mathbb{Q}$. Then $H(K,e) = \emptyset$ for all $e \in S^{1} \, \setminus \, \mathrm{span}\{(0,1),(1,0)\}$. A similar conclusion holds (without any exceptional directions) if $K \subset \R^{2}$ is a self-similar set where one of the generators contains an irrational rotation, see \cite[Theorem 1.6]{MR3961082}. For more recent work related to Furstenberg's intersection conjecture, see for example \cite{MR4093958,MR4565614,2023arXiv230317197A,MR4357267,MR4302170}

Finally, we refer the reader to the recent survey of Mattila \cite{mca28020049} on various slicing problems.

\section{Notation and preliminaries}\label{s:prelim}
\begin{notation}
	We write $f\lesssim g$ if there exists an absolute constant $C>0$ such that $f\le C g$. If $C$ depends on some parameter $\epsilon$, we will write $f\lesssim_{\epsilon} g$. In case $f\lesssim g\lesssim f$ we write $f\sim g$, while $f\sim_{\epsilon} g$ denotes $f\lesssim_{\epsilon} g\lesssim_{\epsilon} f$.
\end{notation}

\begin{notation}[Families of dyadic cubes] The notation $\mathcal{D}_{\delta}(\R^{d})$ will refer to all the standard dyadic cubes of $\R^{d}$. More generally, if $P \subset \R^{d}$ or $P \subset \mathcal{D}_{\delta}(\R^{d})$, and $\Delta \in 2^{-\N}$, we will use the notation 
\begin{displaymath} P_{\Delta} := \mathcal{D}_{\Delta}(P) := \{\mathbf{p} \in \mathcal{D}_{\Delta}(\R^{d}) : \mathbf{p} \cap P \neq \emptyset\}. \end{displaymath}
We will also write $|P|_\Delta\coloneqq|P_\Delta|$ to denote the (dyadic) $\Delta$-covering number of $P$.

In the special case $P = [0,1)^{2}$ we abbreviate
\begin{displaymath} \mathcal{D}_{\delta} := \mathcal{D}_{\delta}([0,1)^{2}). \end{displaymath}
If $\mathbf{p} \in \mathcal{D}_{\Delta}(\R^{d})$ and $\mathcal{P} \subset \mathcal{D}_{\delta}(\R^{d})$ with $\delta \leq \Delta \leq 1$, we will also (ab-)use the notation $\mathcal{P} \cap \mathbf{p} := \{p \in \mathcal{P} : p \subset \mathbf{p}\}$. \end{notation}

\begin{notation}\label{not1} Let $\delta \in 2^{-\N}$. The \emph{dyadic $\delta$-tubes} $\mathcal{T}^{\delta}$ are the images of elements of $\mathcal{D}_{\delta}$ under the "point-line duality map" 
\begin{displaymath} \mathbf{D}(a,b) := \{(x,y) \in \R^{2} : y = ax + b\}. \end{displaymath}
More precisely, each $T \in \mathcal{T}^{\delta}$ has the form $T = \cup\mathbf{D}(p)$ for some $p = p_{T} \in \mathcal{D}_{\delta}$. (It is a choice of normalisation, not a typo, that we only consider images of $\mathcal{D}_{\delta}$ instead of $\mathcal{D}_{\delta}(\R^{d})$.) For more information, see \cite[Section 2.3]{OS23}. If $\mathbf{T} \in \mathcal{T}^{\Delta}$, and $\mathcal{T} \subset \mathcal{T}^{\delta}$, we will (ab-)use the notation $\mathcal{T} \cap \mathbf{T} := \{T \in \mathcal{T} : T \subset \mathbf{T}\}$. 
\end{notation}

\begin{notation} For every dyadic cube $p \in \mathcal{D}_{\delta}$ we associate the disc $B_{p}$ which is concentric with $p$ and has diameter $10\delta$ (in particular $p \subset B_{p}$). Similarly, for every dyadic tube $T = \cup \mathbf{D}([a + \delta) \times [b + \delta)) \in \mathcal{T}^{\delta}$, $\delta \in 2^{-\N}$, we associate the "standard" tube $\mathbb{T}_{T}$ which has width $10\delta$ and whose core line is $\mathbf{D}(a,b)$. In particular, 
\begin{displaymath} T \cap B(1) \subset \mathbb{T}_{T}. \end{displaymath}
The definitions of $B_{p}$ and $\mathbb{T}_{T}$ have been posed so that the inclusion ordering of the discs and ordinary tubes respects the inclusion ordering of the dyadic objects: if $p,\mathbf{p} \in \bigcup_{\delta}\mathcal{D}_{\delta}$ with $p \subset \mathbf{p}$, then $B_{p} \subset B_{\mathbf{p}}$, and similarly if $T,\mathbf{T} \in \bigcup_{\delta}\mathcal{T}^{\delta}$ with $T \subset \mathbf{T}$, then $\mathbb{T}_{T}\cap B(2) \subset \mathbb{T}_{\mathbf{T}}\cap B(2)$. %\textcolor{red}{I did the computation, and one actually only has $\mathbb{T}_{T}\cap B(2) \subset \mathbb{T}_{\mathbf{T}}\cap B(2)$ and not $\mathbb{T}_{T} \subset \mathbb{T}_{\mathbf{T}}$, but this is good enough for our purposes. Is it worth writing down the computation?}

In the sequel, dyadic tubes are denoted with font $T,T'$ and standard tubes with font $\mathbb{T},\mathbb{T}'$.
\end{notation}

\begin{notation} Whenever $\mathcal{P} \subset \mathcal{D}_{\delta}$ and $\mathcal{T} \subset \mathcal{T}^{\delta}$, we will consider the set of \emph{incidences}
\begin{equation}\label{form4} \mathcal{I}(\mathcal{P},\mathcal{T}) := \{(p,T) \in \mathcal{P} \times \mathcal{T} : B_{p} \cap \mathbb{T}_{T} \neq \emptyset\}. \end{equation} 
Thus, the notion of incidences between dyadic squares and tubes is defined as a set of incidences between a family of balls and ordinary tubes. This will facilitate using existing estimates on incidences which generally concern balls and ordinary tubes. The following property is worth recording:
\begin{equation}\label{form11} p \subset \mathbf{p}, \, T \subset \mathbf{T} \text{ and } B_{p} \cap \mathbb{T}_{T} \cap B(2) \neq \emptyset \quad \Longrightarrow \quad B_{\mathbf{p}} \cap \mathbb{T}_{\mathbf{T}} \neq \emptyset. \end{equation}
This follows immediately from $B_{p} \subset B_{\mathbf{p}}$ and $\mathbb{T}_{T}\cap B(2) \subset \mathbb{T}_{\mathbf{T}}\cap B(2)$.

It will occasionally be fruitful to view the set of incidences $\mathcal{I}(\mathcal{P},\mathcal{T})$  as a subset of $\mathcal{D}_{\delta}(\R^{4})$ by identifying the pair $(p,T)$ with the $\delta$-cube 
\begin{displaymath} p \otimes T := p \times p_{T} \in \mathcal{D}_{\delta}(\R^{4}). \end{displaymath} 
If $\mathcal{H} \subset \mathcal{I}(\mathcal{P},\mathcal{T})$ and $\Delta \in 2^{-\N} \cap [\delta,1]$, the notation $|\mathcal{H}|_{\Delta}$ refers to the number of elements in $\mathcal{D}_{\Delta}(\R^{4})$ intersecting $\mathcal{H}$.

 \end{notation}

\begin{lemma}\label{lemma2} Let $\delta \in 2^{-\N} \cap (0,\tfrac{1}{100}]$, $\mathcal{P} \subset \mathcal{D}_{\delta}$ and $\mathcal{T} \subset \mathcal{T}^{\delta}$. Then,
\begin{displaymath} (\mathcal{I}(\mathcal{P},\mathcal{T}))_{\Delta} \subset \mathcal{I}(\mathcal{P}_{\Delta},\mathcal{T}_{\Delta}), \qquad \Delta \in 2^{-\N} \cap [\delta,1]. \end{displaymath}
\end{lemma}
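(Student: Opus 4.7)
\medskip

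The plan is to unpack the definitions and reduce the statement to the implication recorded in \eqref{form11}. Let $\mathbf{q} \in (\mathcal{I}(\mathcal{P},\mathcal{T}))_{\Delta}$. Viewing $\mathcal{I}(\mathcal{P},\mathcal{T})$ as a subset of $\mathcal{D}_{\delta}(\R^{4})$ via the identification $(p,T) \leftrightarrow p \otimes T = p \times p_{T}$, by definition $\mathbf{q}$ is a $\Delta$-cube in $\R^{4}$ intersecting some $p \otimes T$ with $(p,T) \in \mathcal{I}(\mathcal{P},\mathcal{T})$. Since $\delta \leq \Delta$ and both cubes are dyadic, this forces $p \otimes T \subset \mathbf{q}$. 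Write $\mathbf{q} = \mathbf{p} \times p_{\mathbf{T}}$ for the unique $\mathbf{p} \in \mathcal{D}_{\Delta}$ and $\mathbf{T} \in \mathcal{T}^{\Delta}$ with this product shape; then $p \subset \mathbf{p}$ and $p_{T} \subset p_{\mathbf{T}}$, and by the definition of the duality map the latter is equivalent to $T \subset \mathbf{T}$.

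Next, I observe that $\mathbf{p} \in \mathcal{P}_{\Delta}$ and $\mathbf{T} \in \mathcal{T}_{\Delta}$ follow automatically from $p \in \mathcal{P}$, $p \subset \mathbf{p}$ (resp. $T \in \mathcal{T}$, $T \subset \mathbf{T}$). Thus to prove $\mathbf{q} \in \mathcal{I}(\mathcal{P}_{\Delta},\mathcal{T}_{\Delta})$ it remains only to check $B_{\mathbf{p}} \cap \mathbb{T}_{\mathbf{T}} \neq \emptyset$. This is exactly the conclusion produced by \eqref{form11}, provided I can verify its hypothesis $B_{p} \cap \mathbb{T}_{T} \cap B(2) \neq \emptyset$.

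The hypothesis $(p,T) \in \mathcal{I}(\mathcal{P},\mathcal{T})$ gives $B_{p} \cap \mathbb{T}_{T} \neq \emptyset$, so the only remaining point is to see that this intersection already lies inside $B(2)$. This is where the size restriction $\delta \leq \tfrac{1}{100}$ enters: since $p \in \mathcal{D}_{\delta} = \mathcal{D}_{\delta}([0,1)^{2})$, the concentric disc $B_{p}$ has diameter $10\delta \leq \tfrac{1}{10}$ and is centred in $[0,1)^{2}$, hence $B_{p} \subset B(2)$. Therefore $B_{p} \cap \mathbb{T}_{T} = B_{p} \cap \mathbb{T}_{T} \cap B(2) \neq \emptyset$, and \eqref{form11} applies to yield $B_{\mathbf{p}} \cap \mathbb{T}_{\mathbf{T}} \neq \emptyset$.

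No serious obstacle is expected: the argument is really just a bookkeeping exercise in the definitions, with the one genuine (if trivial) point being the containment $B_{p} \subset B(2)$ that upgrades the incidence relation $B_{p} \cap \mathbb{T}_{T} \neq \emptyset$ into the $B(2)$-restricted version needed to invoke \eqref{form11}.
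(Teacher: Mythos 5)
Your argument is the same as the paper's: unwind the definition of $(\mathcal{I}(\mathcal{P},\mathcal{T}))_{\Delta}$, note that the hypothesis $\delta \leq \tfrac{1}{100}$ forces $B_{p} \subset B(2)$, and then invoke \eqref{form11} to pass the incidence to the $\Delta$-parents. The only difference is that you spell out a couple of the bookkeeping steps (the product form of $\mathbf{q}$ and the duality between $p_T \subset p_{\mathbf{T}}$ and $T \subset \mathbf{T}$) that the paper leaves implicit; the substance is identical and correct.
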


\begin{proof} Let $Q \in (\mathcal{I}(\mathcal{P},\mathcal{T}))_{\Delta} \subset \mathcal{D}_{\Delta}(\R^{4})$. Thus $Q$ contains at least one $\delta$-cube $p \otimes T$ with $p \in \mathcal{P}$, $T \in \mathcal{T}$, and $B_{p} \cap \mathbb{T}_{T} \neq \emptyset$. The hypothesis $\delta \leq \tfrac{1}{100}$ ensures that $B_{p} \subset B(2)$, and therefore $B_{p} \cap \mathbb{T}_{T} \cap B(2) \neq \emptyset$. Now the dyadic $\Delta$-parents of $p$ and $T$ also satisfy $B_{\mathbf{p}} \cap \mathbb{T}_{\mathbf{T}} \neq \emptyset$, as recorded in \eqref{form11}, so $\mathbf{p} \otimes \mathbf{T} \in \mathcal{I}(\mathcal{P}_{\Delta},\mathcal{T}_{\Delta})$. But since $\mathbf{p} \otimes \mathbf{T} \in \mathcal{D}_{\Delta}(\R^{4})$ is a dyadic $\Delta$-cube containing $p \otimes T$, we must have $Q = \mathbf{p} \otimes \mathbf{T} \in \mathcal{I}(\mathcal{P}_{\Delta},\mathcal{T}_{\Delta})$.
\end{proof}

\begin{remark} The converse inclusion $\mathcal{I}(\mathcal{P}_{\Delta},\mathcal{T}_{\Delta}) \subset (\mathcal{I}(\mathcal{P},\mathcal{T}))_{\Delta}$ is false in general. It is possible that $B_{\mathbf{p}} \cap \mathbb{T}_{\mathbf{T}} \neq \emptyset$ for some $\mathbf{p} \in \mathcal{P}_{\Delta}$ and $\mathbf{T} \in \mathcal{T}_{\Delta}$, and thus $\mathbf{p} \otimes \mathbf{T} \in \mathcal{I}(\mathcal{P}_{\Delta},\mathcal{T}_{\Delta})$, but there exists no pair $p \in \mathcal{P} \cap \mathbf{p}$ and $T \in \mathcal{T} \cap \mathbf{T}$ such that $B_{p} \cap \mathbb{T}_{T} \neq \emptyset$. In this case $\mathbf{p} \otimes \mathbf{T} \notin (\mathcal{I}(\mathcal{P},\mathcal{T}))_{\Delta}$. \end{remark}

\subsection{The high-low method} A main technical tool for us will be the \emph{high-low method}, pioneered by Guth, Solomon, and Wang in \cite{GSW}. We will employ the technique in the form formalised by Bradshaw \cite[Proposition 2.1]{2020arXiv200911765B}:
\begin{proposition}\label{prop:highLow} Fix $\epsilon,\delta \in (0,\tfrac{1}{2}]$. Let $\mathcal{B}$ be a family of $\delta$-balls contained in $B(1) \subset \R^{2}$, and let $\mathcal{T}$ be a family of (ordinary) $\delta$-tubes. Fix $A \in [\delta^{-\epsilon},\delta^{-1}]$. Then,
\begin{equation}\label{form2} |\mathcal{I}(\mathcal{B},\mathcal{T})| \lesssim_{\epsilon} \sqrt{A\delta^{-1}|\mathcal{B}||\mathcal{T}|} + \delta^{-\epsilon}A^{-1}|\mathcal{I}(\mathcal{B}^{A},\mathcal{T}^{A})|, \end{equation}
where $\mathcal{B}^{A} = \{B^{A} : B \in \mathcal{B}\}$ and $\mathcal{T}^{A} = \{\mathbb{T}^{A} : \mathbb{T} \in \mathcal{T}\}$ consist of the $A$-thickenings of the balls and tubes in $\mathcal{B}$ and $\mathcal{T}$, respectively.
\end{proposition}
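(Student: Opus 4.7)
The plan is to apply the high-low method of Guth, Solomon, and Wang \cite{GSW}. Introduce $f := \sum_{B \in \mathcal{B}} \chi_B$ and $g := \sum_{\mathbb{T} \in \mathcal{T}} \chi_{\mathbb{T}}$, and observe that each incident pair contributes $|B \cap \mathbb{T}| \gtrsim \delta^{2}$ to $\int fg$, so $\delta^{2}|\mathcal{I}(\mathcal{B},\mathcal{T})| \lesssim \int fg$ and it suffices to bound the integral. Fix a Schwartz function $\phi$ with $\phi(0) = 1$ and $\hat\phi$ supported on the unit ball, set $R := A\delta$ and $\phi_R(x) := R^{-2}\phi(x/R)$, and decompose $g = g_{\mathrm{lo}} + g_{\mathrm{hi}}$ with $g_{\mathrm{lo}} := g * \phi_R$ (so $\widehat{g_{\mathrm{lo}}}$ lives in $B(0,1/R)$). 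Split $\int fg = \int f g_{\mathrm{lo}} + \int f g_{\mathrm{hi}}$ and handle the two pieces separately.

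For the \emph{low} piece: $\chi_{\mathbb{T}} * \phi_R$ has $L^\infty$-norm of order $A^{-1}$ and is essentially supported on the $A$-thickening $\mathbb{T}^A$ (with rapidly-decaying Schwartz tails). Hence each pair $(B, \mathbb{T})$ with $B \cap \mathbb{T}^A \neq \emptyset$ contributes $\lesssim A^{-1}\delta^{2}$ to $\int fg_{\mathrm{lo}}$, yielding
\begin{displaymath}
\int fg_{\mathrm{lo}} \lesssim_\epsilon \delta^{-\epsilon} A^{-1}\delta^{2} |\mathcal{I}(\mathcal{B}, \mathcal{T}^A)| \leq \delta^{-\epsilon} A^{-1}\delta^{2} |\mathcal{I}(\mathcal{B}^A, \mathcal{T}^A)|,
\end{displaymath}
the $\delta^{-\epsilon}$ factor absorbing the Schwartz-tail contribution. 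After dividing by $\delta^{2}$, this is precisely the second term of \eqref{form2}.

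For the \emph{high} piece, Cauchy-Schwarz gives $\int fg_{\mathrm{hi}} \leq \|f\|_{2} \|g_{\mathrm{hi}}\|_{2}$; bounded overlap of the $\delta$-balls in $\mathcal{B}$ gives $\|f\|_{2} \lesssim \delta |\mathcal{B}|^{1/2}$. The crux is the $L^{2}$ bound
\begin{displaymath}
\|g_{\mathrm{hi}}\|_{2}^{2} \lesssim_\epsilon \delta^{-\epsilon}\, A\delta |\mathcal{T}|,
\end{displaymath}
which together with the above produces the first term of \eqref{form2}. I would prove it via Plancherel, $\|g_{\mathrm{hi}}\|_{2}^{2} = \int_{|\xi| > 1/R} |\hat g(\xi)|^{2} \, d\xi$, together with the observation that each $\hat\chi_{\mathbb{T}}$ is essentially concentrated on a ``dual strip'' of dimensions $1 \times \delta^{-1}$ perpendicular to $\mathbb{T}$. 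For $|\xi| \geq 1/R = 1/(A\delta)$, the constraint that $\xi$ lies in $\mathbb{T}$'s dual strip pins $\mathbb{T}$'s direction to within $\lesssim 1/|\xi| \leq A\delta$ of $\xi^\perp$, so only $\lesssim A$ direction-classes contribute at each $\xi$; applying Cauchy-Schwarz in $\xi$ over these classes and then summing via $\sum_{\mathbb{T}} \|\chi_{\mathbb{T}}\|_{2}^{2} \sim \delta |\mathcal{T}|$ gives the bound. The main obstacle is precisely this $L^{2}$ estimate: the trivial bound $\|g_{\mathrm{hi}}\|_{2} \leq \|g\|_{2}$ can be as weak as $\delta^{1/2}|\mathcal{T}|$ (e.g.\ when many tubes are nearly concurrent), so the Fourier-analytic refinement — leveraging both the frequency cutoff at scale $1/R$ and the direction spread of the tubes — is essential.
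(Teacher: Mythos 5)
The paper does not prove this proposition itself; it is imported verbatim from Bradshaw \cite[Proposition 2.1]{2020arXiv200911765B}, so there is no paper-internal argument to compare yours against. Your proposal follows the standard high-low template of Guth--Solomon--Wang (which is also what Bradshaw uses), so the overall approach is the right one: pass to $\int fg$, split $g = g_{\mathrm{lo}} + g_{\mathrm{hi}}$ at frequency $\sim 1/(A\delta)$, bound the low piece pointwise and the high piece by Plancherel plus Cauchy--Schwarz over angular sectors.

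That said, the proposal as written is a sketch, and the one place where the real technical content of the $\delta^{-\epsilon}$ factor lives is exactly the place you wave past. Concretely: $\chi_{\mathbb{T}} * \phi_R$ is \emph{not} supported on $\mathbb{T}^A$; it only decays rapidly off it. Consequently $\int f g_{\mathrm{lo}}$ receives contributions from every pair $(B,\mathbb{T})$, not just the pairs counted by $|\mathcal{I}(\mathcal{B}^A,\mathcal{T}^A)|$. Asserting that ``the $\delta^{-\epsilon}$ factor absorbs the Schwartz-tail contribution'' is a placeholder, not an argument: the tail sum $\sum_k 2^{-kN}\,|\{(B,\mathbb{T}) : \operatorname{dist}(B,\mathbb{T}) \sim 2^k A\delta\}|$ involves incidence counts at coarser thickenings $2^kA$, and these are not controlled by $|\mathcal{I}(\mathcal{B}^A,\mathcal{T}^A)|$ in general (the trivial bound $|\mathcal{B}||\mathcal{T}|$ does not suffice when $A$ is near $\delta^{-\epsilon}$). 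Making this step rigorous requires a genuine device --- e.g.\ a dyadic decomposition in the distance parameter combined with the fact that the low term is only one of several scales, or a slight inflation of the thickening parameter so that the tails beyond the inflated region are negligible while the inflated incidence count can be compared back. This is where the loss actually comes from, and it is the part you should write out.

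Two smaller points. First, you want $\int\phi = 1$ (equivalently $\hat\phi(0)=1$), not $\phi(0)=1$, so that $1 - \hat\phi(R\,\cdot)$ genuinely annihilates low frequencies; otherwise $g_{\mathrm{hi}}$ retains a zero-frequency component. Second, the high-part $L^2$ bound should be $\|g_{\mathrm{hi}}\|_2^2 \lesssim A\delta|\mathcal{T}|$ with no $\delta^{-\epsilon}$: the first term of \eqref{form2} has no such loss, and a $\delta^{-\epsilon}$ there would produce an extra $\delta^{-\epsilon/2}$ in the final estimate (it cannot be absorbed into $\lesssim_\epsilon$ since it is $\delta$-dependent). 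Finally, ``$|B\cap\mathbb{T}|\gtrsim\delta^2$ for incident pairs'' fails when they barely touch; the usual fix is to work with slightly enlarged objects, which is harmless but should be said. None of these change the plan, but the low-piece tail control is a real gap and is precisely the work Bradshaw's proof does.
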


This proposition is known as the "high-low method", because its proof shows that the first term in \eqref{form2} bounds the left hand side in a "high frequency dominated case", whereas the second term bounds the left hand side in a "low frequency dominated" case. 

\begin{remark} We make a few clarifying remarks about Proposition \ref{prop:highLow}. First, as can be expected, $\mathcal{I}(\mathcal{B},\mathcal{T}) = \{(B,\mathbb{T}) \in \mathcal{B} \times \mathcal{T} : B \cap \mathbb{T} \neq \emptyset\}$. The families $\mathcal{B}^{A}$ and $\mathcal{T}^{A}$ consist of $(A\delta)$-balls and ordinary $(A\delta)$-tubes, but -- in a typical application -- these families will be far from $(A\delta)$-separated. The right intuition is that the families $\mathcal{B}$ and $\mathcal{T}$ typically consist of $\delta$-separated objects, but when the objects in these families are thickened by $A$, they tend to have heavy overlap.
\end{remark}

\section{A new incidence estimate}
In order to prove Theorem \ref{thm:dim-estimate} we establish a new incidence estimate. It is valid for discretized variants of sub-uniformly distributed sets, as defined below.
\begin{definition}
	Let $\mathbf{C} > 0$ and let $\{\Delta_n,\dots,\Delta_0\}\subset 2^{-\mathbb{N}}$ be an increasing sequence of scales. Suppose that $\mathcal{P}$ is either a subset of $B(0,1)$ or a subfamily of $\mathcal{D}_{\delta}([0,1)^2)$ with $\delta\le \Delta_n$. We say that $\calP$ is \emph{sub-$\{\Delta_j\}_{j=0}^n$-uniform with constant $\mathbf{C}$} if for every $j \in \{0,\dots, n-1\}$
	\begin{equation*}
	|\mathcal{P}|_{\Delta_{j}} \cdot |\calP \cap \mathbf{p}|_{\Delta_{j+1}}\le \mathbf{C} |\calP|_{\Delta_{j+1}}, \qquad \mathbf{p} \in \mathcal{D}_{\Delta_{j}}(\mathcal{P}).
	\end{equation*}
	%If the constant $C$ is irrelevant, we will not mention it.
\end{definition}

%We say that $\mathcal{P}$ is \emph{$\{\Delta_j\}_{j=0}^n$-uniform with constant $C$} if for every $j=0,\dots, n-1$ and every $\mathbf{p}\in \calP_{\Delta_j}$ \textcolor{cyan}{Is the notion of uniform sets needed for something?}
	%\begin{equation*}
	%C^{-1} \frac{|\calP_{\Delta_{j+1}}|}{|\calP_{\Delta_{j}}|}\le |\mathbf{p}\cap \calP_{\Delta_{j+1}}|\le C \frac{|\calP_{\Delta_{j+1}}|}{|\calP_{\Delta_{j}}|}.
	%\end{equation*}

\begin{proposition}\label{prop1} For every $\mathbf{C} > 0$, $\kappa \in (0,1)$ and $\eta \in (0,\tfrac{\kappa}{2}]$ the following holds for all $\delta \in 2^{-\N}$ small enough such that also $S := \delta^{-\eta} \in 2^{-\N}$.  Consider the scale sequence $\{\delta,S\delta,S^{2}\delta,\ldots,1\} = \{\Delta_{n},\Delta_{n - 1},\ldots,\Delta_{0}\}$. Let $\mathcal{T} \subset \mathcal{T}^{\delta}$ and $\mathcal{P} \subset \mathcal{D}_{\delta}$ be sub-$\{\Delta_j\}_{j=0}^n$-uniform with constant $\mathbf{C}$. Assume that
\begin{equation*}
|\mathcal{I}(\mathcal{P},\mathcal{T})| \geq \delta^{1 - \kappa}|\mathcal{P}||\mathcal{T}|.
\end{equation*}
Then, there exists a scale $\Delta \in \{\Delta_{1},\ldots,\Delta_{n}\} \subset [\delta,\delta^{\eta}]$ such that
\begin{equation}\label{form3} |\mathcal{I}(\mathcal{P}_{\Delta},\mathcal{T}_{\Delta})| \lesssim_{\mathbf{C},\eta} \delta^{\kappa-2\eta} \Delta^{-2}. \end{equation}
Here $\mathcal{P}_{\Delta} := \mathcal{D}_{\Delta}(\mathcal{P})$ and $\mathcal{T}_{\Delta} = \mathcal{T}^{\Delta}(\mathcal{T})$ are the dyadic $\Delta$-covers of $\mathcal{P}$ and $\mathcal{T}$, respectively.
\end{proposition}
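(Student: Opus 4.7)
The plan is to apply the high-low method (Proposition \ref{prop:highLow}) iteratively across the scale sequence $\Delta_n,\ldots,\Delta_1$, combined with the sub-uniformity hypothesis on $\mathcal{P}$ and $\mathcal{T}$. Write $I_j := |\mathcal{I}(\mathcal{P}_{\Delta_j},\mathcal{T}_{\Delta_j})|$, $M_j := |\mathcal{P}_{\Delta_j}||\mathcal{T}_{\Delta_j}|$ and $\rho_j := I_j/M_j$. I argue by contradiction, assuming $I_j > C_0 \delta^{\kappa - 2\eta}\Delta_j^{-2}$ for all $j \in \{1,\ldots,n\}$, with $C_0 = C_0(\mathbf{C},\eta)$ to be chosen.

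At each scale $\Delta_j$, I apply Proposition \ref{prop:highLow} with ``$\delta$''$=\Delta_j$, $A = S$, and $\epsilon = \eta^{2}$. Since $S\Delta_j = \Delta_{j-1}$, each $S$-thickened $\Delta_j$-ball (resp.\ tube) lies in a constant dilation of its $\Delta_{j-1}$-parent ball (resp.\ tube), compatibly with \eqref{form11}. Grouping the thickened incidences $|\mathcal{I}(\mathcal{P}_{\Delta_j}^S, \mathcal{T}_{\Delta_j}^S)|$ by $\Delta_{j-1}$-parent pair and using sub-uniformity to bound the child multiplicity yields
\begin{equation*}
|\mathcal{I}(\mathcal{P}_{\Delta_j}^S, \mathcal{T}_{\Delta_j}^S)| \lesssim \mathbf{C}^{2}\,\frac{M_j}{M_{j-1}}\,I_{j-1}.
\end{equation*}
After substituting and normalising by $M_j$, the recursion takes the form
$\rho_j \leq C_\eta\sqrt{S\Delta_j^{-1}/M_j} + C_\eta \mathbf{C}^{2}\,\Delta_j^{-\eta^{2}}S^{-1}\rho_{j-1}$, producing at each scale the dichotomy \textbf{Case A}, $\rho_j \leq 2C_\eta\sqrt{S\Delta_j^{-1}/M_j}$, or \textbf{Case B}, $\rho_{j-1} \gtrsim_{\mathbf{C},\eta}\delta^{\eta^{2}-\eta}\rho_j$.

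Suppose Case B holds at every scale $j = n, n-1, \ldots, 1$. Then the $n = 1/\eta$ multiplicative gains of $\delta^{-\eta}$ dominate the $\delta^{\eta^{2}}$ losses and the $(2C_\eta\mathbf{C}^{2})^{1/\eta}$ accumulated constant, giving $\rho_0 \gtrsim_{\mathbf{C},\eta} \delta^{\eta-\kappa}$. Since $\eta \leq \kappa/2$ and $\rho_0 \leq 1$, this is contradictory for $\delta$ small enough. Hence Case A must occur at some largest index $j^* \in \{1,\ldots,n\}$. At $j^*$, the propagated lower bound on $\rho_{j^*}$ from Case B at the finer scales combines with the Case A bound $I_{j^*} \leq 4C_\eta^{2}\,S\Delta_{j^*}^{-1}/\rho_{j^*}$; after substituting $\Delta_{j^*} = \delta^{1-(n-j^*)\eta}$ and using $n\eta = 1$, the $(n-j^*)\eta$-exponents telescope and yield $I_{j^*} \lesssim_{\mathbf{C},\eta} \delta^{\kappa - 2\eta}\Delta_{j^*}^{-2}$, contradicting the standing assumption once $C_0$ is chosen large.

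The step I expect to need the most care is the reduction $|\mathcal{I}(\mathcal{P}_{\Delta_j}^{S},\mathcal{T}_{\Delta_j}^{S})| \lesssim \mathbf{C}^{2}(M_j/M_{j-1})I_{j-1}$, which combines the geometric compatibility of $S$-thickening with the dyadic parent scale (in the spirit of Lemma \ref{lemma2}) with a simultaneous application of sub-uniformity to both $\mathcal{P}$ and $\mathcal{T}$. The rest of the argument is bookkeeping; the numerology is tight precisely in the regime $\eta \leq \kappa/2$, where the per-step gain $\delta^{-\eta}$ from $S^{-1}$ in the low term outpaces the per-step loss $\delta^{\eta^{2}}$ from $\Delta_j^{-\eta^{2}}$.
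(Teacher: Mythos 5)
Your proposal is correct and follows essentially the same route as the paper: iterate Bradshaw's high--low inequality down the scale ladder $\Delta_n,\ldots,\Delta_1$, use sub-uniformity to dominate the coarsened "low" incidence count by the parent-scale incidence count, show the low case cannot persist across all $n=1/\eta$ steps, and extract \eqref{form3} from the high case at the smallest high scale. The only cosmetic difference is that the paper tracks the normalized quotient $\iota(\Delta)=|\mathcal{I}(\mathcal{P}_\Delta,\mathcal{T}_\Delta)|/(\Delta|\mathcal{P}_\Delta||\mathcal{T}_\Delta|)$ while you track $\rho_j=I_j/M_j=\Delta_j\,\iota(\Delta_j)$; these are trivially interchangeable and your recursion and telescoping numerology agree with the paper's. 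One technicality worth pinning down: the paper takes the thickening parameter $A=cS$ with a small absolute $c>0$, not $A=S$, so that the triangle-inequality argument showing $AB_p\cap A\mathbb{T}_T\neq\emptyset\Rightarrow B_{\mathbf{p}}\cap\mathbb{T}_{\mathbf{T}}\neq\emptyset$ for the $\Delta_{j-1}$-parents actually lands inside the fixed ($\times 10$) dilation built into the definitions of $B_{\mathbf{p}}$ and $\mathbb{T}_{\mathbf{T}}$; with $A=S$ exactly, the thickened objects live at the same scale as the parents and the constants overshoot. Your remark that the $S$-thickened objects lie in "a constant dilation" of the parents is the right observation, but to close the step you should either take $A=cS$ as the paper does or replace $B_{\mathbf{p}},\mathbb{T}_{\mathbf{T}}$ by a suitably larger fixed dilation; either way this costs only an absolute constant and does not affect the numerology.
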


%\begin{remark} In fact, the proof works for $\eta,\kappa \in (0,1)$ independent of each other, but since it is possible that $\Delta = \delta^{\eta}$, the information in \eqref{form3} is only useful when $\eta \leq \kappa$. \end{remark}

\begin{proof} The main idea is to apply the high-low method along the scale sequence $\{\Delta_{j}\}_{j = 0}^{n}$ (in increasing order) and observe that that the \emph{incidence quotient}
\begin{displaymath} \iota(\Delta) := \frac{|\mathcal{I}(\mathcal{P}_{\Delta},\mathcal{T}_{\Delta})|}{\Delta|\mathcal{P}_{\Delta}||\mathcal{T}_{\Delta}|}. \end{displaymath}
is (roughly) non-decreasing as long as the low "low case" of Proposition \ref{prop:highLow} occurs. Eventually, this will show that the high case must occur -- indeed well before scale "$1$" -- and this is the scale at which \eqref{form3} holds. 

\subsubsection*{Monotonicity of incidence quotients} Fix $\Delta = \Delta_{j}$ with $j \in \{1,\ldots,n\}$, in particular $\Delta \in [\delta,\delta^{\eta}]$. Let $c > 0$ be an absolute constant to be determined a little later. We apply Proposition \ref{prop:highLow} at scale $\Delta$ with parameters $\epsilon=\eta^{2}$ and $A := cS$ and to the families $\{B_{p} : p \in \mathcal{P}_{\Delta}\}$ and $\{\mathbb{T}_{T} : T \in \mathcal{T}_{\Delta}\}$ (recall that $\mathcal{I}(\mathcal{P}_{\Delta},\mathcal{T}_{\Delta})$ is defined using these balls and ordinary tubes). The proposition requires $A \in [\Delta^{-\eta^{2}},\Delta^{-1}]$, and this is satisfied since $A \leq S = \delta^{-\eta} \leq \Delta^{-1}$.

The conclusion is that, for a suitable constant $C = C(\eta) > 0$, either
\begin{equation}\label{form5} |\mathcal{I}(\mathcal{P}_{\Delta},\mathcal{T}_{\Delta})| \leq C\sqrt{A\Delta^{-1}|\mathcal{P}_{\Delta}||\mathcal{T}_{\Delta}|} \end{equation}
or
\begin{equation}\label{form6} |\mathcal{I}(\mathcal{P}_{\Delta},\mathcal{T}_{\Delta})| \leq C\Delta^{-\eta^{2}}A^{-1}|\mathcal{I}(\mathcal{P}_{\Delta}^{A},\mathcal{T}_{\Delta}^{A})|.  \end{equation}
Here $\mathcal{P}_{\Delta}^{A}$ and $\mathcal{T}_{\Delta}^{A}$ refer to the families of ordinary balls (resp. tubes) of diameter (resp. width) $10A\Delta$ which are the $A$-times enlargements of the families $\{B_{p} : p \in \mathcal{P}_{\Delta}\}$ and $\{\mathbb{T}_{T} : T \in \mathcal{T}_{\Delta}\}$.

If \eqref{form5} holds, we say that \emph{the high case occurs at scale $\Delta$}. If \eqref{form5} fails (therefore \eqref{form6} holds), we say that \emph{the low case occurs at scale $\Delta$}. (We point out that these definitions depend on our choice of $\eta$.) We claim that if the low case occurs at scale $\Delta$, then 
\begin{equation}\label{form9} \iota(\Delta) \lesssim_{\mathbf{C},\eta} \Delta^{-\eta^{2}}\iota(S\Delta). \end{equation}

Recall that both $\mathcal{P}$ and $\mathcal{T}$ are assumed to be sub-$\{\Delta_{j}\}_{j = 0}^{n}$-uniform. Given $\Delta\in\{\Delta_n,\dots,\Delta_{1}\}$ let $M=|\mathcal{P}_{\Delta}|/|\mathcal{P}_{S\Delta}|$ and $N=|\mathcal{T}_{\Delta}|/|\mathcal{T}_{S\Delta}|$, so that
\begin{equation}\label{form8} 
	|\mathcal{P}_{\Delta} \cap \mathbf{p}| \leq \mathbf{C}M \quad \text{and} \quad |\mathcal{T}_{\Delta} \cap \mathbf{T}| \leq \mathbf{C}N 
\end{equation}
for all $\mathbf{p} \in \mathcal{D}_{S\Delta}(\mathcal{P})$ and $\mathbf{T} \in \mathcal{D}_{S\Delta}(\mathcal{T})$. We now claim that
\begin{equation}\label{form7} 
	|\mathcal{I}(\mathcal{P}^{A}_{\Delta},\mathcal{T}^{A}_{\Delta})| \leq \mathbf{C}^{2}MN \cdot |\mathcal{I}(\mathcal{P}_{S\Delta},\mathcal{T}_{S\Delta})|. 
\end{equation} 
To see this, let $(p,T) \in \mathcal{P}_{\Delta} \times \mathcal{T}_{\Delta}$ be such that $AB_{p} \cap A\mathbb{T}_{T} \neq \emptyset$ (these are the pairs we are counting on the left hand side of \eqref{form7}). Let $\mathbf{p} \in \mathcal{D}_{S\Delta}(\mathcal{P})$ and $\mathbf{T} \in \mathcal{T}_{S\Delta}(\mathcal{T})$ be the dyadic parents of $p$ and $T$, respectively. We claim that $B_{\mathbf{p}} \cap \mathbb{T}_{\mathbf{T}} \neq \emptyset$, or equivalently $(\mathbf{p},\mathbf{T}) \in \mathcal{I}(\mathcal{P}_{S\Delta},\mathcal{T}_{S\Delta})$. Once proven, together with \eqref{form8} this implies \eqref{form7}.
%, because then we know that $(p,T) \mapsto (\mathbf{p},\mathbf{T})$ defines a $MN$-to-$1$ map from $\mathcal{I}(\mathcal{P}^{A}_{\Delta},\mathcal{T}^{A}_{\Delta})$ to $\mathcal{I}(\mathcal{P}_{S\Delta},\mathcal{T}_{S\Delta})$.

Proving that $B_{\mathbf{p}} \cap \mathbb{T}_{\mathbf{T}} \neq \emptyset$ is an exercise in using the triangle inequality: indeed $AB_{p} \cap A\mathbb{T}_{T} \neq \emptyset$ implies that $\dist(p,T) \lesssim A\Delta = cS\Delta$, and in particular $\dist(p,T) \leq S\Delta$ if $c > 0$ was chosen small enough. Recalling that $B_{\mathbf{p}}$ is a disc of diameter $10S\Delta$ around $\mathbf{p}$ and $\mathbb{T}_{T}$ is an ordinary tube of width $10S\Delta$ around $\mathbf{T}$, it follows that $B_{\mathbf{p}} \cap T_{\mathbf{T}} \neq \emptyset$.

Now, combining \eqref{form6}+\eqref{form7}, and the fact that $M/|\calP_{\Delta}| = 1/|\calP_{S\Delta}|$ and $N/|\calT_{\Delta}| = 1/|\calT_{S\Delta}|$, we may deduce that
\begin{align*} \iota(\Delta) = \frac{|\mathcal{I}(\mathcal{P}_{\Delta},\mathcal{T}_{\Delta})|}{\Delta|\mathcal{P}_{\Delta}||\mathcal{T}_{\Delta}|} & \stackrel{\eqref{form6}+\eqref{form7}}{\lesssim_{\mathbf{C},\eta}} \frac{\Delta^{-\eta^{2}}S^{-1}MN \cdot |\mathcal{I}(\mathcal{P}_{S\Delta},\mathcal{T}_{S\Delta})|}{\Delta|\mathcal{P}_{\Delta}||\mathcal{T}_{\Delta}|}\\
& = \Delta^{-\eta^{2}} \cdot \frac{|\mathcal{I}(\mathcal{P}_{S\Delta},\mathcal{T}_{S\Delta})|}{(S\Delta)|\mathcal{P}_{S\Delta}||\mathcal{T}_{S\Delta}|} = \Delta^{-\eta^{2}}\iota(S\Delta). \end{align*} 
This completes the proof of \eqref{form9}.

\subsubsection*{Conclusion of the proof} By hypothesis $\iota(\delta) \geq \delta^{-\kappa}$, and on the other hand trivially $\iota(1) \leq 1$. From these facts, and the monotonicity of the incidence quotients \eqref{form9} in the "low" cases, we may easily infer that the "high" case must occur at some scale $\Delta = \Delta_{j}$ with $j \in \{1,\ldots,n\}$. Indeed, if this were not the case, we could "chain" the inequalities \eqref{form9} for all scales $\Delta = \Delta_{j}$ with $j \in \{1,\ldots,n\}$ to deduce, for a suitable constant $\mathbf{c} = \mathbf{c}(\mathbf{C},\eta) > 0$, that
\begin{equation}\label{form10}
	1 \geq \iota(\Delta_{0})\ge (\mathbf{c}\delta^{\eta^{2}})^{n-1}\iota(\Delta_{n}) \geq \mathbf{c}^{1/\eta} \cdot \delta^{\eta^{2}/\eta -\kappa} \geq \mathbf{c}^{1/\eta} \cdot \delta^{-\kappa/2},
\end{equation}
recalling that $\eta \leq \kappa/2$. Provided that $\delta>0$ is small enough, we reach a contradiction. Therefore, the high case must occur for some index $j \in \{1,\ldots,n\}$, and we choose the largest index at which this happens: thus $\Delta := \Delta_{j}$ is the smallest "high" scale. Since the low case occurred at all larger indices, repeating the argument at \eqref{form10} shows that
\begin{equation}\label{eq:iotabig}
\frac{|\mathcal{I}(\mathcal{P}_{\Delta},\mathcal{T}_{\Delta})|}{\Delta |\mathcal{P}_{\Delta}||\mathcal{T}_{\Delta}|} = \iota(\Delta) \gtrsim_{\mathbf{C},\eta} \delta^{\eta -\kappa}
\end{equation}
On the other hand, since the "high" case occurs at scale $\Delta$, we have \eqref{form5} at our disposal:
\begin{displaymath} |\mathcal{I}(\mathcal{P}_{\Delta},\mathcal{T}_{\Delta})| \lesssim_{\eta} \sqrt{S\Delta^{-1}|\mathcal{P}_{\Delta}||\mathcal{T}_{\Delta}|} \stackrel{\eqref{eq:iotabig}}{\lesssim_{\mathbf{C},\eta}}\sqrt{\delta^{\kappa-2\eta}\Delta^{-2}|\mathcal{I}(\mathcal{P}_{\Delta},\mathcal{T}_{\Delta})|}. \end{displaymath}
Rearranging this yields $|\mathcal{I}(\mathcal{P}_{\Delta},\mathcal{T}_{\Delta})| \lesssim_{\mathbf{C},\eta} \delta^{\kappa-2\eta}\Delta^{-2}$, as claimed in \eqref{form3}. \end{proof}

\section{Application to sub-uniformly distributed sets}

In this section we use Proposition \ref{prop1} to prove Theorem \ref{thm:dim-estimate}. First, we need the following decomposition lemma: 

\begin{lemma}\label{lemma1}
	Let $\eta\in (0,1)$, and set $S\coloneqq\delta^{-\eta}$. Suppose that $\{\delta,S\delta,S^{2}\delta,\ldots,1\} = \{\Delta_{n},\ldots,\Delta_{0}\}\subset 2^{-\mathbb{N}}$, where $n= \eta^{-1}$. Given $\calP\subset\calD_\delta$, there is a partition
	\begin{equation*}
	\calP = \bigcup_{i=1}^N \calP_i
	\end{equation*}
	such that $\calP_{i}$ are pairwise disjoint, each $\calP_i$ is sub-$\{\Delta_j\}_{j=0}^n$-uniform with constant $2$, and $N\le (-Cn^{-1}\log\delta)^n$.
\end{lemma}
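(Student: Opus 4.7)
The plan is to build the partition by dyadic pigeonholing applied iteratively \emph{from the finest scale upwards}. The bottom-up direction is essential: it ensures that sub-uniformity secured at a deep scale is not destroyed by later refinements at a coarser scale.

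Concretely, set $\Pi_{0} := \{\mathcal{P}\}$ and, for $j = 1, \ldots, n$, construct $\Pi_{j}$ from $\Pi_{j-1}$ as follows. For each current sub-family $\mathcal{P}' \in \Pi_{j-1}$ and each $\mathbf{p} \in \mathcal{D}_{\Delta_{n-j}}(\mathcal{P}')$, assign the dyadic label $k(\mathbf{p}) := \lfloor \log_{2} |\mathcal{P}' \cap \mathbf{p}|_{\Delta_{n-j+1}} \rfloor$, and split $\mathcal{P}'$ according to the value of $k(\mathbf{p}_{n-j}(p))$, where $\mathbf{p}_{n-j}(p)$ denotes the $\Delta_{n-j}$-ancestor of $p$. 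Because $|\mathcal{P}' \cap \mathbf{p}|_{\Delta_{n-j+1}}$ is an integer in $[1, S^{2}]$, this label takes at most $O(\log S) = O(n^{-1}\log \delta^{-1})$ values, so each refinement multiplies the number of pieces by $O(\log S)$. After $n$ iterations the partition has $N \leq (C\log S)^{n} = (Cn^{-1}\log \delta^{-1})^{n}$ parts, matching the stated bound.

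The key to verifying that each $\mathcal{P}'' \in \Pi_{n}$ is sub-$\{\Delta_{j}\}_{j=0}^{n}$-uniform with constant $2$ is an inductive window property: after step $j$, every piece $\mathcal{P}'' \in \Pi_{j}$ has, for each $m \in \{n-j, \ldots, n-1\}$, an associated integer $k_{m}$ such that $|\mathcal{P}'' \cap \mathbf{p}|_{\Delta_{m+1}} \in [2^{k_{m}}, 2^{k_{m}+1})$ for all $\mathbf{p} \in \mathcal{D}_{\Delta_{m}}(\mathcal{P}'')$. Once this is known, sub-uniformity at scale $\Delta_{m}$ with constant $2$ is automatic: the maximum is strictly less than twice the minimum, and hence at most twice the average.

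The hard step will be showing the window property propagates as $j$ increases. The clean observation is that the step-$j$ restriction acts purely at the level of $\Delta_{n-j}$-ancestors: for each $\mathbf{p} \in \mathcal{D}_{\Delta_{n-j}}(\mathcal{P}')$, the entire subtree below $\mathbf{p}$ is either retained or entirely removed in $\mathcal{P}''$. Consequently, for $m > n-j$ and $\mathbf{q} \in \mathcal{D}_{\Delta_{m}}(\mathcal{P}'')$, the identity $|\mathcal{P}'' \cap \mathbf{q}|_{\Delta_{m+1}} = |\mathcal{P}' \cap \mathbf{q}|_{\Delta_{m+1}}$ holds, so the window for $\Delta_{m}$ survives unchanged; at the newly handled scale $m = n-j$ the window is forced by the definition of the pigeonhole. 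I should stress that a top-down iteration would fail here---restricting at a coarse scale by a fine-scale child-count label typically redistributes the coarse-scale child counts unevenly across parents, breaking the coarser window---so the bottom-up order is precisely what makes the preservation go through.
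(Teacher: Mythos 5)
Your proof is correct and follows essentially the same strategy as the paper's: bottom-up dyadic pigeonholing on the $\Delta_{m+1}$-child counts of $\Delta_m$-cubes, proceeding from the finest scale pair to the coarsest, with the number of pieces multiplied by $O(\log S)$ per step. The one distinction is that you make fully explicit the invariant the paper leaves implicit in its "Clearly" and "It follows" claims — namely the window property (each $\Delta_m$-cube's $\Delta_{m+1}$-child count lies in a single dyadic window $[2^{k_m},2^{k_m+1})$) and why it survives each refinement (restriction at scale $\Delta_{n-j}$ either keeps or discards an entire subtree, so finer-scale counts are unchanged); this is a welcome clarification rather than a change of route.
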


\begin{proof}
	Note that for any $j\in\{0,\dots,n-1\}$ each $\mathbf{p}\in\calP_{\Delta_{j}}$ contains at most $S^2$ cubes from $\calP_{\Delta_{j + 1}}$. For any $p\in\calP$, we will denote by $\mathbf{p}^j$ the dyadic $\Delta_j$-parent of $p$.
	
	For $0\le k\le \log_2 S^2$ we set
	\begin{equation*}
	\calP^k = \{p\in\calP\ :\ 2^{k-1}<|\mathbf{p}^{n - 1} \cap\calP|\le 2^k \}.
	\end{equation*}
	Clearly, the sets $\calP^k$ are sub-$\{\Delta_{n},\Delta_{n - 1},\Delta_{0}\}$-uniform.
	
	We proceed inductively. Suppose that $\widetilde{\calP}\subset\calP$ is sub-$\{\Delta_{n},\ldots,\Delta_{m},\Delta_{0}\}$-uniform, where $m \geq 2$. For $0\le k\le \log_2 S^2$ we set
	\begin{equation*}
	\widetilde{\calP}^k = \{p\in\widetilde{\calP}\ :\ 2^{k-1}<|\mathbf{p}^{m - 1}\cap\widetilde{\calP}_{\Delta_m}|\le 2^k \}.
	\end{equation*}
	It follows that the sets $\widetilde{\calP}^k$ are sub-$\{\Delta_{n},\ldots,\Delta_{m},\Delta_{m - 1},\Delta_{0}\}$-uniform
	
	After performing this subpartitioning procedure $n$ times, we end up with sub-$\{\Delta_j\}_{j=0}^n$-uniform sets. Each time, we increased the number of partitions at most by a factor of $\log_2 S^2+1\sim -\eta\log\delta$.
\end{proof}

%\begin{cor} Let $\mathcal{P} \subset \mathcal{D}_{\delta}$, and let $\mathcal{T} \subset \mathcal{T}^{\delta}$ be a family of tubes which are $\kappa$-heavy w.r.t. $\mathcal{P}$:
%	\begin{displaymath} |\{p \in \mathcal{P} : B_{p} \cap \mathbb{T}_{T} \neq \emptyset\}| \geq \delta^{1 - \kappa}|\mathcal{P}|, \qquad T \in \mathcal{T}. \end{displaymath} 
%	Then, there exists a disjoint decomposition $\mathcal{T} = \mathcal{T}^{1} \cup \ldots \cup \mathcal{T}^{N}$ with $N \leq (\log(\tfrac{1}{\delta}))^{n}$ such that $n \sim 1/\kappa$, and for each $j \in \{1,\ldots,N\}$ a scale $\Delta_{j} \in 2^{-\N} \cap [\delta,\delta^{\kappa/2}]$ such that
%	\begin{displaymath} |\mathcal{I}(\mathcal{P}_{\Delta_{j}},\mathcal{T}^{j}_{\Delta_{j}})| \leq \delta^{\kappa/2}\Delta_{j}^{-2}. \end{displaymath}
%\end{cor}

We use Proposition \ref{prop1} and the decomposition lemma above to get an estimate on incidences between a sub-uniform set $\calP$ and a family of tubes $\calT$ consisting exclusively of ``heavy tubes''.

\begin{lemma}\label{lem:uniformP}
	For every $\mathbf{C} > 0$, $\kappa \in (0,1)$ and $\eta \in (0,\tfrac{\kappa}{3}]$ the following holds for all $\delta \in 2^{-\N}$ small enough such that also $S := \delta^{-\eta} \in 2^{-\N}$.  Consider the scale sequence $\{\delta,S\delta,S^{2}\delta,\ldots,1\} = \{\Delta_{n},\Delta_{n - 1},\ldots,\Delta_{0}\}$. Suppose that $\mathcal{P} \subset \mathcal{D}_{\delta}$ is sub-$\{\Delta_j\}_{j=0}^n$-uniform with constant $\mathbf{C}$, and that $\mathcal{T} \subset \mathcal{T}^{\delta}$ satisfies
	\begin{equation}\label{eq:heavytubes}
	|\mathcal{I}(\calP,\{T\})| =  |\{p \in \mathcal{P} : B_{p} \cap \mathbb{T}_{T} \neq \emptyset\}| \geq \delta^{1 - \kappa}|\mathcal{P}|, \qquad T\in\mathcal{T}.
	\end{equation}
	Then,
	\begin{equation}\label{form16} \mathcal{H}_{\infty}^{2-\kappa+3\eta}(\mathcal{I}(\mathcal{P},\mathcal{T})) \lesssim_{\mathbf{C},\eta} \delta^{\eta/2}. \end{equation}
\end{lemma}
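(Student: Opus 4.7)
The plan is to reduce to the setting of Proposition \ref{prop1} by further partitioning $\mathcal{T}$, and then convert the resulting covering count to a Hausdorff content bound through Lemma \ref{lemma2}.

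First I would apply the decomposition lemma, Lemma \ref{lemma1}, to the tube family $\mathcal{T}$: this yields pairwise disjoint subfamilies $\mathcal{T} = \bigcup_{i=1}^{N} \mathcal{T}_{i}$, each sub-$\{\Delta_{j}\}_{j=0}^{n}$-uniform with constant $2$, where $N \lesssim_{\eta} (\log \delta^{-1})^{1/\eta}$. The per-tube hypothesis \eqref{eq:heavytubes} is inherited by every $\mathcal{T}_{i}$, and summing over $T \in \mathcal{T}_{i}$ gives $|\mathcal{I}(\mathcal{P},\mathcal{T}_{i})| \geq \delta^{1-\kappa}|\mathcal{P}||\mathcal{T}_{i}|$. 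Since $\mathcal{P}$ is sub-uniform with constant $\mathbf{C}$ and $\mathcal{T}_{i}$ with constant $2$, and $\eta \leq \kappa/3 \leq \kappa/2$, Proposition \ref{prop1} applies to the pair $(\mathcal{P},\mathcal{T}_{i})$ and produces a scale $\Delta_{i} \in [\delta,\delta^{\eta}]$ such that
\begin{equation*}
|\mathcal{I}(\mathcal{P}_{\Delta_{i}},(\mathcal{T}_{i})_{\Delta_{i}})| \lesssim_{\mathbf{C},\eta} \delta^{\kappa-2\eta}\Delta_{i}^{-2}.
\end{equation*}

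By Lemma \ref{lemma2}, the inclusion $(\mathcal{I}(\mathcal{P},\mathcal{T}_{i}))_{\Delta_{i}} \subset \mathcal{I}(\mathcal{P}_{\Delta_{i}},(\mathcal{T}_{i})_{\Delta_{i}})$ then shows that the incidence set $\mathcal{I}(\mathcal{P},\mathcal{T}_{i}) \subset \mathbb{R}^{4}$ is covered by at most $\lesssim \delta^{\kappa-2\eta}\Delta_{i}^{-2}$ dyadic cubes of side length $\Delta_{i}$ (diameter $\sim \Delta_{i}$). Taking $s = 2-\kappa+3\eta$, and noting that $s-2 = -\kappa+3\eta \leq 0$ because $\eta \leq \kappa/3$, we obtain
\begin{equation*}
\mathcal{H}_{\infty}^{s}(\mathcal{I}(\mathcal{P},\mathcal{T}_{i})) \lesssim_{\mathbf{C},\eta} \delta^{\kappa-2\eta}\Delta_{i}^{-2}\cdot\Delta_{i}^{s} = \delta^{\kappa-2\eta}\Delta_{i}^{-\kappa+3\eta} \leq \delta^{\kappa-2\eta}\cdot\delta^{-\kappa+3\eta} = \delta^{\eta},
\end{equation*}
where in the second inequality I used that $\Delta_{i} \geq \delta$ together with the non-positivity of the exponent $-\kappa+3\eta$.

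Finally, summing the subadditivity of $\mathcal{H}_{\infty}^{s}$ over $i = 1,\ldots,N$ yields
\begin{equation*}
\mathcal{H}_{\infty}^{s}(\mathcal{I}(\mathcal{P},\mathcal{T})) \leq \sum_{i=1}^{N}\mathcal{H}_{\infty}^{s}(\mathcal{I}(\mathcal{P},\mathcal{T}_{i})) \lesssim_{\mathbf{C},\eta} N\delta^{\eta} \lesssim_{\eta} (\log \delta^{-1})^{1/\eta}\delta^{\eta},
\end{equation*}
which is bounded by $\delta^{\eta/2}$ once $\delta$ is small enough in terms of $\eta$, since $(\log \delta^{-1})^{1/\eta}$ grows subpolynomially in $\delta^{-1}$. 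The main technical point is that Proposition \ref{prop1} requires \emph{both} families to be sub-uniform, whereas the hypothesis of Lemma \ref{lem:uniformP} only enforces this for $\mathcal{P}$. The partitioning step via Lemma \ref{lemma1} bridges that gap, at the negligible cost of a polylogarithmic factor that is absorbed into the slack between $\eta$ and $\eta/2$ in the final exponent (and this is precisely why the hypothesis is tightened to $\eta \leq \kappa/3$ here, compared with $\eta \leq \kappa/2$ in Proposition \ref{prop1}).
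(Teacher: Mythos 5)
Your proposal is correct and follows essentially the same route as the paper's proof: partition $\mathcal{T}$ via Lemma \ref{lemma1} into sub-uniform subfamilies, apply Proposition \ref{prop1} to each piece, convert the resulting coarse-scale incidence bounds into Hausdorff content estimates via Lemma \ref{lemma2}, and sum, absorbing the polylogarithmic factor $N$ into the slack $\delta^{\eta} \le \delta^{\eta/2}$. The paper organizes the bookkeeping slightly differently (it writes the covering by $\bigcup_{i}\mathcal{I}(\mathcal{P}_{\Delta(i)},\mathcal{T}_{i,\Delta(i)})$ before estimating the content, rather than bounding each $\mathcal{H}_{\infty}^{s}(\mathcal{I}(\mathcal{P},\mathcal{T}_{i}))$ separately), but the content and the role of the tightened hypothesis $\eta \le \kappa/3$ are identical.
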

\begin{proof}
	We apply Lemma \ref{lemma1} to the tube family $\calT$ to obtain a partition of $\calT$ into sub-$\{\Delta_j\}_{j=0}^n$-uniform (with constant $2$) subfamilies $\calT_1,\dots,\calT_N$. It follows from \eqref{eq:heavytubes} that
	\begin{equation*}
	|\mathcal{I}(\calP,\mathcal{T}_i)|\ge\delta^{1-\kappa}|\calP||\calT_i|, \qquad i \in \{1,\ldots,N\}.
	\end{equation*}
	Thus, we may apply Proposition \ref{prop1} for each $1 \leq i \leq N$ to obtain $\Delta(i)\in\{\Delta_1,\dots,\Delta_n\}$ such that 
	\begin{equation}\label{form17}
	|\mathcal{I}(\mathcal{P}_{\Delta(i)},\mathcal{T}_{i,\Delta(i)})| \lesssim_{\mathbf{C},\eta} \delta^{\kappa-2\eta} \Delta(i)^{-2}.
	\end{equation}
	Since $\mathcal{I}(\calP,\calT) = \bigcup_{i = 1}^N\mathcal{I}(\calP,\calT_i)$, we note that
	\begin{equation*}
	\mathcal{I}(\calP,\calT)\subset \bigcup_{i = 1}^{N} (\mathcal{I}(\mathcal{P},\mathcal{T}_{i}))_{\Delta_{i}} \stackrel{\mathrm{L.\,} \ref{lemma2}}{\subset} \bigcup_{i = 1}^N \mathcal{I}(\mathcal{P}_{\Delta(i)},\mathcal{T}_{i,\Delta(i)}).
	\end{equation*}
	We may then use this covering to obtain \eqref{form16}:
	\begin{align*} \mathcal{H}_\infty^{2-\kappa+3\eta}(\mathcal{I}(\mathcal{P},\mathcal{T})) & \le \sum_{i=1}^N \Delta(i)^{2-\kappa+3\eta} |\mathcal{I}(\mathcal{P}_{\Delta(i)},\mathcal{T}_{i,\Delta(i)})|\\
	&\stackrel{\eqref{form17}}{\lesssim_{\mathbf{C},\eta}} \sum_{i=1}^N\Delta(i)^{-\kappa+3\eta}\delta^{\kappa-2\eta}\\
	&\le N\delta^{\kappa-2\eta}\delta^{-\kappa+3\eta}\lesssim\delta^{\eta/2},
	\end{align*}
	where in the last estimate we used $N\lesssim (\log\frac{1}{\delta})^{\eta^{-1}}\le\delta^{-\eta/2}$ for $\delta$ small enough.
\end{proof}
Recall from Definition \ref{def:subUniform} that a bounded set $K$ is sub-uniformly distributed if it satisfies $|K|_{R} \cdot |K \cap Q|_{r} \le C|K|_{r},$ for all $Q \in \mathcal{D}_{R}(K), \, 0 < r \leq R < \infty$. Below, $\underline{\dim}_{\mathrm{B}} K$ denotes the lower box-counting dimension of $K$.
\begin{lemma}\label{lem:dimhEqualdimbox}
	If $K$ is compact and sub-uniformly distributed, then $\dim_{\mathrm{H}} K=\underline{\dim}_{\mathrm{B}} K$.
\end{lemma}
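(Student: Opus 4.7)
The inequality $\dim_{\mathrm{H}} K \leq \underline{\dim}_{\mathrm{B}} K$ is standard and holds for every bounded set, so the plan is to establish the reverse inequality $\dim_{\mathrm{H}} K \geq \underline{\dim}_{\mathrm{B}} K$. The idea is to exploit the sub-uniform distribution property as a weak Frostman-type condition: it will force every cover of $K$ to be "balanced", which in turn yields a lower bound on the $s$-dimensional Hausdorff content of $K$.

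\textbf{Step 1: Consequence of lower box-counting dimension.} Fix $s < \underline{\dim}_{\mathrm{B}} K$. By definition of $\underline{\dim}_{\mathrm{B}}$, there exists $r_{0} > 0$ such that
\begin{equation*}
|K|_{r} \geq r^{-s}, \qquad 0 < r \leq r_{0}.
\end{equation*}

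\textbf{Step 2: Reduction to covers by dyadic cubes.} Given a cover of $K$ by balls $\{B(x_{i},r_{i})\}$ with $r_{i} \leq \delta \ll r_{0}$, a standard comparison lets us replace it, at the cost of a universal constant, by a cover of $K$ by pairwise disjoint dyadic cubes $\{p_{i}\}$ with side lengths $R_{i} \sim r_{i}$, each $p_{i} \in \mathcal{D}_{R_{i}}(K)$. It therefore suffices to prove $\sum_{i} R_{i}^{s} \gtrsim 1$ for every such disjoint dyadic cover.

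\textbf{Step 3: Balancing via sub-uniform distribution.} Choose any scale $r \leq \min_{i} R_{i}$ small enough that $r \leq r_{0}$. Since the cubes $p_{i}$ are pairwise disjoint dyadic cubes covering $K$, every cube in $\mathcal{D}_{r}(K)$ is contained in exactly one $p_{i}$, so
\begin{equation*}
|K|_{r} = \sum_{i} |K \cap p_{i}|_{r}.
\end{equation*}
Applying the sub-uniform distribution property \eqref{form43} with $Q = p_{i}$, $R = R_{i}$, we get $|K \cap p_{i}|_{r} \leq C |K|_{r}/|K|_{R_{i}}$. Dividing by $|K|_{r}$ yields
\begin{equation*}
1 \leq C \sum_{i} \frac{1}{|K|_{R_{i}}}.
\end{equation*}
By Step 1, $1/|K|_{R_{i}} \leq R_{i}^{s}$ as long as $R_{i} \leq r_{0}$, so $\sum_{i} R_{i}^{s} \geq 1/C$.

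\textbf{Step 4: Conclusion.} The previous step shows that $\mathcal{H}_{\infty}^{s}(K) \geq c > 0$, hence $\dim_{\mathrm{H}} K \geq s$. Letting $s \nearrow \underline{\dim}_{\mathrm{B}} K$ gives the desired inequality, and combined with the trivial direction we obtain $\dim_{\mathrm{H}} K = \underline{\dim}_{\mathrm{B}} K$.

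The only genuinely delicate point is Step 2, the reduction of an arbitrary $\mathcal{H}^{s}$-cover to a disjoint dyadic cover with comparable parameters; this is routine but has to be done carefully so that the sub-uniform property can be invoked at the "correct" scale $R_{i}$. Everything else is a direct application of \eqref{form43}.
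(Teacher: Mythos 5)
Your argument is correct in its essentials and follows the same strategy as the paper: treat the sub-uniformity inequality $|K|_{R}\cdot|K\cap Q|_{r}\le C|K|_{r}$ as a Frostman-type bound on the mass $|K\cap Q|_{r}/|K|_{r}$ that each cover element can carry, and combine it with the box-counting lower bound $|K|_{R}\ge R^{-s}$ to force $\sum R_i^{s}\gtrsim 1$. The central mechanism, namely $|K|_{r}=\sum_i|K\cap p_i|_{r}$ giving $1\le C\sum_i 1/|K|_{R_i}$, is precisely what the paper's proof exploits (written there in the equivalent form $\sum_U |K\cap U|_{\delta}/|K|_{\delta}\gtrsim 1$).

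There is one omission you should repair. In Step 3 you pick a common scale $r\le\min_i R_i$, which tacitly assumes the dyadic cover $\{p_i\}$ is finite; for a countable cover the infimum of the side lengths could be $0$ and no such $r$ exists. This is exactly where the compactness of $K$ is needed: extract a finite subcover first (e.g.\ enlarge the balls to open balls, use compactness, then pass to dyadic cubes). The paper does this at the very beginning of its proof. Once you insert that sentence, your Steps 2--4 go through, and in fact your reduction to a genuinely pairwise disjoint dyadic cover is a slightly cleaner way to run the paper's argument, since it makes the additivity $|K|_{r}=\sum_i|K\cap p_i|_{r}$ exact rather than only an inequality up to a bounded overlap constant. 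Finally, a cosmetic point: what you actually lower-bound is $\mathcal{H}^{s}_{\delta}(K)$ for small $\delta$ (your balls all have radius $\le\delta$), not $\mathcal{H}^{s}_{\infty}(K)$; this still gives $\mathcal{H}^{s}(K)>0$ and hence $\dim_{\mathrm{H}}K\ge s$, so the conclusion stands.
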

\begin{proof}
	Since $\dim_{\mathrm{H}} K \le \underline{\dim}_{\mathrm{B}} K$ is always true, we only need to show $\dim_{\mathrm{H}} K \ge \underline{\dim}_{\mathrm{B}} K$. By the definition of $\underline{\dim}_{\mathrm{B}} K$, for every $\epsilon > 0$ there exists $r_{\epsilon} > 0$ such that $|K|_{r} \geq r^{-t + \epsilon}$ for all $r \in (0,r_{\epsilon}]$. Now, fix $\epsilon > 0$, and let $\mathcal{U}$ be an open cover of $K$ such that $\diam(U) \leq r_{\epsilon}$ for all $U \in \mathcal{U}$. We will show that $\sum_{U \in \mathcal{U}} \diam(U)^{t - \epsilon} \gtrsim 1$, where the constant only depends on the sub-uniformity constant of $K$. 
	
	Since $K$ is compact, there exists a finite sub-cover $\mathcal{U}_{0} \subset \mathcal{U}$. Let $\delta \in 2^{-\N}$ be so small that $\delta \leq \min\{\diam(U) : U \in \mathcal{U}_{0}\}$. For this $\delta$, by the sub-uniformity of $K$, we have
	\begin{displaymath} \frac{|K \cap U|_{\delta}}{|K|_{\delta}} \lesssim \frac{|K|_{\delta}}{|K|_{\diam(U)} \cdot |K|_{\delta}} \leq \diam(U)^{t - \epsilon}, \qquad U \in \mathcal{U}_{0}, \end{displaymath}
	using $\diam(U) \leq r_{\epsilon}$ in the second inequality. Further, choosing $\delta > 0$ smaller if necessary, we may assume that every $p \in \mathcal{D}_{\delta}(K)$ is contained in at least one element of $\mathcal{U}_{0}$. Then,
	\begin{displaymath} \sum_{U \in \mathcal{U}_{0}} \diam(U)^{t - \epsilon} \gtrsim \frac{1}{|K|_{\delta}} \sum_{U \in \mathcal{U}_{0}} |K \cap U|_{\delta} \geq 1, \end{displaymath} 
	as claimed. This shows that $\Hd K \geq t - \epsilon$, and letting $\epsilon \to 0$ completes the proof.  \end{proof}
Given $z\in \R^2$ and $e\in S^1$, recall that $\ell_{e,z} = z + \mathrm{span}(e)$. We consider the following modification of the heavy part $H(K,e,s)$ defined in the introduction:
\begin{equation*}
	\underline{H}(K,e,s) = \{z\in K\ : \ \underline{\dim}_{\mathrm{B}}(K\cap \ell_{e,z})\ge s\}.
\end{equation*}
We may also consider $\overline{H}(K,e,s)$, where $\underline{\dim}_{\mathrm{B}}(K\cap \ell_{e,z})$ is replaced by $\overline{\dim}_{\mathrm{B}}(K\cap \ell_{e,z})$. Since $\dim_{\mathrm{H}}(A)\le \underline{\dim}_{\mathrm{B}}(A)\le\overline{\dim}_{\mathrm{B}}(A)$ for every bounded set $A\subset\R^2$, it is clear that 
\begin{equation*}
	H(K,e,s)\subset \underline{H}(K,e,s)\subset \overline{H}(K,e,s).
\end{equation*}
Therefore, Theorem \ref{thm:dim-estimate} is a consequence of the following slightly stronger estimate.
\begin{proposition}\label{prop:Ahlfors}
	Let $K\subset \R^2$ be a compact sub-uniformly distributed set with $\dim_{\mathrm{H}} K =t\in (0,2)$. For every $\max\{t-1,0\}<s\le 1$ and a.e. $e\in S^1$
	\begin{equation}\label{eq:slicing}
		\dim_{\mathrm{H}} \underline{H}(K,e,s) \le \max\{t-s,0\}.
	\end{equation}
	Additionally, if $K$ is $t$-Ahlfors regular, then $\dim_{\mathrm{H}} \overline{H}(K,e,s) \le \max\{t-s,0\}$.
\end{proposition}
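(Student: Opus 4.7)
The plan is to lift the discrete incidence bound of Lemma \ref{lem:uniformP} to a Hausdorff-dimension estimate by averaging over directions $e\in S^{1}$. Fix $s \in (\max\{t-1,0\}, 1]$ and an exponent $\alpha > \max\{t-s, 0\}$; it suffices to show $\mathcal{H}^{\alpha}_\infty(\underline{H}(K,e,s)) = 0$ for $\mathcal{H}^{1}$-a.e.\ $e \in S^{1}$. I pick auxiliary parameters $s' \in (\max\{t-1,0\}, s)$, $t' \in (t, s'+1)$, and $\eta \in (0, \kappa/3]$ with $\kappa := s' - (t'-1) > 0$, chosen small enough that $(t'-s') + 3\eta < \alpha$. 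By Lemma \ref{lem:dimhEqualdimbox} we have $\underline{\dim}_{\mathrm{B}} K = t$, so there is a sequence $\delta_k \in 2^{-\N}$, $\delta_k \to 0$, along which $|K|_{\delta_k} \le \delta_k^{-t'}$; all discretization is carried out at these good scales.

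For $\delta = \delta_k$, set $\mathcal{P}_\delta := \mathcal{D}_\delta(K)$; the sub-uniform distribution of $K$ translates directly to sub-$\{\Delta_j\}_{j=0}^n$-uniformity of $\mathcal{P}_\delta$ with the structural constant of $K$. For each direction $e$, let $\mathcal{T}_\delta(e) \subset \mathcal{T}^\delta$ collect the tubes parallel to $e$ with $|\mathcal{I}(\mathcal{P}_\delta, \{T\})| \ge \delta^{-s'}$; the inequality $\delta^{-s'} \ge \delta^{1-\kappa}|\mathcal{P}_\delta|$ follows from the choice of $\kappa$ together with $|\mathcal{P}_\delta| \le \delta^{-t'}$. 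Setting $\mathcal{T}_\delta := \bigcup_e \mathcal{T}_\delta(e)$ and $\mathcal{I}_\delta := \mathcal{I}(\mathcal{P}_\delta, \mathcal{T}_\delta)$, Lemma \ref{lem:uniformP} yields
\begin{equation*}
\mathcal{H}^{1+(t'-s')+3\eta}_\infty(\mathcal{I}_\delta) \lesssim_{K,\eta} \delta^{\eta/2},
\end{equation*}
using $2-\kappa+3\eta = 1+(t'-s')+3\eta$. I then view $\mathcal{I}_\delta \subset \mathcal{D}_\delta(\R^4)$ and project via the Lipschitz map $\pi : \R^4 \to \R^3$, $(p, T) \mapsto (p, a(T))$, where $a(T)$ is the slope of $T$; the same content bound persists for $\pi(\mathcal{I}_\delta) \subset \R^2 \times \R$. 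The classical Fubini-type slicing inequality for Hausdorff content (covering by sets of diameter $r_i$ and observing that each slice receives mass $\le r_i^{\alpha-1}$ over an interval of length $\le r_i$) produces
\begin{equation*}
\int_{S^{1}} \mathcal{H}^{(t'-s')+3\eta}_\infty\!\big(\pi(\mathcal{I}_\delta)_e\big)\,d\mathcal{H}^{1}(e) \lesssim_{K,\eta} \delta^{\eta/2},
\end{equation*}
where $\pi(\mathcal{I}_\delta)_e$ denotes the $e$-slice.

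Chebyshev bounds the measure of bad directions $\{e : \mathcal{H}^{(t'-s')+3\eta}_\infty(\pi(\mathcal{I}_{\delta_k})_e) > \delta_k^{\eta/4}\}$ by $\lesssim \delta_k^{\eta/4}$, which is summable in $k$, so Borel--Cantelli gives, for a.e.\ $e$, a threshold $k_0(e)$ with $\mathcal{H}^{(t'-s')+3\eta}_\infty(\pi(\mathcal{I}_{\delta_k})_e) \le \delta_k^{\eta/4}$ for every $k \ge k_0(e)$. For such $e$ and any $p \in \underline{H}(K,e,s)$, the definition of lower box dimension gives $|K \cap \ell_{e,p}|_\delta \ge \delta^{-s'}$ for all small $\delta$, placing the $\delta_k$-cube containing $p$ inside $\pi(\mathcal{I}_{\delta_k})_e$ for all sufficiently large $k$. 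Thus $\underline{H}(K,e,s)$ lies in the $\liminf$ of the cube unions of the sets $\pi(\mathcal{I}_{\delta_k})_e$, and the standard observation $\mathcal{H}^\beta_\infty(\liminf A_k) = 0$ whenever $\mathcal{H}^\beta_\infty(A_k) \to 0$ gives $\mathcal{H}^{(t'-s')+3\eta}_\infty(\underline{H}(K,e,s)) = 0$. Sending $(t'-s')+3\eta$ to $\max\{t-s,0\}$ establishes \eqref{eq:slicing}. For the Ahlfors-regular strengthening, $|K|_\delta \sim \delta^{-t}$ holds at every $\delta \in 2^{-\N}$, so the incidence bound is available unconditionally, and a Tonelli argument gives $\sum_\delta \mathcal{H}^{\alpha}_\infty(\pi(\mathcal{I}_\delta)_e) < \infty$ for a.e.\ $e$, which forces $\mathcal{H}^{\alpha}_\infty(\overline{H}(K,e,s)) = 0$ since $\overline{H}(K,e,s) \subset \bigcup_{\delta \le \delta_0} \pi(\mathcal{I}_\delta)_e$ for every $\delta_0$.

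The main obstacle I expect, beyond the bookkeeping of direction discretization, is the necessity of slicing in three dimensions rather than four: a direct slice of $\mathcal{I}_\delta \subset \R^4$ in the direction parameter would yield exponent $(t'-s')+3\eta-1$, one unit short of what is needed, so the tube-intercept coordinate must be projected out via $\pi$ before slicing. The sub-uniform-distribution hypothesis does the essential work in two ways: it makes $\mathcal{D}_\delta(K)$ sub-$\{\Delta_j\}$-uniform so that Lemma \ref{lem:uniformP} is applicable, and, through Lemma \ref{lem:dimhEqualdimbox}, it supplies the scales along which $|K|_\delta$ stays close to $\delta^{-t}$; Ahlfors regularity is invoked only to upgrade this from a subsequence of scales to all scales, which is exactly what the upper box-counting version demands.
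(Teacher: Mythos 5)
Your proposal is correct and follows essentially the same argument as the paper: discretize at the good scales furnished by Lemma \ref{lem:dimhEqualdimbox}, apply Lemma \ref{lem:uniformP} to the incidence set between the sub-uniform cover of $K$ and the heavy tubes, and then integrate over directions to pass from content of incidences to content of slices. The only differences are cosmetic — you package the direction-averaging step as a Lipschitz projection to $\R^3$ followed by a Fubini slicing inequality (the paper integrates the covering directly, using $\mathcal{H}^{1}(\{e:R_i\in\mathcal{R}_e\})\lesssim \ell(R_i)$) and you invoke Chebyshev plus Borel--Cantelli where the paper gets by with countable subadditivity alone — so the two proofs are the same in substance.
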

\begin{proof}[Proof]
	We assume that $s\le t$, otherwise the problem is trivial. We assume also $K\subset B(1)$. Fix $0<\eta  < \tfrac{1}{6}(s-t+1)$.
	
	Given $\delta\in 2^{-\mathbb{N}}$, we say that $T \in \mathcal{T}^{\delta}$ is roughly parallel to $e\in S^1$ if $T$ contains some line parallel to $e$. For each $e\in S^1$ denote by $\mathcal{H}_{e,\delta}$ the family of all $T \in \mathcal{T}^{\delta}$ roughly parallel to $e$ satisfying
	\begin{equation}\label{eq:1}
	|\mathcal{I}(K_{\delta}, \{T\})| \ge \delta^{t-s+2\eta}|K|_{\delta}.
	\end{equation}
	Set
	\begin{equation*}
	\calH_\delta = \bigcup_{e\in S^1} \calH_{e,\delta}.
	\end{equation*}
	Note that the tubes $T \in\calH_\delta$ are heavy in the sense of Lemma \ref{lem:uniformP}: more precisely, according to \eqref{eq:1}, they satisfy \eqref{eq:heavytubes} with $\kappa \coloneqq s-t+1-2\eta\in (0,1)$ and $\calP=K_{\delta}$.
	
	Recall that for each $z\in \underline{H}(K,e,s)$ we have $\underline{\dim}_{\mathrm{B}}(K\cap \ell_{e,z})\ge s$. It follows that for all sufficiently small $\delta \in 2^{-\N}$ there exists a tube $T\in\calT^\delta$ containing $\ell_{e,z}$ and satisfying
	\begin{equation}\label{eq:bla}
		|\mathcal{I}(K_\delta, \{T\})| \ge \delta^{-s+\eta}.
	\end{equation}
	(Here we committed a small inaccuracy: the dyadic tubes have currently been defined so that the slopes of their core lines lie between $0$ and $1$. So, to find any dyadic $\delta$-tubes containing $\ell_{e,z}$ at all, we need to constrain $e$ to an arc $J \subset S^{1}$ corresponding to these slopes. We leave it to the reader to check that it suffices to prove \eqref{eq:slicing} for a.e. $e \in J$.) 
	
	Let $\delta_k\to 0$ be a sequence of dyadic numbers such that $|K|_{\delta_k}\le \delta_k^{-t-\eta}$ for all $k \in \N$ (such a sequence exists by Lemma \ref{lem:dimhEqualdimbox}). Then, for $T \in \mathcal{T}^{\delta_{k}}$ satisfying \eqref{eq:bla} we have
	\begin{equation}\label{eq:2}
	|\mathcal{I}(K_{\delta_k}, \{T\})| \ge \delta_k^{-s+\eta}\ge  \delta_k^{t-s+2\eta}|K|_{\delta_k},
	\end{equation}
	and in particular, $T\in \calH_{e,\delta_k}$. This shows that for every $z\in \underline{H}(K,e,s)$ there exists $k_0 = k_0(e,z)$ such that for all $k\ge k_0$ we have $z\in K\cap \bigcup_{T\in\calH_{e,\delta_k}}T$. Hence, for any $N\in\mathbb{N}$
	\begin{equation}\label{eq:3}
	\underline{H}(K,e,s) \subset K\cap \bigcup_{k\ge N}\bigcup_{T\in\calH_{e,\delta_k}}T.
	\end{equation}
	For brevity of notation, we set $\calK_k\coloneqq\calK_{\delta_k}=\mathcal{D}_{\delta_{k}}(K),$ $\calH_{e,k}\coloneqq\calH_{e,\delta_k}$, $\calH_k\coloneqq\calH_{\delta_k}$, and $H_{e,k}\coloneqq K\cap \bigcup_{T\in\calH_{e,k}}T.$
	
	Observe that, since $K$ is sub-uniformly distributed, the family $\mathcal{K}_{k}$ is sub-$\{\Delta_j\}_{j=0}^n$-uniform (with some constant $\mathbf{C}\ge 1$) for any sequence of scales $\{\Delta_j\}_{j=0}^n$ larger than $\delta_k$. In particular, Lemma \ref{lem:uniformP} tells us that for all $\delta_k\le\delta_0=\delta_0(\kappa,\eta,\mathbf{C})$, where $\eta$ and $\kappa$ are as above, we have the following bound for the incidences between the heavy tubes $\calT=\calH_k$ and $\calP= \mathcal{K}_{k}$:
	\begin{equation}\label{eq:bla2}
		\mathcal{H}_{\infty}^{2-\kappa+3\eta}(\mathcal{I}(\mathcal{K}_{k},\mathcal{H}_k)) \lesssim_{\eta,\mathbf{C}} \delta_k^{\eta/2},
	\end{equation}
	From now on we assume that $\delta_k\le\delta_0$ for all $k\in\mathbb{N}$.
	
%	For each $e\in S^1$ let
%	\begin{equation*}
%		{H}_{e,k} = K\cap \bigcup_{T\in\mathcal{T}_{e,k}}T.
%	\end{equation*}
%	By the discussion around \eqref{eq:bla} we have $\underline{H}(K,e,s)\subset \bigcup_{k\ge N}H_{e,k}$ for any $N\in\mathbb{N}$.
	
%	Observe that for any $e\in S^1$ we have
%	\begin{equation*}
%		\mathcal{H}_{\infty}^{1-\kappa+\epsilon_0}(H_{e,k})\ d\theta \lesssim \mathcal{H}_{\infty}^{1-\kappa+\epsilon_0}(\mathcal{I}(K_{\delta_k},\mathcal{T}_{e,k})).
%	\end{equation*}
	Let $\calR=\{R_i\}\in\mathcal{D}(\R^4)$ be a family of dyadic cubes covering $\mathcal{I}(\mathcal{K}_{k},\mathcal{H}_k)\subset\R^4$ which attains the bound \eqref{eq:bla2}. Each $R_i$ can be written as $R_i=p_i\otimes T_i = p_i\times p_{T_i}$, with $p_i, p_{T_i}\in \calD(\R^2)$. For $e\in S^1$ let $\calR_e\subset\calR$ be the family of cubes $R_i$ for which $T_i$ is roughly parallel to $e$, and set
	\begin{equation*}
		\calP_e = \{p_i \ :\ R_i = p_{i} \otimes T_{i} \in \calR_e\}.
	\end{equation*}
	We claim that $\calP_e$ is a covering of $H_{e,k}$. Indeed, note that $\calR_e$ are a covering of the set of incidences $\mathcal{I}(\calK_k, \calH_{e,k})$. At the same time, for every $z\in H_{e,k}$ we have that the cube $p\in\calK_k$ containing $z$ is incident to some heavy tube $T\in\calH_{e,k}$. Thus, $p\otimes T\in \mathcal{I}(\calK_k, \calH_{e,k}),$ and there exists some $R_i=p_i\otimes T_i\in \calR_e$ such that $p\otimes T \subset p_i\otimes T_i$, and in particular $p\subset p_i$.
	
	Using the fact that for any $R_i\in\calR$
	\begin{equation*}
		\mathcal{H}^1(\{e\in S^1 \ :\ R_i\in\calR_e\}) \lesssim \ell(R_i),
	\end{equation*}
	and the fact that $\calP_e$ covers $H_{e,k}$, we get
	\begin{align*}
		\int_{S^1} \mathcal{H}_{\infty}^{1-\kappa+3\eta}(H_{e,k})\, d\mathcal{H}^{1}(e) & \lesssim \int_{S^1} \sum_{p_i\in \calP_e} \ell(p_i)^{1-\kappa+3\eta}\, d\mathcal{H}^{1}(e)\\
		& \lesssim \int_{S^1} \sum_{R_i\in \calR_e} \ell(R_i)^{1-\kappa+3\eta}\, d\mathcal{H}^{1}(e)\\
		& = \sum_{R_i\in\calR}\ell(R_i)^{1-\kappa+3\eta}\cdot \mathcal{H}^1(\{e\in S^1 \ :\ R\in\calR_e\})\\
		& \lesssim \sum_{R_i\in\calR}\ell(R_i)^{2-\kappa+\epsilon_0} \lesssim \mathcal{H}_{\infty}^{2-\kappa+3\eta}(\mathcal{I}(\mathcal{K}_{k},\mathcal{H}_k)) \stackrel{\eqref{eq:bla2}}{\lesssim_{\eta,\mathbf{C}}} \delta_k^{\eta/2}.
	\end{align*}
	Recalling that by \eqref{eq:3} for any $N\in\mathbb{N}$ we have $\underline{H}(K,e,s)\subset \bigcup_{k\ge N}H_{e,k}$, we arrive at
	\begin{align*}
		\int_{S^1} \mathcal{H}_{\infty}^{1-\kappa+3\eta}(\underline{H}(K,e,s))\, d\mathcal{H}^{1}(e) & \leq \liminf_{N \to \infty} \sum_{k \geq N} \int_{S^1} \mathcal{H}_{\infty}^{1-\kappa+3\eta}(H_{e,k})\, d\mathcal{H}^{1}(e)\\
		& \lesssim_{\eta,\mathbf{C}} \liminf_{N \to \infty} \sum_{k\ge N}\delta_k^{\eta/2} = 0.
	\end{align*}
	Thus, for a.e. $e\in S^1$ we have $\mathcal{H}_{\infty}^{1-\kappa+3\eta}(\underline{H}(K,e,s))=0$. Since $1-\kappa+3\eta = t-s+5\eta$, letting $\eta\to 0$ gives \eqref{eq:slicing}.
	
	In the case of Ahlfors regular $K$, it is straightforward to modify the proof above to show the estimate for $\overline{H}(K,e,s)$ instead of $\underline{H}(K,e,s)$. In this case we have $|K|_{\delta}\lesssim \delta^{-t}$ for all $\delta$, and not just for some sequence $\delta_k\to 0$. At the same time, for every $z\in \overline{H}(K,e,s)$ we have a dyadic sequence $\delta_{k}(z)\to 0$ such that every $\delta_{k}(z)$-tube containing $\ell_{e,z}$ satisfies
	\begin{equation*}
		|\mathcal{I}(K_{\delta_{k}(z)}, \{T\})| \ge \delta_{k}(z)^{-s+\eta}\ge \delta_k(z)^{t-s+2\eta}|K|_{\delta_{k}(z)}.
	\end{equation*}
	This means that $T\in \calH_{e,\delta_k(z)}$. Hence, for every $N\in\mathbb{N}$
	\begin{equation*}
		\overline{H}(K,e,s)\subset K\cap\bigcup_{\delta\le 2^{-N}}\bigcup_{T\in\calH_{e,\delta}}T,
	\end{equation*}
	which corresponds to \eqref{eq:3}. After that, we may proceed exactly as before.
\end{proof}

\section{Continuous sharpness examples}

\begin{thm}\label{mainExample} For every $t \in (1,2]$ and $s \in [t - 1,\tfrac{1}{3}(2t - 1)]$ there exists a compact set $K \subset \R^{2}$ with the following properties. 
\begin{itemize}
\item $\Hd K = t$.
\item For $e \in S^{1}$, let $\mathcal{L}_{e}$ be the family of lines $\ell \subset \R^{2}$ parallel to $e$ such that $\Hd (K \cap \ell) \geq s$. Then $\Hd (K \cap (\cup \mathcal{L}_{e})) = \min\{2t - 3s,t\}$ for all $e \in S^{1}$.
\end{itemize} 
\end{thm}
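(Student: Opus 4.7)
The upper bound $\Hd(K \cap \bigcup \mathcal{L}_e) \leq \min\{2t-3s, t\}$ for $\mathcal{H}^{1}$-a.e.\ $e \in S^{1}$ is immediate from Proposition \ref{propPartial} once we know $\Hd K = t$, since $K \cap \bigcup \mathcal{L}_{e} \subset H(K,e,s)$. Extending the upper bound to \emph{every} $e \in S^{1}$ will be verified directly inside the discrete model described below, where the counting is manifestly symmetric in direction at each scale. The bulk of the work is the matching lower bound, uniform in $e \in S^{1}$.

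The plan is to build $K$ by iterating a two-scale discretized sharpness example at a rapidly shrinking sequence of scales. At a single pair of scales $\delta \ll \Delta = \delta^{\alpha}$, with $\alpha \in (0,1)$ tuned so that the Furstenberg budget realises the exponent $\min\{2t-3s,t\}$, I would construct a family $\mathcal{P} \subset \mathcal{D}_{\delta}$ as a disjoint union $\mathcal{P} = \mathcal{F} \cup \mathcal{G}$. The $\mathcal{F}$-part refines a discretized sharp Furstenberg set $\mathcal{F}_{0} \subset \mathcal{D}_{\Delta}$ (a Ren--Wang-type example meeting the threshold of Theorem \ref{t:renWang}) by placing a fresh random $t$-Ahlfors regular $(\delta,t)$-set inside each surviving $\Delta$-cube; the $\mathcal{G}$-part is a nearly uniformly $\delta^{t/2}$-separated $(\delta,t)$-set. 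For every direction $\xi \in \mathcal{D}_{\Delta}(S^{1})$, the Furstenberg structure of $\mathcal{F}_{0}$ supplies a family $\mathcal{T}_{\xi}^{\Delta}$ of $\Delta$-tubes parallel to $\xi$, each meeting $\mathcal{F}$ in a discretized $s$-set after refinement; the well-separated $\mathcal{G}$ then meets the union of these tubes in a set of discretized dimension $\min\{2t-3s,t\}$. Thus $\mathcal{F}$ witnesses the $s$-heaviness of individual tubes while $\mathcal{G}$ witnesses the target dimension of the union. The cases $t \leq \tfrac{3}{2}$ and $t > \tfrac{3}{2}$ use different Furstenberg exponents inside $\mathcal{F}_{0}$ (roughly $\tfrac{t}{3}$ versus $\tfrac{2t+\tau}{3}-1$ for $\tau$ slightly above $t$) and different complementary exponents for $\mathcal{G}$ (witnessing $t$ versus $3-t$), but the rest of the scheme is identical.

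I would then iterate self-similarly along a sequence $1 = \delta_{0} \gg \delta_{1} \gg \delta_{2} \gg \cdots$ with $\delta_{k+1} \leq \delta_{k}^{N_{k}}$ and $N_{k} \to \infty$: at step $k+1$, each $\delta_{k}$-cube surviving at step $k$ is replaced by a rescaled affine copy of the fixed discrete model at relative scale $\delta_{k+1}/\delta_{k}$. Writing $K_{n}$ for the stage-$n$ union and $K := \bigcap_{n} K_{n}$, the strong separation of the $\delta_{k}$ localises each step to an essentially disjoint range of scales, so the cardinality estimates telescope and a standard Frostman mass distribution equidistributing on the $\mathcal{P}$-family at every stage yields $\Hd K = t$. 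The lower bounds $\Hd(K \cap \ell) \geq s$ (for appropriate $\ell$) and $\Hd(K \cap \bigcup \mathcal{L}_{e}) \geq \min\{2t-3s,t\}$ are read off scale-by-scale: the heavy tubes at successive stages are chosen with nested cores, producing in the limit a line $\ell$ of direction $e$ along which the Furstenberg-refined structure of $\mathcal{F}$ survives, while the $\mathcal{G}$-part at every scale contributes additively to the dimension of the union.

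The principal obstacle is aligning the heavy-tube families across scales so that a single limit line $\ell$ actually emerges, and extending the construction from dyadic to arbitrary directions $e \in S^{1}$. For the alignment, I would fix the discrete Furstenberg model once and for all and apply the \emph{same} rescaled copy inside every surviving cube, so that inheritance of heaviness across scales is automatic; this self-similarity is what makes the iteration telescope cleanly. For arbitrary $e \in S^{1}$, a diagonal/compactness argument over sequences of dyadic approximants $\xi_{k} \to e$ at scale $\delta_{k}$ extracts a limit tube of direction $e$ whose core line $\ell$ inherits the required $\Hd(K \cap \ell) \geq s$ and, thanks to the uniform presence of $\mathcal{G}$ at every scale, contributes to a $\min\{2t-3s,t\}$-dimensional cover. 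This last step -- converting nested discrete-scale heavy tubes into a genuine continuous line $\ell$ and controlling the dimension along it, for every $e$ simultaneously -- is the delicate part; everything else is bookkeeping on top of the single-scale sketches.
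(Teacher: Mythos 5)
Your outline follows a genuinely different route than the paper's actual argument, and the route you've sketched has a real gap precisely where you flag the ``delicate part.''

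The paper does \emph{not} construct $K$ directly by iterating a two-scale discretized sharpness model. It works in the dual picture via Theorem \ref{mainExampleDual}: one constructs a compact point set $R \subset [0,1]^2$ and two compact line families $\mathcal{L}_F, \mathcal{L}_G \subset \mathcal{A}(2,1)$, then sets $\mathcal{L} := \mathcal{L}_F \cup \mathcal{L}_G$ and applies point-line duality. The two line families have deliberately \emph{opposite} branching phases across the alternating scale ranges $[\delta_n,\Delta_n]$ and $[\Delta_{n+1},\delta_n]$ (full $2$-dimensional branching versus $\tau$-dimensional branching), and this phase mismatch is what makes the dimension bookkeeping of the union come out to exactly $t$ rather than something larger. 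Your proposal does mention a union $\mathcal{F} \cup \mathcal{G}$ but you describe iterating ``the same rescaled copy inside every surviving cube,'' which does not capture this alternation; the paper instead applies the building block $\mathcal{P}_{\Delta_{n+1}/\delta_n}$ only between $\delta_n$ and $\Delta_{n+1}$, and takes the full grid $\mathcal{D}_{\delta_n}$ between $\Delta_n$ and $\delta_n$.

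The genuine gap is the passage from discrete heavy tubes to actual lines with $\Hd(K \cap \ell) \geq s$ holding for \emph{every} $e \in S^1$. You correctly identify this as the delicate part, but the compactness/diagonal sketch over dyadic approximants $\xi_k \to e$ is not a mechanism: it gives you a limit tube of direction $e$ but no control guaranteeing a full $s$-dimensional Cantor set of points along the resulting line, nor that the same line works across the whole scale hierarchy. The paper handles this by an entirely explicit device. The building block $\mathcal{P}_\Delta$ (a slightly sheared lattice, see Appendix \ref{s:constructBlock}) has small projections in a $\Delta^s$-separated direction set $\mathcal{E}_\Delta$ obtained from Farey fractions; the key property \nref{E} says that when the scale ratios $\bar\Delta_n = \Delta_n/\delta_{n-1}$ decay super-geometrically, the nested intersection $E = \bigcap_n (\cup\mathcal{E}_{\bar\Delta_n})$ has Hausdorff dimension exactly $s$. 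Then for any $z \in R$, one sets $\mathcal{L}_F(z) := \{\pi_e^{-1}\{\pi_e(z)\} : e \in E\}$, so $\Hd \mathcal{L}_F(z) \geq \Hd E = s$ by construction — no compactness argument needed, and it works verbatim for every $e$ via the explicit arithmetic structure of $E$. Meanwhile, Lemma \ref{lemma4} controls $|\pi_e(R_{\Delta_n})|_{\Delta_n}$ for $e \in \cup\mathcal{E}_{\Delta_n/\delta_{n-1}}$ to get the upper bound $\Hd \mathcal{L}_F \leq (3s+\tau)/2$, and the lower bound $\Hd \mathcal{L}_G(R_x) \geq \tau$ is obtained via the relative-set Frostman argument of Lemma \ref{lemma6} applied to a nested sequence of tube families, not by compactness. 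None of the random Ahlfors-regular refinement your proposal uses appears in the paper; the building blocks are deterministic lattices chosen precisely so that the projection and slicing properties can be verified exactly.

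In short: your sketch resembles a discretized-model approach, but the continuous construction the paper actually carries out is via point-line duality, a deterministic Diophantine direction set replacing the random refinements, and the opposite-phase branching device. The step you label as ``bookkeeping'' is in fact where the paper's real work and new ideas live, and your sketch does not supply a substitute for them.
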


In fact, our construction gives the following variant of Theorem \ref{mainExample}, from which Theorem \ref{mainExample} as stated follows by point-line duality. If $\mathcal{L}$ is a family of lines, and $A \subset \R^{2}$ is a set, we write $\mathcal{L}(A) := \{\ell \in \mathcal{L} : A \cap \ell \neq \emptyset\}$. If $A = \{z\}$, we abbreviate $\mathcal{L}(\{z\}) =: \mathcal{L}(z)$.

\begin{thm}\label{mainExampleDual} For every $t \in (1,2]$ and $s \in [t - 1,\tfrac{1}{3}(2t - 1)]$ there exists a compact line set $\mathcal{L} \subset \mathcal{A}(2,1)$ with the following properties.
\begin{itemize}
\item $\Hd \mathcal{L} = t$.
\item For $x \in [0,1]$, let 
\begin{displaymath} R_{x} := \{z \in \{x\} \times \R : \Hd \mathcal{L}(z) \geq s\}. \end{displaymath}
Then, $\Hd \mathcal{L}(R_{x}) = \min\{2t - 3s,t\}$.
\end{itemize}

\end{thm}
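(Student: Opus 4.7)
I first observe that Theorem \ref{mainExampleDual} is equivalent to Theorem \ref{mainExample} via the point-line duality $\mathbf{D}$: the compact line set $\mathcal{L}$ corresponds to a compact set $K\subset\R^2$ of the same Hausdorff dimension, and the set $R_x$ of $s$-rich points on the vertical line $\{x\}\times\R$ corresponds, via duality, to the family of $s$-heavy lines through $K$ in the direction dual to $x$. It therefore suffices to construct, for each admissible $(t,s)$, a compact $K\subset\R^2$ with $\Hd K=t$ such that for every $e\in S^1$ the union of $s$-heavy lines parallel to $e$ meets $K$ in a set of Hausdorff dimension exactly $\min\{2t-3s,t\}$. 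The upper bound is already provided by Proposition \ref{propPartial}, so the work lies in achieving the matching lower bound.

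\textbf{A single-scale building block.} At a scale pair $0<\delta\ll\Delta\ll 1$ related by $\Delta=\delta^{\beta(t,s)}$ for an exponent $\beta$ to be chosen, I would build a discrete $(\delta,t)$-set $\mathcal{P}=\mathcal{F}\cup\mathcal{G}\subset\mathcal{D}_\delta$ of cardinality $\sim\delta^{-t}$, combining two ingredients. The set $\mathcal{F}$ comes from a $(\Delta,t)$-sharpness example for a suitable Furstenberg problem (precisely the one where Theorem \ref{t:renWang} is tight at the parameters matching $(s,t)$), refined inside each $\Delta$-square to a random $(\delta,t)$-regular set; this guarantees, for every direction $e$ in the $\delta$-net of $S^1$, a family of $\delta$-tubes parallel to $e$ whose intersections with $\mathcal{P}$ contain $(\delta,s)$-sets, i.e.\ they are quantitatively $s$-heavy. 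The set $\mathcal{G}$ is a maximally $\delta^{t/2}$-separated set whose uniform distribution at the intermediate scale $\Delta$ ensures that the same heavy tubes in direction $e$ cover a $(\delta,\tau)$-set in $\mathcal{P}$ with $\tau=\min\{2t-3s,t\}$. This produces the desired lower bound at the discrete level in every direction of the net, with the two endpoint regimes $t\le\tfrac{3}{2}$ and $t>\tfrac{3}{2}$ realised by slightly different parameter choices of the underlying Furstenberg example.

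\textbf{Iteration to a continuous fractal.} To obtain the continuous set $K$, I would iterate the single-scale construction along a rapidly decreasing sequence of scale pairs $(\Delta_n,\delta_n)$ with $\delta_n\ll\Delta_{n+1}$, placing a rescaled copy of the $(n+1)$-st block inside each $\delta_n$-cube of the $n$-th level. Setting $K=\bigcap_n K_n$ produces a compact self-similar-like fractal. The identity $\Hd K=t$ follows from a Frostman measure built using the $(\delta_n,t)$-set property at every level, combined with the matching upper bound $|K|_{\delta_n}\lesssim\delta_n^{-t}$. For the heavy-lines lower bound, fix $e\in S^1$ and extract by compactness a sequence of nested heavy tubes $T_n$ parallel to $e$; the intersection yields a genuine line $\ell$ parallel to $e$, and the $(\delta_n,s)$-set structure of $\mathcal{P}_n\cap T_n$ promotes to a Frostman measure on $K\cap\ell$ of exponent $s-\varepsilon$, giving $\Hd(K\cap\ell)\ge s$. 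Summing over all nested heavy-tube sequences and constructing a Frostman measure on the union of the $(\delta_n,\tau)$-covered subsets at each level shows that the covered part of $K$ has Hausdorff dimension at least $\tau=\min\{2t-3s,t\}$.

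\textbf{Main obstacle.} The delicate point is that the conclusion must hold for \emph{every} $e\in S^1$, not merely for almost every direction. The discrete construction at scale $\delta_n$ naturally produces heavy tubes only for directions in a $\delta_n$-net, so one must arrange the multi-scale iteration coherently in the direction variable: heavy tubes at level $n+1$ in a direction $e$ must genuinely nest inside heavy tubes at level $n$ in (approximately) the same direction. A natural remedy is to fix in advance a countable dense sequence of directions and design the scale hierarchy so that each chosen direction is resolved starting from some specific level onwards, and then to extend to arbitrary $e\in S^1$ by continuity of the heaviness property in the limit. Upgrading the discrete covered-set bound to a true Hausdorff dimension lower bound on the covered part of $K$ is a secondary technical issue, but it is handled by standard Frostman mass distribution arguments once the nested structure is correctly in place.
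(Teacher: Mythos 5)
Your high-level decomposition $K=\mathcal{F}\cup\mathcal{G}$ -- one piece driving heaviness via a Furstenberg sharpness example refined by random regular sets, the other piece a well-separated grid that gets swept up by the heavy tubes -- matches the informal description the paper gives of its examples, and in fact closely mirrors a discrete construction the authors sketched and ultimately abandoned. However, I think there are two genuine gaps, and they are exactly the places where the paper's actual proof does something different.

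First, the ``every direction'' issue. You identify it as the main obstacle and propose to fix a countable dense set of directions, resolve each at some level onward, and then ``extend to arbitrary $e\in S^1$ by continuity of the heaviness property in the limit.'' There is no such continuity: $\Hd(K\cap\ell)$ is not upper or lower semicontinuous in $\ell$, and examples where the slice dimension jumps wildly across a dense set of directions are easy to produce. The paper sidesteps this entirely by working in the \emph{dual} picture (proving Theorem \ref{mainExampleDual} directly and deriving Theorem \ref{mainExample} from it rather than the other way around). After dualizing, ``heavy lines in every direction'' becomes ``the rich-point set $R$ must contain a lot on every vertical slice $\{x\}\times\R$,'' and this is enforced by building $R$ from a block $\mathcal{P}_\Delta$ whose property \nref{P1} guarantees a $(\Delta,\tau-1)$-set in \emph{every} vertical fiber -- a clean, verifiable discrete-level condition with no limiting argument needed. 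Your construction, staying in the primal picture, has no analogue of this, which is why the obstacle never really goes away.

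Second, the multi-scale iteration is more delicate than ``place a rescaled copy of the block in each $\delta_n$-cube.'' The two pieces $\mathcal{F}$ and $\mathcal{G}$ have incompatible structures at each scale pair, and naively recursing inside every $\delta_n$-cube does not tell you what to do with the separated-grid piece $\mathcal{G}$ at the next stage, nor how the heavy tubes at level $n+1$ are to nest coherently inside those at level $n$. The paper's fix is to make the branching of the three objects $R$, $\mathcal{L}_F$, $\mathcal{L}_G$ alternate \emph{in complementary ways} across the interleaved scale ladder $\Delta_0\gg\delta_0\gg\Delta_1\gg\delta_1\gg\cdots$: the set $R$ is $2$-dimensional between $\Delta_n$ and $\delta_n$ and $\tau$-dimensional between $\delta_n$ and $\Delta_{n+1}$, while $\mathcal{L}_G$ does the opposite, so that they mesh at every scale. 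Your proposal lumps the two behaviors into a single block and iterates that block, which is closer to the discrete sketch than to a workable continuous construction. Related to this, the ``quasi-nested'' property \nref{E} of the direction sets $\mathcal{E}_\Delta$ is also essential in the paper for making the direction set $E$ persist under intersection and have dimension $s$; you gesture at nesting heavy tubes but don't supply the arithmetic that makes the direction sets genuinely compatible between scales. Until the dual reformulation (or an equivalent) and the alternating-branching iteration are in place, the lower bound for every $e\in S^1$ remains unproved.
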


\subsection{The building block}\label{s:buildingBlock} The basic building block of our construction uses the notion of $(\delta,s)$-sets.
\begin{definition}[$(\delta,s,C)$-set]\label{def:deltaSSet} For $\delta \in 2^{-\N}$, $s \in [0,d]$, and $C > 0$, a non-empty bounded set $P \subset \R^{d}$ is called a \emph{$(\delta,s,C)$-set} if
	\begin{displaymath} |P \cap B(x,r)|_{r} \leq Cr^{s}|P|_{\delta}, \qquad x \in \R^{d}, \, r \in [\delta,1]. \end{displaymath}
	If $\mathcal{P}$ is a finite union of dyadic cubes (possibly of different side-lengths), we say that $\mathcal{P}$ is a $(\delta,s,C)$-set if the union $\cup \mathcal{P}$ is a $(\delta,s,C)$-set in the sense above. \end{definition}
It is useful to note that if $P$ is a  $(\delta,s,C)$-set, then $|P|_{\delta} \geq \delta^{-s}/C$. This follows by applying the defining inequality with $r := \delta$ and to any $B(x,r)$ intersecting $P$.

Let $\tau := \min\{2t - 3s,t\}$. The proof of Theorem \ref{mainExampleDual} is based on the existence of sets $\mathcal{P}_{\Delta} \subset \mathcal{D}_{\Delta}$, $\Delta \in 2^{-\N}$, with the following properties \nref{P1}-\nref{P2}.
\begin{itemize}
\item[(P1) \phantomsection \label{P1}] If $x \in [0,1]$, then $\mathcal{P}_{\Delta} \cap (\{x\} \times \R)$ contains a non-empty $(\Delta,\tau - 1,C)$-set. Note that $\tau - 1 \geq 0$ by the hypothesis $s < \tfrac{1}{3}(2t - 1)$. If $\tau = 1$, we are merely claiming here that $\mathcal{P}_{\Delta} \cap (\{x\} \times \R) \neq \emptyset$ for all $x \in [0,1]$.
\item[(P2) \phantomsection \label{P2}] There exists a $\Delta^{s}$-separated set $\mathcal{E}_{\Delta} \subset \mathcal{D}_{\Delta}(S^{1})$ such that $\Delta^{-s} \lesssim |\mathcal{E}_{\Delta}| \leq \Delta^{-s}$, and
\begin{displaymath} |\pi_{e}(\mathcal{P}_{\Delta})|_{\Delta} \lesssim \Delta^{-(s + \tau)/2}, \qquad e \in \cup \mathcal{E}_{\Delta}. \end{displaymath}
\end{itemize}
As a sanity check, note that $(s + \tau)/2 \leq (2t - 2s)/2 \leq 1$ since $s \geq t - 1$ by assumption. It may also be fun to know that $\mathcal{P}_{\Delta}$ is a $(\Delta,\tau,C)$-set, but this will not be explicitly needed. Further, we will need the following "quasi nested" property of the sets $\mathcal{E}_{\Delta}$:
\begin{itemize}
\item[(E) \phantomsection \label{E}] If the sequence $\{\bar{\Delta}_{n}\}_{n \in \N} \subset 2^{-\N}$ decays so rapidly that $\bar{\Delta}_{n} < \bar{\Delta}_{n - 1}^{n}$, then the set
\begin{equation}\label{defE} E := \bigcap_{n \in \N} (\cup \mathcal{E}_{\bar{\Delta}_{n}}) \end{equation}
satisfies $\Hd E = s$.
\end{itemize}
We suspect that the existence of $\mathcal{P}_{\Delta}$ and $\mathcal{E}_{\Delta}$ is "well-known", but since a precise reference was difficult to come by, we give the full details in Appendix \ref{s:constructBlock}.

\subsection{The general idea}\label{s:generalIdea} To prove Theorem \ref{mainExampleDual}, we will construct three objects simultaneously: a compact (Cantor-type) set $R \subset [0,1]^{2}$, and two compact line sets $\mathcal{L}_{F}$ and $\mathcal{L}_{G}$. These objects will satisfy the following properties:
\begin{itemize}
\item $\Hd \mathcal{L}_{F} \leq (3s + \tau)/2 \leq t$ and $\Hd \mathcal{L}_{G} \leq \tau \leq t$ (here still $\tau = \min\{2t - 3s,t\}$).
\item $\Hd \mathcal{L}_{F}(z) \geq s$ for all $z \in R$.
\item $\Hd \mathcal{L}_{G}(R \cap (\{x\} \times \R)) = \tau$ for all $x \in [0,1]$.
\end{itemize}
Once this has been accomplished, we define $\mathcal{L} := \mathcal{L}_{F} \cup \mathcal{L}_{G}$. Then $\Hd \mathcal{L} \leq t$,
\begin{displaymath} \Hd \mathcal{L}(z) \geq \Hd \mathcal{L}_{F}(z) \geq s, \qquad z \in R, \end{displaymath}
and
\begin{displaymath} \Hd \mathcal{L}(R_{x}) \geq \Hd \mathcal{L}_{G}(R \cap (\{x\} \times \R)) = \min\{2t - 3s,t\}, \qquad x \in [0,1]. \end{displaymath}
This will complete the proof of Theorem \ref{mainExampleDual}.

\begin{remark} Since $\mathcal{L} = \mathcal{L}_{F} \cup \mathcal{L}_{G}$, it may first seem that we can deduce the stronger bound $\Hd \mathcal{L} \leq \max\{(3s + \tau)/2,\tau\}$. However, the right hand side equals "$t$" in both possible cases $\tau = 2t - 3s$ and $\tau = t$.  \end{remark}

The construction of all the objects $R,\mathcal{L}_{F},\mathcal{L}_{G}$ will be based on a fixed but very rapidly decreasing "double" scale sequence $\{\Delta_{n},\delta_{n}\}$ of the form
\begin{displaymath} 1 =: \Delta_{0} \gg \delta_{0} \gg \Delta_{1} \gg \delta_{1} \gg \ldots > 0, \end{displaymath}
where $\delta_{n},\Delta_{n} \in 2^{-\N}$. It will always be crucial to choose $\delta_{n}$ much smaller than $\Delta_{n}$, and also $\Delta_{n + 1}$ much smaller than $\delta_{n}$. In fact, the only requirements will be that $\delta_{n} < \Delta_{n}^{2n}$ and $\Delta_{n + 1} < \delta_{n}^{2(n + 1)}$ for all $n \geq 0$. We now fix a sequence $\{\Delta_{n},\delta_{n}\}$ with these properties.

\subsection{The set $R$} We define the set $R \subset [0,1]^{2}$ by the following iterative procedure. We will have
\begin{displaymath} R = \bigcap_{n = 0}^{\infty} R_{\Delta_{n}}, \end{displaymath}
where $R_{\Delta_{n}}$ is the union of a finite family $\mathcal{R}_{\Delta_{n}}$ of closed dyadic $\Delta_{n}$-squares. Write $\mathcal{R}_{\Delta_{0}} := \{[0,1]^{2}\}$ and $R_{0} := [0,1]^{2}$. Assume that $R_{\Delta_{n}},\mathcal{R}_{\Delta_{n}}$ have already been constructed. Fix $\delta_{n} \in 2^{-\N}$ with $\delta_{n} < \Delta_{n}$, and let
\begin{equation}\label{form24} \mathcal{R}_{\delta_{n}} := \{Q \in \mathcal{D}_{\delta_{n}} : Q \subset R_{\Delta_n}\}. \end{equation}
Thus $\mathcal{R}_{\delta_{1}} = \mathcal{D}_{\delta_{1}}$. (In this section, the notation $\mathcal{D}_{\delta}$ will refer to closed dyadic sub-squares of $[0,1]^{2}$.) Next, fix $\Delta_{n + 1} \ll \delta_{n}$, and consider the "building block"
\begin{displaymath} \mathcal{P}_{n} := \mathcal{P}_{\Delta_{n + 1}/\delta_{n}}. \end{displaymath}
For each square $Q \in \mathcal{R}_{\delta_{n}} \subset \mathcal{D}_{\delta_{n}}$, let $\mathcal{P}_{Q}$ be a copy of $\mathcal{P}_{n}$ which has been rescaled by $\delta_{n}$ and then translated inside $Q$, thus 
\begin{displaymath} \mathcal{P}_{Q} := S_{Q}(\mathcal{P}_{n}) \subset \mathcal{D}_{\Delta_{n + 1}}(Q), \end{displaymath}
where $S_{Q}$ is the homothety taking $[0,1]^{2}$ to $Q$. We then define
\begin{equation}\label{defR} \mathcal{R}_{\Delta_{n + 1}} := \bigcup_{Q \in \mathcal{R}_{\delta_{n}}} \mathcal{P}_{Q} \subset \mathcal{D}_{\Delta_{n + 1}} \quad \text{and} \quad R_{\Delta_{n + 1}} := \cup \mathcal{R}_{\Delta_{n + 1}}. \end{equation}
This completes the definitions of the families $\mathcal{R}_{\Delta_{n}},\mathcal{R}_{\delta_{n}}$, and the set $R$.

Figure \ref{fig4} shows how the set $R_{\Delta_{n + 1}}$ might look inside a single square $Q \in \mathcal{R}_{\delta_{n - 1}}$. Note that first $Q$ is replaced by the "$\tau$-dimensional" set $\mathcal{P}_{\Delta_{n}/\delta_{n - 1}}$. Next there follows a period of "$2$-dimensional branching" between the scales $\Delta_{n} > \delta_{n}$, and finally another period of "$\tau$-dimensional branching" between the scales $\delta_{n} > \Delta_{n + 1}$.
\begin{figure}[h!]
\begin{center}
\begin{overpic}[scale = 0.8]{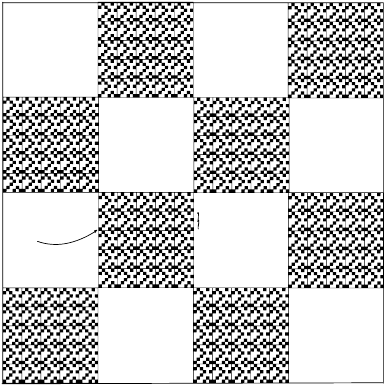}
\put(-15,50){\small{$\delta_{n - 1}$}}
\put(-30,90){\small{$Q \in \mathcal{R}_{\delta_{n - 1}}$}}
\put(52,86){\small{$\Delta_{n}$}}
\put(2,32){\small{$\Delta_{n + 1}$}}
\put(53,40){\small{$\delta_{n}$}}
\end{overpic}
\caption{The set $Q \cap R_{\Delta_{n + 1}}$ for $Q \in \mathcal{R}_{\delta_{n - 1}}$.}\label{fig4}
\end{center}
\end{figure}

\subsection{The line family $\mathcal{L}_{F}$} We will next define the line family $\mathcal{L}_{F}$ announced in Section \ref{s:generalIdea}. We will also verify all the stated properties of $\mathcal{L}_{F}$ immediately. The idea is that the set $R$ constructed above has small projections in many directions. The line family $\mathcal{L}_{F}$ will be defined as the family of all pre-image lines of all of these small projections.

\begin{lemma}\label{lemma4} Assuming that $\Delta_{n} < \delta_{n - 1}^{2n}$ for all $n \geq 1$, we have

\begin{displaymath} |\pi_{e}(R_{\Delta_{n}})|_{\Delta_{n}} \leq \Delta_{n}^{-(s + \tau)/2 - 1/n}, \qquad e \in \cup \mathcal{E}_{\Delta_{n}/\delta_{n - 1}}, \, n \geq 1. \end{displaymath} 
\end{lemma}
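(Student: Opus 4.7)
The plan is to decompose $R_{\Delta_n}$ into the pieces lying in each $\delta_{n-1}$-predecessor square, apply property \nref{P2} to each rescaled copy of the building block, and use the trivial bound $|\mathcal{R}_{\delta_{n-1}}| \leq \delta_{n-1}^{-2}$ for the number of such predecessors. The rapid scale decay $\Delta_n < \delta_{n-1}^{2n}$ will then absorb the resulting polynomial loss in $\delta_{n-1}$ into the $\Delta_n^{-1/n}$ slack on the right-hand side.

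By the recursive construction in \eqref{defR},
\begin{displaymath} R_{\Delta_n} = \bigcup_{Q \in \mathcal{R}_{\delta_{n-1}}} \mathcal{P}_Q, \qquad \mathcal{P}_Q = S_Q(\mathcal{P}_{\Delta_n/\delta_{n-1}}), \end{displaymath}
where $S_Q$ is the homothety of ratio $\delta_{n-1}$ mapping $[0,1]^2$ onto $Q$. A union bound gives
\begin{displaymath} |\pi_e(R_{\Delta_n})|_{\Delta_n} \leq \sum_{Q \in \mathcal{R}_{\delta_{n-1}}} |\pi_e(\mathcal{P}_Q)|_{\Delta_n}. \end{displaymath}
Since orthogonal projections commute with scaling and translation, $\pi_e(\mathcal{P}_Q)$ is an affine copy of $\pi_e(\mathcal{P}_{\Delta_n/\delta_{n-1}})$ with contraction ratio $\delta_{n-1}$, so its $\Delta_n$-covering number is comparable to the $(\Delta_n/\delta_{n-1})$-covering number of $\pi_e(\mathcal{P}_{\Delta_n/\delta_{n-1}})$. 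Applying \nref{P2} at scale $\Delta = \Delta_n/\delta_{n-1}$ yields
\begin{displaymath} |\pi_e(\mathcal{P}_Q)|_{\Delta_n} \lesssim (\Delta_n/\delta_{n-1})^{-(s+\tau)/2}, \qquad e \in \cup \mathcal{E}_{\Delta_n/\delta_{n-1}}. \end{displaymath}
Combined with $|\mathcal{R}_{\delta_{n-1}}| \leq \delta_{n-1}^{-2}$, one obtains
\begin{displaymath} |\pi_e(R_{\Delta_n})|_{\Delta_n} \leq C \delta_{n-1}^{-2}(\Delta_n/\delta_{n-1})^{-(s+\tau)/2} = C \Delta_n^{-(s+\tau)/2} \delta_{n-1}^{-\eta}, \end{displaymath}
where $\eta := 2 - (s+\tau)/2$.

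It only remains to verify the inequality $C \delta_{n-1}^{-\eta} \leq \Delta_n^{-1/n}$. Since $s \geq t-1$ and $\tau = \min\{2t - 3s, t\} \geq 1$ (from $s \leq (2t-1)/3$, which gives $2t-3s \geq 1$), one has $(s+\tau)/2 > \tfrac{1}{2}$, hence $\eta < \tfrac{3}{2} < 2$. The scale hypothesis $\Delta_n < \delta_{n-1}^{2n}$ gives $\Delta_n^{-1/n} > \delta_{n-1}^{-2}$, and for $\delta_{n-1}$ small enough, permitted by the freedom in choosing the scale sequence, $C \delta_{n-1}^{-\eta} \leq \delta_{n-1}^{-2} \leq \Delta_n^{-1/n}$, completing the proof. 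The main (and essentially only) obstacle is confirming the strict inequality $\eta < 2$: a naive attempt of the same argument with $s = 0$ would land exactly at $\eta = 2$ with no slack, and this issue is handled here by the standing assumption $s \geq t-1 > 0$.
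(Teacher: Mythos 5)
Your proof is correct and follows the paper's argument: decompose $R_{\Delta_n}$ into the rescaled building blocks $\mathcal{P}_Q$ over $Q \in \mathcal{R}_{\delta_{n-1}}$, apply \nref{P2}, count $|\mathcal{R}_{\delta_{n-1}}| \leq \delta_{n-1}^{-2}$, and absorb the loss using $\Delta_n < \delta_{n-1}^{2n}$. One small inaccuracy in your closing remark: taking $s=0$ would \emph{not} land at $\eta = 2$, since $\tau \geq 1$ already forces $\eta = 2 - (s+\tau)/2 \leq \tfrac{3}{2}$; the slack needed to absorb constants comes from $s+\tau > 0$ (equivalently from the slack $\delta_{n-1}^{(s+\tau)/2}$ between $(\Delta_n/\delta_{n-1})^{-(s+\tau)/2}$ and $\Delta_n^{-(s+\tau)/2}$), which is guaranteed here by $\tau \geq 1$ alone, not specifically by $s > 0$.
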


\begin{proof} Fix $n \geq 1$. Note that $\pi_{e}(R_{\Delta_{n}})$ equals the union of the projections $\pi_{e}(\mathcal{P}_{Q})$ for $Q \in \mathcal{R}_{\delta_{n - 1}}$. On the other hand, by \nref{P2} we have
\begin{displaymath} |\pi_{e}(\mathcal{P}_{Q})|_{\Delta_{n}} = |\pi_{e}(\mathcal{P}_{\Delta_{n}/\delta_{n - 1}})|_{\Delta_{n}/\delta_{n - 1}} \leq \Delta_{n}^{-(s + \tau)/2}, \quad e \in \cup \mathcal{E}_{\Delta_{n}/\delta_{n - 1}}.  \end{displaymath} 
Since $\delta_{n - 1}^{-2} \leq \Delta_{n}^{-1/n}$ by assumption, it follows from the above, and $|\mathcal{R}_{\delta_{n - 1}}| \leq \delta_{n - 1}^{-2}$, that
\begin{displaymath} |\pi_{e}(R_{\Delta_{n}})|_{\Delta_{n}} \leq \delta_{n - 1}^{-2} \cdot \Delta_{n}^{-(s + \tau)/2} \leq \Delta_{n}^{-(s + \tau)/2 - 1/n}, \qquad e \in \cup \mathcal{E}_{\Delta_{n}/\delta_{n - 1}}. \end{displaymath}
This completes the proof of the lemma. \end{proof}

We now define
\begin{displaymath} E := \bigcap_{n = 1}^{\infty} (\cup \mathcal{E}_{\Delta_{n}/\delta_{n - 1}}) \subset S^{1}, \end{displaymath}
and the line family 
\begin{displaymath} \mathcal{L}_{F} := \bigcup_{e \in E} \mathcal{L}_{F}(e) := \bigcup_{e \in E} \{\pi_{e}^{-1}\{\pi_{e}(z)\} : z \in R\}. \end{displaymath} 
\begin{proposition} Assuming that $\Delta_{n} \leq \delta_{n - 1}^{2n}$ for $n \geq 1$, we have
\begin{displaymath} \Hd \mathcal{L}_{F} \leq (3s + \tau)/2. \end{displaymath} 
\end{proposition}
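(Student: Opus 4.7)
The plan is to bound $\Hd \mathcal{L}_{F}$ by exhibiting, for each large integer $n$, an explicit covering of $\mathcal{L}_{F}$ in $\mathcal{A}(2,1)$ at the scale $r_{n} := \Delta_{n}/\delta_{n - 1}$. I would parameterize each $\ell \in \mathcal{L}_{F}$ by the pair $(e,a) \in S^{1} \times \R$, where $e \in E$ is its perpendicular direction and $a = \pi_{e}(z)$ is its signed distance from the origin, for some $z \in R$. Since $R \subset [0,1]^{2}$, the map $(e,a) \mapsto \pi_{e}^{-1}\{a\}$ is Lipschitz into $\mathcal{A}(2,1)$ with the displayed metric. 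Consequently, producing a cover of $\{(e,a) : e \in E, \, a \in \pi_{e}(R)\} \subset S^{1} \times \R$ suffices.

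To build the cover at scale $r_{n}$, I would use the two available ingredients in tandem. For the direction component: since $E \subset \cup \mathcal{E}_{r_{n}}$ by definition, property \nref{P2} gives $|\mathcal{E}_{r_{n}}| \leq r_{n}^{-s}$, so $E$ is covered by $\leq r_{n}^{-s}$ arcs of length $r_{n}$. For the intercept component: for each representative $e_{j} \in \mathcal{E}_{r_{n}}$, Lemma \ref{lemma4} gives $|\pi_{e_{j}}(R_{\Delta_{n}})|_{\Delta_{n}} \leq \Delta_{n}^{-(s + \tau)/2 - 1/n}$, which coarsened to scale $r_{n} \geq \Delta_{n}$ furnishes at most $\Delta_{n}^{-(s + \tau)/2 - 1/n}$ intervals of length $r_{n}$ covering $\pi_{e_{j}}(R)$. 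A small thickening then handles the fact that for $e$ in an $r_{n}$-arc around $e_{j}$, the intercept $\pi_{e}(z)$ differs from $\pi_{e_{j}}(z)$ by only $O(r_{n})$, since $R$ is bounded. Multiplying, $\mathcal{L}_{F}$ is covered at scale $r_{n}$ by $\lesssim r_{n}^{-s} \cdot \Delta_{n}^{-(s + \tau)/2 - 1/n}$ balls in $\mathcal{A}(2,1)$.

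Taking $\sigma > (3s + \tau)/2$, the $\sigma$-Hausdorff content at level $r_{n}$ is bounded by
\begin{displaymath} r_{n}^{\sigma} \cdot r_{n}^{-s} \cdot \Delta_{n}^{-(s + \tau)/2 - 1/n} = \Delta_{n}^{\sigma - (3s + \tau)/2 - 1/n} \cdot \delta_{n - 1}^{s - \sigma}. \end{displaymath}
The key delicacy -- and the only real obstacle -- is that the factor $\delta_{n - 1}^{s - \sigma}$ is \emph{large} (since $(3s + \tau)/2 > s$). The point is that this blow-up is beaten by the hypothesis $\Delta_{n} < \delta_{n - 1}^{2n}$: substituting, the effective exponent of $\delta_{n - 1}$ is at least $2n(\sigma - (3s + \tau)/2 - 1/n) - (\sigma - s)$, which tends to $+\infty$ as $n \to \infty$ once $\sigma > (3s + \tau)/2$. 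Thus the content tends to $0$, yielding $\mathcal{H}^{\sigma}(\mathcal{L}_{F}) = 0$ for every $\sigma > (3s + \tau)/2$ and hence $\Hd \mathcal{L}_{F} \leq (3s + \tau)/2$. The rapid decay of $\{\Delta_{n},\delta_{n}\}$ was built in precisely so that the coarse direction scale $r_{n}$ and the fine projection scale $\Delta_{n}$ can be reconciled in this two-scale product cover.
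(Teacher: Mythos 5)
Your proof is correct, and it takes a mildly different route from the paper's.

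The paper works at the finer scale $\Delta_n$ throughout: it refines the $r_n$-arcs of $\mathcal{E}_{\Delta_n/\delta_{n-1}}$ into $\Delta_n$-arcs (paying a factor $\delta_{n-1}^{-1}$, which the hypothesis $\Delta_n < \delta_{n-1}^{2n}$ absorbs into $\Delta_n^{-1/n}$), pairs each arc with the $\Delta_n$-cover of $\pi_e(R_{\Delta_n})$ from Lemma~\ref{lemma4}, and concludes $|\mathcal{L}_F|_{\Delta_n}\le\Delta_n^{-(3s+\tau)/2-2/n}$, hence $\underline{\dim}_{\mathrm{B}}\mathcal{L}_F\le(3s+\tau)/2$. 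You instead work at the coarser scale $r_n=\Delta_n/\delta_{n-1}$: you keep the arcs at their native length, coarsen the $\Delta_n$-interval cover of $\pi_{e_j}(R)$ to length $r_n$ (deliberately wastefully, keeping the count $\Delta_n^{-(s+\tau)/2-1/n}$), and estimate Hausdorff content directly. The cover is then mismatched in scale, and the resulting content $\Delta_n^{\sigma-(3s+\tau)/2-1/n}\delta_{n-1}^{s-\sigma}$ carries the blow-up factor $\delta_{n-1}^{s-\sigma}$; this is exactly what the rapid decay $\Delta_n<\delta_{n-1}^{2n}$ is there to kill, as you correctly observe. Both arguments use the same two ingredients (\nref{P2} and Lemma~\ref{lemma4}) and the same rapid-decay hypothesis; the paper's bookkeeping yields the marginally stronger box-dimension conclusion "for free", while yours is more explicit about where the rapid decay enters. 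Either route is fine for the stated proposition.

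One small point worth double-checking when you write this up: the "balls" in your product cover of $S^1\times\R$ should be thickened by an absolute constant before pushing forward to $\mathcal{A}(2,1)$, both because a direction $e$ in an $r_n$-arc shifts $\pi_e(z)$ by $O(r_n)$ relative to the representative $e_j$ (as you note), and because the parametrization $(e,a)\mapsto e^\perp+ae$ is Lipschitz but not an isometry onto the paper's metric on $\mathcal{A}(2,1)$. Neither affects the exponent, only the implied constant.
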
 

\begin{proof} Fix $n \geq 1$ and recall from \nref{P2} that
\begin{displaymath} |E|_{\Delta_{n}} \leq \delta_{n - 1}^{-1} \cdot |\mathcal{E}_{\Delta_{n}/\delta_{n - 1}}| \lesssim \delta_{n - 1}^{-1} \cdot (\Delta_{n}/\delta_{n - 1})^{-s} \leq \Delta_{n}^{-s - 1/n}, \qquad n \in \N. \end{displaymath} 
On the other hand, for each $e \in E \subset \cup \mathcal{E}_{\Delta_{n}/\delta_{n - 1}}$, the lines of $\mathcal{L}_{F}(e)$ can be covered by $\leq \Delta_{n}^{-(s + \tau)/2 - 1/n}$ tubes of width $\Delta_{n}$ by Lemma \ref{lemma4} (and one can use the same $\Delta_{n}$-tubes to cover all lines $\mathcal{L}_F(e)$ whose angular components are within $\leq \Delta_{n}$). Consequently $|\mathcal{L}_{F}|_{\Delta_{n}} \leq \Delta_{n}^{-(3s + \tau)/2 - 2/n}$. This implies that $\Hd \mathcal{L}_{F} \leq \underline{\dim}_{\mathrm{B}} \mathcal{L}_{F} \leq (3s + \tau)/2$. \end{proof}

The following proposition is immediate from the definitions:
\begin{proposition} We have
\begin{displaymath} \Hd \mathcal{L}_{F}(z) \geq s, \qquad z \in R. \end{displaymath} \end{proposition}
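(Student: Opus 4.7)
The plan is to reduce $\Hd \mathcal{L}_{F}(z) \geq s$ to the inequality $\Hd E \geq s$ by exhibiting a bi-Lipschitz injection $E \hookrightarrow \mathcal{L}_{F}(z)$, and then to quote property \nref{E}.

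Fix $z \in R \subset [0,1]^{2}$ and define $\Phi_{z} \colon E \to \mathcal{L}_{F}$ by $\Phi_{z}(e) := \pi_{e}^{-1}\{\pi_{e}(z)\}$, i.e.\ the line through $z$ perpendicular to $e$. By the very definition of $\mathcal{L}_{F}(e)$, we have $\Phi_{z}(e) \in \mathcal{L}_{F}(e) \subset \mathcal{L}_{F}$, and evidently $z \in \Phi_{z}(e)$, so in fact $\Phi_{z}(e) \in \mathcal{L}_{F}(z)$ for every $e \in E$.

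The next step is to verify that $\Phi_{z}$ is bi-Lipschitz, modulo the identification $e \sim -e$ (both directions parameterise the same line). The line $\Phi_{z}(e)$ has direction $L_{e} = \mathrm{span}(e^{\perp})$ and closest-to-origin point $a_{e} = \pi_{e}(z) \in L_{e}^{\perp} = \mathrm{span}(e)$, so the explicit metric on $\mathcal{A}(2,1)$ gives
\[ d_{\mathcal{A}(2,1)}(\Phi_{z}(e),\Phi_{z}(e')) = \|\pi_{e^{\perp}} - \pi_{e'^{\perp}}\|_{\mathrm{op}} + |\pi_{e}(z) - \pi_{e'}(z)|. \]
Since $\pi_{e^{\perp}} = I - \pi_{e}$ and $e \mapsto \pi_{e}$ is bi-Lipschitz on any half of $S^{1}$, the first term is comparable to $|e - e'|$; the second is dominated by $\|\pi_{e} - \pi_{e'}\|_{\mathrm{op}} \cdot |z|$, hence also $\lesssim |e - e'|$. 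The Lipschitz constants depend only on $|z| \leq \sqrt{2}$. Consequently $\Hd \Phi_{z}(E) = \Hd E$, and therefore $\Hd \mathcal{L}_{F}(z) \geq \Hd E$.

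Finally, property \nref{E} gives $\Hd E = s$ whenever the sequence $\bar{\Delta}_{n} := \Delta_{n}/\delta_{n - 1}$ satisfies the rapid-decay hypothesis $\bar{\Delta}_{n} < \bar{\Delta}_{n - 1}^{n}$. This is achieved by imposing one further smallness requirement on $\{\Delta_{n},\delta_{n}\}$ during the construction of Section \ref{s:generalIdea}, which does not interfere with the conditions used in Lemma \ref{lemma4} or in the definition of $R$: all our constraints are of the form ``the next scale is smaller than a fixed power of the previous scale''. The expected main obstacle is purely bookkeeping — simultaneously arranging all the decay conditions (for $R$, for Lemma \ref{lemma4}, for the rapid decay in \nref{E}) with one single sequence. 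Since they can be chained one after the other, this is routine.
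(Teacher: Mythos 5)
Your proposal is correct and follows essentially the same route as the paper: lower-bound $\Hd \mathcal{L}_{F}(z)$ by $\Hd E$, then invoke property \nref{E} after checking the rapid-decay hypothesis for $\bar{\Delta}_{n} = \Delta_{n}/\delta_{n - 1}$. You spell out the first step more explicitly than the paper does (the paper just says ``evidently''): the bi-Lipschitz computation for $e \mapsto \pi_{e}^{-1}\{\pi_{e}(z)\}$ in the metric $d_{\mathcal{A}(2,1)}$ is correct, noting that $E$ sits in a bounded slope range so there is no issue with the $e \sim -e$ identification.

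The one place you are slightly off is the verification of the decay hypothesis in \nref{E}. You propose imposing a \emph{further} smallness constraint on $\{\Delta_{n},\delta_{n}\}$, and then argue this is harmless bookkeeping. That would indeed work, but it is unnecessary: the scale sequence was already fixed once and for all in Section \ref{s:generalIdea}, subject only to $\delta_{n} < \Delta_{n}^{2n}$ and $\Delta_{n + 1} < \delta_{n}^{2(n+1)}$, and those two assumptions already imply $\bar{\Delta}_{n} < \bar{\Delta}_{n-1}^{n}$. Concretely, $\Delta_{n} < \delta_{n-1}^{2n} = \delta_{n-1}^{n}\delta_{n-1}^{n} \leq \Delta_{n-1}^{n}\delta_{n-1}^{n} \leq \Delta_{n-1}^{n}\bigl(\delta_{n-1}/\delta_{n-2}\bigr)^{n}$, which is (a strengthening of) $\bar{\Delta}_{n} < \bar{\Delta}_{n-1}^{n}$. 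So the decay needed for \nref{E} is already guaranteed by the standing assumptions; no retro-active modification of the construction is called for. This is a missed observation rather than an error, but it matters because all three objects $R, \mathcal{L}_{F}, \mathcal{L}_{G}$ are built on the \emph{same} fixed sequence, and it is cleaner to verify that the one set of constraints suffices than to keep piling on new ones.
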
 

\begin{proof} Evidently $\Hd \mathcal{L}_{F}(z) \geq \Hd E$. Moreover $\Hd E = s$ by \nref{E}, provided that the ratios $\bar{\Delta}_{n} = \Delta_{n}/\delta_{n - 1}$ decay so rapidly that $\bar{\Delta}_{n} < \bar{\Delta}_{n - 1}^{n}$, or equivalently
\begin{displaymath} \Delta_{n} < \Delta_{n - 1}^{n} \left(\frac{\delta_{n - 1}}{\delta_{n - 2}}\right)^{n}, \qquad n \geq 1. \end{displaymath}
This follows from our assumptions: $\Delta_{n} < \delta_{n - 1}^{2n} \leq \Delta_{n - 1}^{n}\delta_{n - 1}^{n}$. \end{proof} 

\subsection{The line family $\mathcal{L}_{G}$} We then define the line family $\mathcal{L}_{G}$ announced in Section \ref{s:generalIdea}. We restate here the required properties of $\mathcal{L}_{G}$:
\begin{itemize}
\item[(G1) \phantomsection \label{G1}] $\Hd \mathcal{L}_{G} = \tau = \min\{2t - 3s,t\}$.
\item[(G2) \phantomsection \label{G2}] $\Hd \mathcal{L}_{G}(R_{x}) = \tau$ for all $x \in [0,1]$. Here and below $R_{x} := R \cap (\{x\} \times \R)$.
\end{itemize}

A slightly informal description of $\mathcal{L}_{G}$ is the following: $\mathcal{L}_{G}$ is a uniform set with $\tau$-dimensional branching between scales $\Delta_{n} > \delta_{n}$ and $2$-dimensional branching between scales $\delta_{n} > \Delta_{n + 1}$. Therefore $\mathcal{L}_{G}$ has exactly the opposite features as the set $R$, which instead had $2$-dimensional branching between scales $\Delta_{n} > \delta_{n}$ and $\tau$-dimensional branching between scales $\delta_{n} > \Delta_{n + 1}$.

We now give a more precise definition. We define $\mathcal{L}_{G}$ as the set of lines which are contained in the intersection of the following nested sequence of dyadic tube families (recall Notation \ref{not1}). First, recall that $\Delta_{0} = 1$, and let $\mathcal{T}_{\Delta_{0}} \subset \mathcal{T}^{4}$ consist of all the dyadic $4$-tubes intersecting $[0,1]^{2}$. 

Next, assume that $\mathcal{T}_{\Delta_{n}} \subset \mathcal{T}^{4\Delta_{n}}$ has already been constructed for some $n \geq 0$. For every $\mathbf{T} \in \mathcal{T}_{\Delta_{n}}$ we define $\mathcal{T}_{\delta_{n}}(\mathbf{T}) \subset \mathcal{T}^{4\delta_{n}}$ to be a maximally separated set of dyadic $(4\delta_{n})$-tubes contained in $\mathbf{T}$ of cardinality $|\mathcal{T}_{\delta_{n}}(\mathbf{T})| = (\Delta_{n}/\delta_{n})^{\tau}$. In fact, the property of $\mathcal{T}_{\delta_{n}}(\mathbf{T})$ we really need is this: if the tubes in $\mathcal{T}_{\delta_{n}}(\mathbf{T})$ are rescaled by $(4\Delta_{n})^{-1}$, and the resulting family (in the parameter space $[0,1]^{2}$) is restricted to a square $Q \subset [0,1]^{2}$ of side-length $\tfrac{1}{100}$, then the remaining family of dyadic $(\delta_{n}/\Delta_{n})$-tubes is a $(\delta_{n}/\Delta_{n},\tau,C)$-set for an absolute constant $C > 0$. So, informally speaking, we need $\mathcal{T}_{\delta_{n}}(\mathbf{T})$ to be a moderately well-distributed $(\delta_{n}/\Delta_{n},\tau)$-set of tubes, modulo rescaling by $(4\Delta_{n})^{-1}$.

Then, we set
\begin{equation}\label{form35} \mathcal{T}_{\delta_{n}} := \bigcup_{\mathbf{T} \in \mathcal{T}_{\Delta_{n}}} \mathcal{T}_{\delta_{n}}(\mathbf{T}). \end{equation}
We note that $|\mathcal{T}_{\delta_{n}}| \leq \Delta_{n}^{-2} \cdot \delta_{n}^{-\tau} \leq \delta_{n}^{-\tau - 1/n}$ for $n \geq 0$, recalling that $\delta_{n} \leq \Delta_{n}^{2n}$.

 Finally, for each $T \in \mathcal{T}_{\delta_{n}}$, we define $\mathcal{T}_{\Delta_{n + 1}}(T)$ to consist of all the dyadic $(4\Delta_{n + 1})$-tubes contained in $T$, and we set
\begin{equation}\label{form36} \mathcal{T}_{\Delta_{n + 1}} := \bigcup_{T \in \mathcal{T}_{\delta_{n}}} \mathcal{T}_{\Delta_{n + 1}}(T). \end{equation}
We then define
\begin{displaymath} \mathcal{L}_{G} := \bigcap_{n \in \N} (\cup \mathcal{T}_{\delta_{n}}) = \bigcap_{n \in \N} (\cup \mathcal{T}_{\Delta_{n}}). \end{displaymath}
Property \nref{G1} is rather clear by construction:
\begin{proposition} $\Hd \mathcal{L}_{G} \leq \tau$. \end{proposition}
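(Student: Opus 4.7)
The plan is to bound $\Hd \calL_G$ by the (upper) box-counting dimension, using the covering of $\calL_G$ by the tube families $\calT_{\delta_n}$ and interpreting these tubes as balls in the line space $\mathcal{A}(2,1)$.

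First I would observe that each $T \in \calT_{\delta_n}$ is a dyadic $(4\delta_n)$-tube, hence its set of core lines, viewed as a subset of $\mathcal{A}(2,1)$, is contained in a ball of radius $\lesssim \delta_n$ in the metric $d_{\mathcal{A}(2,1)}$ recalled in the introduction (the angle and offset each vary by at most $\sim \delta_n$ inside a dyadic $\delta_n$-tube). Since $\calL_G \subset \cup \calT_{\delta_n}$ by construction, this yields a covering of $\calL_G$ by $|\calT_{\delta_n}|$ balls of radius $\lesssim \delta_n$ in $\mathcal{A}(2,1)$.

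Next I would use the cardinality bound recorded right after \eqref{form35}, namely $|\calT_{\delta_n}| \leq \delta_n^{-\tau - 1/n}$, which followed from the trivial estimate $|\calT_{\Delta_n}| \leq \Delta_n^{-2}$ combined with $|\calT_{\delta_n}(\mathbf{T})| = (\Delta_n/\delta_n)^{\tau}$ and the fast-decay hypothesis $\delta_n \leq \Delta_n^{2n}$. Fix $\epsilon > 0$ and pick $n_0$ so large that $1/n < \epsilon/2$ whenever $n \geq n_0$. Then for all $n \geq n_0$,
\begin{displaymath}
\mathcal{H}^{\tau + \epsilon}_{\infty}(\calL_G) \lesssim |\calT_{\delta_n}| \cdot \delta_n^{\tau + \epsilon} \leq \delta_n^{\epsilon - 1/n} \leq \delta_n^{\epsilon/2}.
\end{displaymath}
Letting $n \to \infty$ forces $\mathcal{H}^{\tau + \epsilon}_{\infty}(\calL_G) = 0$, and since $\epsilon > 0$ was arbitrary we conclude $\Hd \calL_G \leq \tau$.

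There is no real obstacle here; the only thing to verify carefully is the first step, that a dyadic $\delta_n$-tube corresponds (via the point-line duality of Notation \ref{not1}) to a set of diameter $\lesssim \delta_n$ in the line metric $d_{\mathcal{A}(2,1)}$. This is immediate from the explicit formula for $d_{\mathcal{A}(2,1)}$ and the fact that $T = \cup \mathbf{D}(p_T)$ for some $p_T \in \mathcal{D}_{\delta_n}$, whose points $(a,b)$ parametrise slopes and intercepts varying by at most $\delta_n$.
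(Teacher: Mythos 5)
Your proof is correct and takes essentially the same route as the paper: cover $\mathcal{L}_{G}$ by the tube families $\mathcal{T}_{\delta_{n}}$, use the cardinality bound $|\mathcal{T}_{\delta_{n}}|\leq\delta_{n}^{-\tau-1/n}$, and conclude that the (lower) box dimension, hence the Hausdorff dimension, is at most $\tau$. You are a bit more explicit than the paper in checking that a dyadic $\delta_{n}$-tube corresponds to a ball of radius $\lesssim\delta_{n}$ in $\mathcal{A}(2,1)$ and in writing out the Hausdorff content estimate, but these are just the details the paper leaves implicit.
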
 

\begin{proof} Note that all the lines in $\mathcal{L}_{G}$ can be covered by the dyadic tubes in $\mathcal{T}_{\delta_{n}}$, $n \geq 0$. Since $|\mathcal{T}_{\delta_{n}}| \leq \delta_{n}^{-\tau - 1/n}$ for $n \geq 0$, it follows that $\Hd \mathcal{L}_{G} \leq \underline{\dim}_{\mathrm{B}} \mathcal{L} \leq \tau$. \end{proof}

To understand the subsets $\mathcal{L}_{G}(R_{x}) \subset \mathcal{L}_{G}$, and to prove \nref{G2}, we start by setting up some notation. We write $L_{x} := \{x\} \times \R$, and identifying $L_{x} \cong \R$, further
\begin{displaymath} \mathcal{I}_{\delta_{n}}(x) := \{I \in \mathcal{D}_{\delta_{n}}(L_{x}) : I \subset R_{\delta_{n}}\} \quad \text{and} \quad \mathcal{I}_{\Delta_{n}}(x) := \{I \in \mathcal{D}_{\Delta_{n}}(L_{x}) : I \subset R_{\Delta_{n}}\}. \end{displaymath}
We agree that the dyadic intervals above are closed (and recall also that $R_{\delta_{n}},R_{\Delta_{n}}$ are defined as unions of closed dyadic squares).

For $\mathbf{I} \in \mathcal{I}_{\Delta_{n}}(x)$, let $\mathcal{I}_{\delta_{n}}(\mathbf{I}) := \{I \in \mathcal{I}_{\delta_{n}}(x) : I \subset \mathbf{I}\}$ and similarly for $I \in \mathcal{I}_{\delta_{n}}(x)$, let $\mathcal{I}_{\Delta_{n + 1}}(I) := \{\mathbf{I} \in \mathcal{I}_{\Delta_{n + 1}}(x) : \mathbf{I} \subset \mathcal{I}\}$ (we suppress "$x$" from the notation for simplicity). Now,
\begin{equation}\label{form33} \mathcal{I}_{\delta_{n}}(\mathbf{I}) = \mathcal{D}_{\delta_{n}}(\mathbf{I}), \qquad \mathbf{I} \in \mathcal{I}_{\Delta_{n}}(x), \end{equation}
This is because the set $\mathcal{R}_{\delta_{n}}$ consisted of all the $\delta_{n}$-squares contained in $R_{\Delta_{n}}$.

By similar reasoning,
\begin{equation}\label{form37} |\mathcal{I}_{\Delta_{n + 1}}(I)| \sim (\delta_{n}/\Delta_{n + 1})^{1 - \tau}, \qquad I \in \mathcal{I}_{\delta_{n}}(x). \end{equation}
This is a direct consequence of \nref{P1}, and the definition of $\mathcal{R}_{\Delta_{n + 1}}$ as a union of the sets $\mathcal{P}_{Q}$. In fact, we can more precisely say that the $\delta_{n}^{-1}$-dilation of the family $\mathcal{I}_{\Delta_{n + 1}}(I)$ contains a non-empty $(\Delta_{n + 1}/\delta_{n},\tau - 1,C)$-set for all $I \in \mathcal{I}_{\delta_{n}}$. As a minor technical point, we will denote by $\mathcal{I}_{\Delta_{n + 1}}'(I)$ the subset of $\mathcal{I}_{\Delta_{n + 1}}(I)$ whose $\delta_{n}^{-1}$-dilation is a $(\Delta_{n + 1}/\delta_{n},\tau - 1,C)$-set (then \eqref{form37} remains true for $\mathcal{I}_{\Delta_{n + 1}}'(I)$).

We then arrive at the key property \nref{G2} of the line family $\mathcal{L}_{G}$.

\begin{proposition}\label{prop2} We have
\begin{displaymath} \Hd \mathcal{L}_{G}(R_{x}) = \tau, \qquad x \in [0,1]. \end{displaymath}
\end{proposition}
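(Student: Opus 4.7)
The upper bound $\Hd \mathcal{L}_G(R_x) \leq \tau$ is immediate from the inclusion $\mathcal{L}_G(R_x) \subset \mathcal{L}_G$ together with the already established $\Hd \mathcal{L}_G \leq \tau$. The plan is to prove the matching lower bound $\Hd \mathcal{L}_G(R_x) \geq \tau$ by constructing a Frostman-type measure of exponent $\tau$ supported on $\mathcal{L}_G(R_x)$.

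The plan is a mass-distribution construction along the double scale sequence $\{\Delta_n, \delta_n\}$. Fixing $x \in [0,1]$, I would iteratively build families of admissible pairs $\mathcal{M}^{*}_{\Delta_n} \subset \mathcal{T}_{\Delta_n} \times \mathcal{I}_{\Delta_n}(x)$ and $\mathcal{M}^{*}_{\delta_n} \subset \mathcal{T}_{\delta_n} \times \mathcal{I}_{\delta_n}(x)$, declaring a pair $(\mathbf{T}, \mathbf{I})$ admissible when $\mathbf{T} \cap L_x \subset \mathbf{I}$ (so that $\mathbf{T}$ will contain a line hitting $R \cap L_x$ in the limit). At the step $\Delta_n \to \delta_n$, where $\mathcal{L}_G$ refines through the $(\delta_n/\Delta_n, \tau, C)$-set $\mathcal{T}_{\delta_n}(\mathbf{T})$, each admissible pair $(\mathbf{T}, \mathbf{I})$ would produce $(\Delta_n/\delta_n)^\tau$ children $(T, I)$ by letting $T$ run over $\mathcal{T}_{\delta_n}(\mathbf{T})$ and choosing $I \subset \mathbf{I}$ to be the essentially unique $\delta_n$-subinterval containing $T \cap L_x$; equation \eqref{form33} guarantees $I \in \mathcal{I}_{\delta_n}(x)$. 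At the step $\delta_n \to \Delta_{n+1}$, where $\mathcal{L}_G$ branches fully ($2$-dimensionally) while $R_x$ contracts via \nref{P1} and \eqref{form37} to a $(\tau-1)$-dimensional pattern, each pair $(T, I)$ would spawn $\sim (\delta_n/\Delta_{n+1})^\tau$ children $(\mathbf{T}', \mathbf{I}')$, with $\mathbf{I}'$ ranging over the $\sim (\delta_n/\Delta_{n+1})^{\tau-1}$ elements of the $(\tau-1, C)$-set $\mathcal{I}'_{\Delta_{n+1}}(I)$ and, for each such $\mathbf{I}'$, $\mathbf{T}'$ ranging over the $\sim \delta_n/\Delta_{n+1}$ dyadic $\Delta_{n+1}$-subtubes of $T$ whose intersection with $L_x$ lies in $\mathbf{I}'$ (one per slope bin). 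Iterating yields $|\mathcal{M}^{*}_{\delta_n}| \sim \delta_n^{-\tau}$, and the nested containments force any line in $\bigcap_n (\cup \mathcal{M}^{*}_{\delta_n})$ to satisfy $\ell \cap L_x \in \bigcap_n \mathbf{I}_n \subset R_x$, placing the support inside $\mathcal{L}_G(R_x)$. I would then define $\mu$ as the weak limit of the uniform probability measures on $\mathcal{M}^{*}_{\delta_n}$, pushed forward to $\mathcal{A}(2,1)$ via the projection to the tube coordinate.

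The main remaining task is the Frostman bound $\mu(B(r)) \lesssim r^\tau$ for every ball $B(r) \subset \mathcal{A}(2,1)$. For $r \in [\delta_n, \Delta_n]$, the $(\delta_n/\Delta_n, \tau, C)$-set property of $\mathcal{T}_{\delta_n}(\mathbf{T})$ immediately bounds the number of admissible $\delta_n$-children meeting $B(r)$ by $\lesssim (r/\delta_n)^\tau$, giving $\mu(B(r)) \lesssim (r/\delta_n)^\tau \cdot \delta_n^\tau = r^\tau$. For $r \in [\Delta_{n+1}, \delta_n]$, inside a single $\delta_n$-parent $T$ and working in its $(a,b)$-parameter square, the admissible $\Delta_{n+1}$-cells form a Furstenberg-type configuration whose direction set (the valid $L_x$-intercepts $y = ax + b$) is exactly the $(\Delta_{n+1}/\delta_n, \tau-1, C)$-set from \nref{P1}. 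A direct incidence count then gives $\lesssim (r/\Delta_{n+1})^{\tau-1} \cdot (r/\Delta_{n+1}) = (r/\Delta_{n+1})^\tau$ admissible cells in $B(r)$, and hence $\mu(B(r)) \lesssim r^\tau$. Scales $r \geq \Delta_1$ are trivial since $\mu$ has total mass one.

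The principal obstacle will be the second Frostman regime, where the excess $2$-dimensional branching of $\mathcal{L}_G$ is tamed only by the $(\tau-1)$-dimensional character of the valid $L_x$-intercepts; one must verify that the resulting admissible set is quantitatively non-concentrated in parameter space at every intermediate scale $r \in [\Delta_{n+1}, \delta_n]$, which is precisely what the Furstenberg-style incidence bound above provides. Once $\mu$ is shown to be $\tau$-Frostman, an application of Frostman's lemma in $\mathcal{A}(2,1)$ yields $\Hd \mathcal{L}_G(R_x) \geq \tau$ and completes the proof.
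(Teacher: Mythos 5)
Your proposal takes essentially the same route as the paper: build a nested Cantor-type family of tubes along the double scale sequence, keep the tube slices on $L_x$ anchored to dyadic intervals of $R_{\Delta_n}$, and verify a Frostman estimate of exponent $\tau$ in the two scale regimes $[\delta_n,\Delta_n]$ and $[\Delta_{n+1},\delta_n]$. The paper packages the measure construction into Lemma~\ref{lemma6}; you build $\mu$ directly as a weak limit, which is the same argument. Your Furstenberg-style incidence count in the regime $[\Delta_{n+1},\delta_n]$ is a nice way to flesh out the step the paper abbreviates as ``essentially a product of a $(\tau-1)$-dimensional set with an interval.''

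There is, however, a genuine error in your admissibility criterion. You declare $(\mathbf{T},\mathbf{I})$ admissible when $\mathbf{T}\cap L_x\subset\mathbf{I}$, but under the paper's conventions $\mathbf{T}$ is a dyadic $(4\Delta_n)$-tube whose vertical slice $\mathbf{T}\cap L_x$ has length on the order of $4\Delta_n$, while $\mathbf{I}$ is a single $\Delta_n$-interval; this containment can therefore never hold, and the families $\mathcal{M}^*_{\Delta_n}$ you build would be empty. The correct criterion, as in the paper, is the reverse containment $\mathbf{I}_{\mathbf{T}}\subset\mathbf{T}\cap L_x$: the interval sits inside the slice, and the factor-of-$4$ width discrepancy is precisely what guarantees that a positive proportion of the $\delta_n$-subtubes of $\mathbf{T}$ have their slices contained in $\mathbf{I}_{\mathbf{T}}$ (this is the content of the paper's first Claim). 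The conclusion that a limit line lies in $\mathcal{L}_G(R_x)$ still holds with the corrected criterion, since $\mathcal{H}^1(\mathbf{T}_n\cap L_x)\lesssim\Delta_n\to 0$ forces $\bigcap_n(\mathbf{T}_n\cap L_x)$ and $\bigcap_n\mathbf{I}_n$ to shrink to the same point of $R_x$. The same reversal contaminates your $\Delta_n\to\delta_n$ refinement step (``the essentially unique $\delta_n$-subinterval containing $T\cap L_x$'' does not exist, since $T\cap L_x$ has length $\sim 4\delta_n$); once corrected to ``an interval $I_T\in\mathcal{I}_{\delta_n}(\mathbf{I}_{\mathbf{T}})$ contained in $T\cap L_x$,'' your construction and Frostman bounds line up with the paper's two Claims.
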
 

The proof will be based on the following lemma:
\begin{lemma}\label{lemma6} Let $d \geq 1$ and $t \in [0,d]$. Let $E \subset [0,1]^{d}$ be a Cantor set of the form
\begin{displaymath} E = \bigcap_{m = 0}^{\infty} E_{m}, \end{displaymath}
where each $E_{m}$ is a union of closed dyadic $\delta_{m}$-cubes, and $\{\delta_{m}\}_{m = 0}^{\infty} \subset 2^{-\N}$ be a super-geometrically decaying sequence: thus $\delta_{m} \leq \epsilon \delta_{m - 1}$ for all $m \geq m_{\epsilon}$.

Let $C \geq 1$. For $Q \in \mathcal{D}_{\delta_{m}}(E)$, assume that $E \cap Q$ is a \emph{relative} $(\delta_{m + 1},u,C)$-subset of $Q$. By this, we mean that the rescaled set $S_{Q}(E \cap Q) \subset [0,1]^{2}$ is a non-empty $(\delta_{m + 1}/\delta_{m},u,C)$-set, or equivalently
\begin{displaymath} |E \cap \mathbf{q}|_{\delta_{m + 1}} \leq C\left(\tfrac{r}{\delta_{m}} \right)^{u} \cdot |E \cap Q|_{\delta_{m + 1}}, \qquad \mathbf{q} \in \mathcal{D}_{r}(Q), \, \delta_{m + 1} \leq r \leq \delta_{m}. \end{displaymath}
Then $\Hd E \geq u$. \end{lemma}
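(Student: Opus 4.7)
The plan is to establish $\Hd E \geq u$ by constructing a natural probability measure $\mu$ on $E$ and verifying a Frostman-type upper bound $\mu(B(x,r)) \lesssim_{\epsilon_0} r^{u-\epsilon_0}$ for every $\epsilon_0 > 0$; the mass distribution principle then gives $\Hd E \geq u-\epsilon_0$, and letting $\epsilon_0 \to 0$ finishes the argument. The measure $\mu$ should be defined inductively as the natural ``equidistributed'' one: set $\mu([0,1]^d) = 1$, and having assigned $\mu(Q)$ to $Q \in \mathcal{D}_{\delta_m}(E)$, give each $\mathbf{q} \in \mathcal{D}_{\delta_{m+1}}(E \cap Q)$ mass $\mu(Q)/|E \cap Q|_{\delta_{m+1}}$. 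This produces a Borel probability measure supported on $E$.

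The first key step is a telescoping upper bound $\mu(Q) \lesssim C^{m} \delta_m^u$ for $Q \in \mathcal{D}_{\delta_m}(E)$. To prove it, I would apply the hypothesis at each intermediate scale to the parent cube $Q_{k-1} \in \mathcal{D}_{\delta_{k-1}}(E)$ of $Q_k \in \mathcal{D}_{\delta_k}(E)$, choosing $\mathbf{q} := Q_k$. Since $|E \cap Q_k|_{\delta_k}\sim 1$, the bound reads $1 \lesssim C(\delta_k/\delta_{k-1})^u |E \cap Q_{k-1}|_{\delta_k}$, i.e.\ $|E \cap Q_{k-1}|_{\delta_k} \gtrsim C^{-1}(\delta_{k-1}/\delta_k)^u$. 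Multiplying these bounds along the dyadic chain $[0,1]^d = Q_0 \supset Q_1 \supset \ldots \supset Q_m = Q$ yields
\begin{equation*}
\mu(Q) \;=\; \prod_{k=0}^{m-1} |E \cap Q_k|_{\delta_{k+1}}^{-1} \;\lesssim\; C^{m} \delta_m^u.
\end{equation*}

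Next, for $r \in [\delta_{m+1},\delta_m]$ and any ball $B(x,r)$, note that $B(x,r)$ meets $O(1)$ cubes $Q \in \mathcal{D}_{\delta_m}(E)$ and, inside each such $Q$, is covered by $O(1)$ dyadic $r$-cubes $\mathbf{q}$. The hypothesis applied to such a $\mathbf{q}$ gives $|E \cap \mathbf{q}|_{\delta_{m+1}} \leq C(r/\delta_m)^u |E \cap Q|_{\delta_{m+1}}$, and every $\delta_{m+1}$-subcube of $E \cap Q$ carries mass $\mu(Q)/|E \cap Q|_{\delta_{m+1}}$, so $\mu(\mathbf{q}) \leq C(r/\delta_m)^u \mu(Q) \lesssim C^{m+1} r^u$, and hence $\mu(B(x,r)) \lesssim C^{m+1} r^u$. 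To absorb the $C^{m+1}$ factor, I would invoke the super-geometric decay hypothesis: given $\epsilon_0 > 0$, choose $\epsilon = \epsilon(\epsilon_0)$ so that $C \leq \epsilon^{-\epsilon_0}$; for $m \geq m_\epsilon$ this yields $\delta_m \leq \epsilon^{m-m_\epsilon}\delta_{m_\epsilon}$ and therefore $C^{m+1} \lesssim_\epsilon \delta_m^{-\epsilon_0} \leq r^{-\epsilon_0}$. Combining, $\mu(B(x,r)) \lesssim_{\epsilon_0} r^{u-\epsilon_0}$ for all sufficiently small $r$, and Frostman's lemma completes the proof.

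The argument is a standard Cantor-type measure construction, so no single step is technically deep. The only real subtlety is that the relative $(\delta_{m+1},u,C)$-set hypothesis must be invoked at \emph{every} scale, both to lower-bound $|E \cap Q_k|_{\delta_{k+1}}$ in the telescoping step and to upper-bound $|E \cap \mathbf{q}|_{\delta_{m+1}}$ in the ball step; this unavoidably accrues a $C^{m}$ loss. The super-geometric decay assumption on $\{\delta_m\}$ exists precisely to convert this $C^m$ into an arbitrarily small power of $1/r$, and is the structural reason the lemma yields the exponent $u$ rather than some strictly smaller one.
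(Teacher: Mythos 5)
Your proposal is correct and follows essentially the same route as the paper's proof: define the equidistributed Cantor measure $\mu$, telescope to get $\mu(Q) \lesssim C^m\delta_m^u$, handle intermediate radii $r \in [\delta_{m+1},\delta_m]$ by one more application of the relative $(\delta_{m+1},u,C)$-set condition, and absorb the $C^{m+1}$ loss via the super-geometric decay of $\{\delta_m\}$ before invoking the mass distribution principle. The only cosmetic difference is that you derive the lower bound $|E\cap Q_{k-1}|_{\delta_k}\gtrsim C^{-1}(\delta_{k-1}/\delta_k)^u$ by applying the hypothesis with $\mathbf{q}:=Q_k$, whereas the paper cites the standard fact that a non-empty $(\rho,t,C)$-set has $\rho$-covering number at least $\rho^{-t}/C$; these are the same observation.
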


\begin{proof} Without loss of generality, we may assume that $\delta_0=1$ and $E_0 = [0,1]^d$. We define a measure $\mu$ on $E$ in the obvious way, requiring $\mu([0,1]^{d}) = 1$, and then
\begin{displaymath} \mu(Q) := \frac{\mu(Q^{(n - 1)})}{|E \cap Q^{(n - 1)}|_{\delta_{n}}}, \qquad Q \in \mathcal{D}_{\delta_{n}}(E), \, n \geq 1, \end{displaymath}
where $Q^{(n - 1)} \in \mathcal{D}_{\delta_{n - 1}}(E)$ is the unique $\delta_{n - 1}$-cube containing $Q$. Iterating the definition, and applying the fact that the $\rho$-covering number of a non-empty $(\rho,t,C)$-set is at least $\rho^{-t}/C$, we find
\begin{displaymath} \mu(Q) = \prod_{j = 1}^{n} |E \cap Q^{(j - 1)}|_{\delta_{j}}^{-1} \leq C^{n} \prod_{j = 1}^{n} \left(\tfrac{\delta_{j}}{\delta_{j - 1}}\right)^{u} = C^{n}\delta_{n}^{u}, \qquad Q \in \mathcal{D}_{\delta_{n}}(E).  \end{displaymath}
This would roughly show that $\mu$ satisfies a $(u - \epsilon)$-dimensional Frostman condition for radii $r \in \{\delta_{n}\}_{n \in \N}$. To treat the intermediate radii $\delta_{n} \leq r \leq \delta_{n - 1}$, we need to apply the relative $(\delta_{n},u,C)$-set property of the sets $E \cap Q$ in a stronger way than above. Fix $\delta_{n} \leq r \leq \delta_{n - 1}$, and let $\mathbf{Q} \in \mathcal{D}_{r}(E)$. Let $\mathbf{Q}^{(n - 1)} \in \mathcal{D}_{\delta_{n - 1}}(E)$ be the unique $\delta_{n - 1}$-cube containing $\mathbf{Q}$. Then,
\begin{displaymath} \mu(\mathbf{Q}) = |\mathbf{Q} \cap E|_{\delta_{n}} \cdot \frac{\mu(\mathbf{Q}^{(n - 1)})}{|E \cap \mathbf{Q}^{(n - 1)}|_{\delta_{n}}} \leq C\left(\tfrac{r}{\delta_{n - 1}}\right)^{u} \cdot \mu(\mathbf{Q}^{(n - 1)}) \leq C^{n+1}r^{u}. \end{displaymath} 
Now, it remains to note that $C^{n+1} \leq \delta_{n - 1}^{-\epsilon} \leq r^{-\epsilon}$ for all $n \geq n_{\epsilon}$ by the super-geometric decay of $\{\delta_{n}\}_{n \in \N}$. Therefore $\mu(B(x,r)) \lesssim_{\epsilon} r^{u - \epsilon}$ for all $\epsilon > 0$ and all $0<r\le r_\epsilon$, and consequently $\Hd E \geq u$.  \end{proof}

\begin{proof}[Proof of Proposition \ref{prop2}] We prove the proposition by constructing a $\tau$-dimensional Cantor-type subset $\mathcal{E}(x) \subset \mathcal{L}_{G}(R_{x})$ which satisfies the hypotheses of Lemma \ref{lemma6} (applied to dyadic tubes instead of dyadic cubes).

Define $\mathcal{T}_{\Delta_{0}}(x)$ to consist of all the elements $\mathbf{T} \in \mathcal{T}_{\Delta_{0}}$ such that
\begin{displaymath} \mathbf{I}_{0} := [0,1] \subset \mathbf{T} \cap L_{x}. \end{displaymath}
Assume next that $\mathcal{T}_{\Delta_{n}}(x)$ has already been defined for some $n \geq 0$, with the property that whenever $\mathbf{T} \in \mathcal{T}_{\Delta_{n}}(x)$, then $\mathbf{T} \cap L_{x}$ contains an interval $\mathbf{I}_{\mathbf{T}} \in \mathcal{I}_{\Delta_{n}}(x)$.

Fix $\mathbf{T} \in \mathcal{T}_{\Delta_{n}}(x)$. We define $\mathcal{T}_{\delta_{n}}(\mathbf{T},x)$ to consist of all those tubes $T \in \mathcal{T}_{\delta_{n}}$ which are contained in $\mathbf{T}$, and such that $T \cap L_{x}$ contains an interval $I_{T} \in \mathcal{I}_{\delta_{n}}(x)$ with $I_{T} \subset \mathbf{I}_{\mathbf{T}}$. We remark that $T \cap L_{x}$ may contain more than one interval from $\mathcal{I}_{\delta_{n}}(x)$ with this property, and we simply choose one of them and denote it by $I_{T}$. At the same time, there are at most $\lesssim 1$ possible choices, since all dyadic tubes in $\calT^{\delta_n}$ form an angle $\gtrsim 1$ with $L_x$, which means that $\mathcal{H}^1({T}\cap L_x)\lesssim \delta_n$.

We set
\begin{displaymath} \mathcal{T}_{\delta_{n}}(x) := \bigcup_{\mathbf{T} \in \mathcal{T}_{\Delta_{n}}(x)} \mathcal{T}_{\delta_{n}}(\mathbf{T},x). \end{displaymath}
It remains to define $\mathcal{T}_{\Delta_{n + 1}}$. Fix $T \in \mathcal{T}_{\delta_{n}}(x)$. We define $\mathcal{T}_{\Delta_{n + 1}}(T,x)$ to consist of all those tubes $\mathbf{T} \in \mathcal{T}_{\Delta_{n + 1}}$ which are contained in $T$, and such that $\mathbf{T} \cap L_{x}$ contains an interval $\mathbf{I}_{\mathbf{T}} \in \mathcal{I}_{\Delta_{n + 1}}'(x)$ with $\mathbf{I}_{\mathbf{T}} \subset I_{T}$. We then set
\begin{displaymath} \mathcal{T}_{\Delta_{n + 1}}(x) := \bigcup_{T \in \mathcal{T}_{\delta_{n}}(x)} \mathcal{T}_{\Delta_{n + 1}}(T,x). \end{displaymath}
We are then prepared to define the Cantor set $\mathcal{E}(x) \subset \mathcal{L}_{G}(R_{x})$, whose dimension will equal $\tau$. We say that $\ell \in \mathcal{E}(x)$ if there exists a sequence of tubes $\mathbf{T}_{1},T_{1},\mathbf{T}_{2},T_{2},\ldots$ such that $\mathbf{T}_{n} \in \mathcal{T}_{\Delta_{n}}(x)$, $T_{n} \in \mathcal{T}_{\delta_{n}}(x)$, and $\mathbf{T}_{n} \subset T_{n - 1} \subset \mathbf{T}_{n - 1}$ for all $n \geq 1$, and
\begin{displaymath} \ell = \bigcap_{n = 1}^{\infty} \mathbf{T}_{n} = \bigcap_{n = 1}^{\infty} T_{n}. \end{displaymath}
We record that $\ell \in \mathcal{L}_{G}(R_{x})$. The reason is that the tubes $T_{n},\mathbf{T}_{n}$ come with associated intervals $I_{n} \in \mathcal{I}_{\delta_{n}}(x)$ and $\mathbf{I}_{n} \in \mathcal{I}_{\Delta_{n}}'(x)$ (as in the construction above) with the property that $\mathbf{I}_{n} \subset \mathbf{T}_{n} \cap L_{x}$. As $n \to \infty$, the intervals $\mathbf{I}_{n}$ (or $I_{n}$) converge to a unique point $z \in R_{x}$, and therefore $\ell \in \mathcal{L}_{G}(\{z\}) \subset \mathcal{L}_{G}(R_{x})$.

\begin{figure}[h!]
\begin{center}
\begin{overpic}[scale = 0.5]{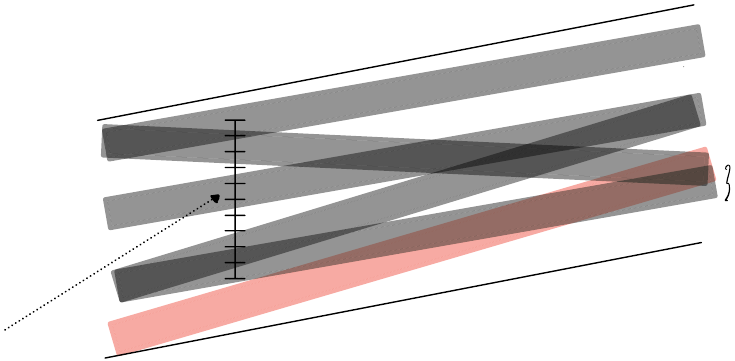}
\put(-3,1){$\mathbf{I}_{\mathbf{T}}$}
\put(101,23){$\delta_{n}$}
\put(13,35){$\mathbf{T}$}
\end{overpic}
\caption{The grey tubes are in the family $\mathcal{T}_{\delta_{n}}(\mathbf{T},x)$, but the red tube is not.}\label{fig5}
\end{center}
\end{figure}

It remains to show that $\Hd \mathcal{E}(x) = \tau$. This work divides into two claims:
\begin{claim}  For all $\mathbf{T} \in \mathcal{T}_{\Delta_{n}}(x)$, $n \geq 0$, the family $\mathcal{T}_{\delta_{n}}(\mathbf{T},x) \subset \mathcal{T}^{\delta_{n}}$ is a relative $(\delta_{n},\tau,C)$-subset of $\mathbf{T}$ with an absolute constant $C > 0$.\end{claim}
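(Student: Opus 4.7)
To prove the claim, I will work in the parameter space of $\mathcal{T}^{\delta_{n}}$. Identifying each dyadic tube with its parameter square via the duality map $\mathbf{D}$, let $\Phi \colon \mathbf{T}^{\ast} \to [0,1]^{2}$ denote the affine rescaling sending the parameter $(4\Delta_{n})$-square of $\mathbf{T}$ onto the unit square, and set $\tilde{\mathcal{F}} := \Phi(\mathcal{T}_{\delta_{n}}(\mathbf{T}))$ and $\tilde{\mathcal{F}}_{x} := \Phi(\mathcal{T}_{\delta_{n}}(\mathbf{T},x))$. Unwinding the meaning of ``relative $(\delta_{n}, \tau, C)$-subset of $\mathbf{T}$'' (in analogy with Lemma \ref{lemma6}), the claim reduces to showing that $\tilde{\mathcal{F}}_{x}$ is a $(\delta_{n}/\Delta_{n}, \tau, C')$-set in $[0,1]^{2}$ for an absolute constant $C'$; by construction, $\tilde{\mathcal{F}}$ already enjoys this property (with constant $C$) after restriction to a suitable $\tfrac{1}{100}$-square.

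The geometric heart of the argument is to translate the constraint defining $\mathcal{T}_{\delta_{n}}(\mathbf{T},x)$ into a condition in parameter space. For a tube $T$ with parameters $(a,b)$ a direct computation gives
$$ T \cap L_{x} = \{x\} \times [ax + b, \, ax + b + 4\delta_{n}(x+1)], $$
an interval of length in $[4\delta_{n}, 8\delta_{n}]$. Hence the requirement that $T \cap L_{x}$ contain a dyadic $\delta_{n}$-subinterval of $\mathbf{I}_{\mathbf{T}}$ is equivalent, modulo an $O(\delta_{n})$ error that is negligible compared to $|\mathbf{I}_{\mathbf{T}}| = \Delta_{n}$, to the linear condition $ax + b \in \mathbf{I}_{\mathbf{T}}$. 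After applying $\Phi$, this becomes a strip $\tilde{S}_{x} \subset [0,1]^{2}$ of slope $-x \in [-1,0]$ and \emph{macroscopic} perpendicular width comparable to $1$, crucially independent of the small ratio $\delta_{n}/\Delta_{n}$.

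Because $\tilde{S}_{x}$ has macroscopic width and intersects $[0,1]^{2}$ in a region of area bounded below by an absolute constant (since $\mathbf{I}_{\mathbf{T}}$ lies nontrivially inside $\mathbf{T} \cap L_{x}$ by the inductive hypothesis), it contains at least one $\tfrac{1}{100}$-square $Q_{x}$. Enhancing the construction of $\mathcal{T}_{\delta_{n}}(\mathbf{T})$ so that the $(\delta_{n}/\Delta_{n}, \tau, C)$-set property holds on \emph{every} $\tfrac{1}{100}$-square in $[0,1]^{2}$ --- a modest spatial-uniformity strengthening of the stated defining property, well within the flexibility of the building block of Section \ref{s:buildingBlock} --- yields that $\tilde{\mathcal{F}} \cap Q_{x}$ is itself a $(\delta_{n}/\Delta_{n}, \tau, C)$-set. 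Consequently $|\tilde{\mathcal{F}}_{x}| \geq |\tilde{\mathcal{F}} \cap Q_{x}| \gtrsim (\Delta_{n}/\delta_{n})^{\tau} \sim |\tilde{\mathcal{F}}|$, and the per-square estimates paste together (using the trivial bound $|\cdot|_{r} \lesssim r^{-2}$ at large scales $r \gtrsim 1/100$) into a global $(\delta_{n}/\Delta_{n}, \tau, C)$-bound on $\tilde{\mathcal{F}}$. Combining this with $\tilde{\mathcal{F}}_{x} \subset \tilde{\mathcal{F}}$ one obtains, for $r \in [\delta_{n}/\Delta_{n}, 1]$,
$$ |\tilde{\mathcal{F}}_{x} \cap B(y,r)|_{r} \leq |\tilde{\mathcal{F}} \cap B(y,r)|_{r} \lesssim r^{\tau} |\tilde{\mathcal{F}}| \lesssim r^{\tau} |\tilde{\mathcal{F}}_{x}|, $$
which is the desired $(\delta_{n}/\Delta_{n}, \tau, C')$-set property. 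The main obstacle is precisely the spatial-uniformity upgrade in the middle step: ensuring a ``good'' $\tfrac{1}{100}$-square lies inside $\tilde{S}_{x}$ irrespective of the slope $-x$ of the strip and the position of $\mathbf{I}_{\mathbf{T}}$ within $\mathbf{T} \cap L_{x}$.
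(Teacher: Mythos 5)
Your proof is essentially the paper's argument, re-expressed in the point--line parameter space. Both arguments reduce the claim to showing that a positive absolute fraction of the well-distributed family $\mathcal{T}_{\delta_{n}}(\mathbf{T})$ already lies in $\mathcal{T}_{\delta_{n}}(\mathbf{T},x)$, and then observe that a subset of comparable cardinality of a $(\delta_{n}/\Delta_{n},\tau,C)$-set inherits the non-concentration bound (with a larger absolute constant). The paper does this purely geometrically, by noting that every $T\in\mathcal{T}_{\delta_{n}}(\mathbf{T})$ with $T\cap L_{x}\subset \mathbf{I}_{\mathbf{T}}$ is automatically in $\mathcal{T}_{\delta_{n}}(\mathbf{T},x)$ (using the equality $\mathcal{I}_{\delta_{n}}(\mathbf{I}_{\mathbf{T}})=\mathcal{D}_{\delta_{n}}(\mathbf{I}_{\mathbf{T}})$ from \eqref{form33}), and that this containment constraint captures a positive fraction of the moderately well-distributed tubes. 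You do the same thing, but make the containment constraint explicit as a strip $\tilde{S}_{x}$ of slope $-x$ and width $\sim 1/4$ in the rescaled parameter square, and then invoke the $(\delta_{n}/\Delta_{n},\tau,C)$-set property of $\mathcal{T}_{\delta_{n}}(\mathbf{T})$ restricted to a $\tfrac{1}{100}$-square inside the strip.

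Two small remarks. First, the ``spatial-uniformity upgrade'' you flag as the main obstacle is not actually an extension of the paper's construction: the defining property stated just above \eqref{form35} is intended to hold for \emph{any} $\tfrac{1}{100}$-square $Q\subset[0,1]^{2}$ (``moderately well-distributed''), which is exactly what the paper's own proof of the claim invokes when it says a positive absolute fraction of $\mathcal{T}_{\delta_{n}}(\mathbf{T})$ satisfies \eqref{form34}. So there is nothing to strengthen; your worry that a good $\tfrac{1}{100}$-square might not exist inside $\tilde{S}_{x}$ is also unfounded, since $\tilde{S}_{x}$ has perpendicular width $\gtrsim 1$ and slope in $[-1,0]$, and therefore its intersection with $[0,1]^2$ always contains such a square. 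Second, you should replace the approximate condition $ax+b\in\mathbf{I}_{\mathbf{T}}$ by the exact inclusion $T\cap L_{x}\subset\mathbf{I}_{\mathbf{T}}$ (as the paper does), or shrink $\mathbf{I}_{\mathbf{T}}$ by $O(\delta_{n})$, to avoid the boundary case in which $T\cap L_{x}$ straddles an endpoint of $\mathbf{I}_{\mathbf{T}}$ but fails to contain a full dyadic $\delta_{n}$-subinterval of it; also make explicit the use of \eqref{form33}, which is what guarantees that every dyadic $\delta_{n}$-subinterval of $\mathbf{I}_{\mathbf{T}}$ lies in $\mathcal{I}_{\delta_{n}}(x)$. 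With these cosmetic corrections your argument is correct and matches the paper's.
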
 

\begin{proof} Fix $\mathbf{T} \in \mathcal{T}_{\Delta_{n}}(x)$, $n \geq 0$. The family $\mathcal{T}_{\delta_{n}}(\mathbf{T},x)$ is depicted in Figure \ref{fig5}. Recall that by the definition of $\mathbf{T} \in \mathcal{T}_{\Delta_{n}}(x)$, there exists an interval $\mathbf{I}_{\mathbf{T}} \in \mathcal{I}_{\Delta_{n}}'(x)$ (also shown in Figure \ref{fig5}) such that $\mathbf{I}_{\mathbf{T}} \subset \mathbf{T} \cap L_{x}$. Note that $\mathbf{T}$ is a dyadic $(4\Delta_{n})$-tube, so the width of $\mathbf{T}$ is $4$ times the length of $\mathbf{I}_{\mathbf{T}}$.

 Next, recall that the entire family $\mathcal{T}_{\delta_{n}}(\mathbf{T})$ (defined above \eqref{form35}) consists of a maximally separated set of $\delta_{n}$-tubes contained in $\mathbf{T}$ and satisfying $|\mathcal{T}_{\delta_{n}}(\mathbf{T})| = (\Delta_{n}/\delta_{n})^{\tau}$. However, $\mathcal{T}_{\delta_{n}}(\mathbf{T},x)$ only consists of those $T \in \mathcal{T}_{\delta_{n}}(\mathbf{T})$ with the  additional property that there exists an interval $I_{T} \in \mathcal{I}_{\delta_{n}}(\mathbf{I}_{\mathbf{T}})$ such that $I_{T} \subset T \cap L_{x}$ and $I_{T} \subset \mathbf{I}_{\mathbf{T}}$. We claim that (at least) all the tubes $T \in \mathcal{T}_{\delta_{n}}(\mathbf{T})$ with 
 \begin{equation}\label{form34} T \cap L_{x} \subset \mathbf{I}_{\mathbf{T}} \end{equation}
 have this additional property. Indeed, the width of these tubes if $4\delta_{n}$, so $T \cap L_{x}$ contains some sub-interval $I_{T} \in \mathcal{D}_{\delta_{n}}(\mathbf{I}_{\mathbf{T}})$. However, recall from \eqref{form33} that $\mathcal{I}_{\delta_{n}}(\mathbf{I}_{\mathbf{T}}) = \mathcal{D}_{\delta_{n}}(\mathbf{I}_{\mathbf{T}})$, so in fact $I_{T} \in \mathcal{I}_{\delta_{n}}(\mathbf{I}_{\mathbf{T}})$, as desired.
 
 \begin{figure}[h!]
\begin{center}
\begin{overpic}[scale = 0.5]{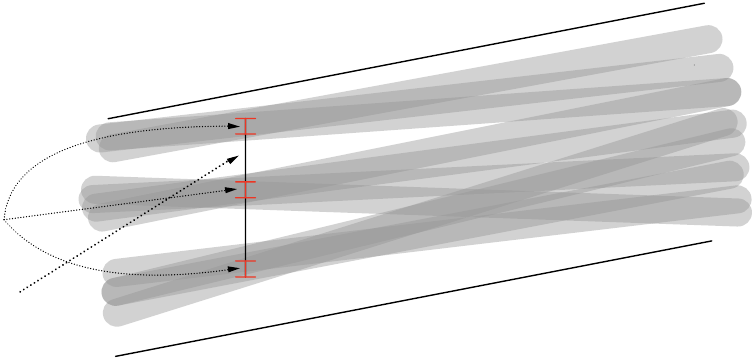}
\put(-2,4){$I_{T}$}
\put(101,23){$\delta_{n}$}
\put(13,35){$T$}
\put(-35,17){\small{$\mathbf{I} \in \mathcal{I}_{\Delta_{n + 1}}(I_{T})$}}
\end{overpic}
\caption{The grey tubes are in the family $\mathcal{T}_{\Delta_{n + 1}}(T,x)$.}\label{fig6}
\end{center}
\end{figure}

Now, since the tubes of $\mathcal{T}_{\delta_{n}}(\mathbf{T})$ are fairly well-distributed inside $\mathbf{T}$ (as discussed above \eqref{form35}), a positive absolute fraction of them satisfies \eqref{form34}. It follows that $\mathcal{T}_{\delta_{n}}(\mathbf{T},x)$ is a relative $(\delta_{n},\tau,C)$-subset of $\mathbf{T}$, with an absolute constant $C > 0$. \end{proof}

\begin{claim}  For all $T \in \mathcal{T}_{\delta_{n}}(x)$, $n \geq 0$, the family $\mathcal{T}_{\Delta_{n + 1}}(T,x) \subset \mathcal{T}^{\Delta_{n + 1}}$ is a relative $(\delta_{n},\tau,C)$-subset of $T$ with an absolute constant $C > 0$.\end{claim}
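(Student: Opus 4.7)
The plan is to rescale $T$ to unit size, parameterize sub-tubes in $T$ via point-line duality, and identify $\mathcal{T}_{\Delta_{n+1}}(T,x)$ as the preimage under a Lipschitz linear projection of the one-dimensional interval family $\mathcal{I}_{\Delta_{n+1}}'(I_{T})$. Since this projection has $1$-dimensional fibers and the target is a $(\tau-1)$-dimensional set, the source should be $\tau$-dimensional, which is exactly the desired relative $(\Delta_{n+1}/\delta_{n},\tau,C)$-set property.

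First, I would write $T = \cup \mathbf{D}(p_{T})$ with $p_{T} \in \mathcal{D}_{\delta_{n}}$. Rescaling the parameter cube $p_{T}$ by $\delta_{n}^{-1}$ identifies $\mathcal{D}_{\Delta_{n+1}}(p_{T})$ with the dyadic $(\Delta_{n+1}/\delta_{n})$-cubes of $[0,1)^{2}$. The crossing of the line $\mathbf{D}(a',b')$ with $L_{x}$ has $y$-coordinate $a'x+b'$, which in the rescaled parameters becomes the Lipschitz linear map $\pi(\tilde a,\tilde b) = \tilde a x + \tilde b$ (modulo an inessential translation). Since dyadic tube slopes lie in $[0,1]$ and $L_{x}$ is vertical, the fibers of $\pi$ inside $[0,1]^{2}$ are line segments of length $\sim 1$, uniformly in $x \in [0,1]$. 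A sub-tube $\mathbf{T}$ belongs to $\mathcal{T}_{\Delta_{n+1}}(T,x)$ iff $\pi$ sends its rescaled parameter cube into (the union of the rescaled set) $\mathcal{I}_{\Delta_{n+1}}'(I_{T})$, up to absorbable absolute constants. By property \nref{P1} applied to $\mathcal{P}_{n} = \mathcal{P}_{\Delta_{n+1}/\delta_{n}}$, after rescaling $I_{T}$ to unit length this target is a non-empty $(\Delta_{n+1}/\delta_{n},\,\tau-1,\,C_{0})$-set of intervals of side $\Delta_{n+1}/\delta_{n}$.

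To verify the relative $(\Delta_{n+1}/\delta_{n},\tau,C)$-set condition directly, take any ball $B \subset [0,1]^{2}$ of radius $r \in [\Delta_{n+1}/\delta_{n},\,1]$. The image $\pi(B)$ is an interval of length $\lesssim r$, so by the $(\tau-1)$-non-concentration of $\mathcal{I}_{\Delta_{n+1}}'(I_{T})$ (after rescaling) it meets at most $\lesssim r^{\tau-1}(\delta_{n}/\Delta_{n+1})^{\tau-1}$ of the $(\Delta_{n+1}/\delta_{n})$-intervals. For each such interval, the $\pi$-preimage intersected with $B$ is a parallelogram strip of width $\Delta_{n+1}/\delta_{n}$ and length $\lesssim r$, containing at most $\lesssim r \cdot (\delta_{n}/\Delta_{n+1})$ rescaled $(\Delta_{n+1}/\delta_{n})$-parameter cubes. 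Multiplying gives $\lesssim r^{\tau} (\delta_{n}/\Delta_{n+1})^{\tau}$, which matches the total cardinality $|\mathcal{T}_{\Delta_{n+1}}(T,x)|_{\Delta_{n+1}/\delta_{n}} \sim (\delta_{n}/\Delta_{n+1})^{\tau}$ up to an absolute constant, yielding the claimed non-concentration bound. The only real subtlety is tracking the Lipschitz norm and fiber length of $\pi$ uniformly in $x$, but this is exactly what the angle lower bound $\angle(T,L_{x}) \gtrsim 1$ already invoked in the proof of Claim 1 provides.
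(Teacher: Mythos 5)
Your argument is correct, and it fills in exactly the details that the paper omits (the paper states only ``We omit the details: this tube family is essentially a product of a $(\tau-1)$-dimensional set with an interval''). Passing to the dual parameter cube $p_T$, rescaling by $\delta_n^{-1}$, and identifying the trace on $L_x$ with the linear map $\pi(\tilde a, \tilde b) = \tilde a x + \tilde b$ is precisely the right way to make the ``product'' structure explicit: the relevant sub-tubes are the parameter $(\Delta_{n+1}/\delta_n)$-cubes meeting $\pi^{-1}\bigl(\cup\mathcal{I}'_{\Delta_{n+1}}(I_T)\bigr)$, and since the fibers of $\pi$ have slope $-x \in [-1,0]$, they cross $[0,1]^2$ in segments of length $\sim 1$ uniformly in $x$, so the counting is uniform. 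Your estimate in a ball of radius $r$ — at most $\lesssim r^{\tau-1}(\delta_n/\Delta_{n+1})^{\tau-1}$ strips, each contributing $\lesssim r\,\delta_n/\Delta_{n+1}$ cubes — yields exactly the relative $(\Delta_{n+1}/\delta_n, \tau, C)$-set non-concentration condition once divided by the total $\sim (\delta_n/\Delta_{n+1})^{\tau}$. One small remark for completeness: the lower bound $|\mathcal{T}_{\Delta_{n+1}}(T,x)| \gtrsim (\delta_n/\Delta_{n+1})^{\tau}$, which you state but do not verify, follows from the same picture — each $\mathbf{I} \in \mathcal{I}'_{\Delta_{n+1}}(I_T)$ gives a strip $\pi^{-1}(\mathbf{I})$ containing $\gtrsim \delta_n/\Delta_{n+1}$ parameter cubes with $\pi(p_{\mathbf{T}}) \supset \mathbf{I}$, and since $|\pi(p_{\mathbf{T}})| \lesssim \Delta_{n+1}$ each cube is counted boundedly many times. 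Also note that your ``iff'' characterization is really only the inclusion $\mathcal{T}_{\Delta_{n+1}}(T,x) \subset \{ \mathbf{T} : \pi(p_{\mathbf{T}}) \cap \cup\mathcal{I}' \neq \emptyset\}$ (the exact condition requires containment of a full $\Delta_{n+1}$-interval), but this one-sided inclusion is precisely what the non-concentration upper bound requires, so no harm is done.
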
 

\begin{proof} Fix $T \in \mathcal{T}_{\delta_{n}}(x)$. The family $\mathcal{T}_{\Delta_{n}}(T,x)$ is depicted in Figure \ref{fig6}. Recall that by the definition of $T \in \mathcal{T}_{\delta_{n}}(x)$, there exists an interval $I_{T} \in \mathcal{I}_{\delta_{n}}(x)$ (also shown in Figure \ref{fig6}) such that $I_{T} \subset T \cap L_{x}$. As before, the width of $T$ is $4$ times the length of $I_{T}$.

Next, recall that the entire family $\mathcal{T}_{\Delta_{n + 1}}(T)$ (defined above \eqref{form36}) consists of \textbf{all} the dyadic $(4\Delta_{n + 1})$-sub-tubes of $T$, or in other words
\begin{displaymath} \mathcal{T}_{\Delta_{n + 1}}(T) = \mathcal{T}^{4\Delta_{n + 1}}(T). \end{displaymath}
However, not (even nearly of) all of these tubes belong to $\mathcal{I}_{\Delta_{n + 1}}(T,x)$. Namely, the condition for $\mathbf{T} \in \mathcal{T}_{\Delta_{n + 1}}(T,x)$ is that there exists an interval $\mathbf{I} \in \mathcal{I}_{\Delta_{n + 1}}'(I_{T})$ such that $\mathbf{I} \subset \mathbf{T} \cap L_{x}$.

Recall from below \eqref{form37} that $\mathcal{I}_{\Delta_{n + 1}}'(I_{T})$ is a relative $(\Delta_{n + 1},\tau - 1,C)$-subset of $I_{T}$. Now, it remains to observe that the subset of $\mathcal{T}^{4\Delta_{n + 1}}(T) = \mathcal{T}_{\Delta_{n + 1}}(T)$ of tubes containing at least one interval $\mathbf{I} \in \mathcal{I}_{\Delta_{n + 1}}(I_{T})$ is a $\tau$-dimensional set, or more precisely a relative $(\Delta_{n + 1},\tau,C)$-subset of $T$. We omit the details: this tube family is essentially a product of a $(\tau - 1)$-dimensional set with an interval.   \end{proof} 

The previous two claims together show that the Cantor set $\mathcal{E}(x)$ of lines is the intersection of "a nested sequence of $\tau$-dimensional sets", and satisfies the hypotheses of Lemma \ref{lemma6}. Consequently $\Hd \mathcal{L}_{G}(R_{x}) \geq \Hd \mathcal{E}(x) \geq \tau$, as claimed in \nref{G2}. \end{proof}

We have now verified that the sets $R,\mathcal{L}_{F},\mathcal{L}_{G}$ satisfy the properties stated in Section \ref{s:generalIdea}, so the proof of Theorem \ref{mainExampleDual} is complete.

\appendix

\section{Construction of the building block}\label{s:constructBlock}

The purpose of this section is to provide a construction for the "building block" sets $\mathcal{P}_{\Delta}$ described in Section \ref{s:buildingBlock}. We repeat the properties here for the reader's convenience:
\subsection{The building block} Fix $\tau \in [1,2]$ and $s \in [0,2 - \tau]$ for this section. (These restrictions on $s,\tau$ are valid in Section \ref{s:buildingBlock} by the assumption in Theorems \ref{mainExample}-\ref{mainExampleDual} that $t \in (1,2]$ and $s \in [t - 1,\tfrac{1}{3}(2t - 1)]$.) We claim that for all $\Delta \in 2^{-\N}$ sufficiently small, there exists a set $\mathcal{P}_{\Delta} \subset \mathcal{D}_{\Delta}$ with the following properties \nref{P1}-\nref{P2}.
\begin{itemize}
\item[(P1)] If $x \in [0,1]$, then $\mathcal{P}_{\Delta} \cap (\{x\} \times \R)$ contains a non-empty $(\Delta,\tau - 1,C)$-set. If $\tau = 1$, this just means that $\mathcal{P}_{\Delta} \cap (\{x\} \times \R) \neq \emptyset$ for all $x \in [0,1]$.
\item[(P2)] There exists a $\Delta^{s}$-separated set $\mathcal{E}_{\Delta} \subset \mathcal{D}_{\Delta}(S^{1})$ satisfying $|\mathcal{E}_{\Delta}| \sim \Delta^{-s}$ and
\begin{displaymath} |\pi_{e}(\mathcal{P}_{\Delta})|_{\Delta} \lesssim \Delta^{-(s + \tau)/2}, \qquad e \in \cup \mathcal{E}_{\Delta}. \end{displaymath}
\end{itemize}
Additionally, we will need the following "quasi nested" property of the sets $\mathcal{E}_{\Delta}$:
\begin{itemize}
\item[(E)] If the sequence $\{\Delta_{n}\}_{n \in \N} \subset 2^{-\N}$ decays so rapidly that $\Delta_{n} < \Delta_{n - 1}^{n}$, then the set
\begin{equation}\label{form41} E := \bigcap_{n \in \N} (\cup \mathcal{E}_{\Delta_{n}}) \end{equation}
satisfies $\Hd E = s$.
\end{itemize}
We will only give the details in the case $\tau \in (1,2)$. The case $\tau = 2$ is trivial (then $s = 0$ and we can take $\mathcal{P}_{\Delta} = \mathcal{D}_{\Delta}$). The case $\tau = 1$ requires minor modifications which we leave to the reader. So, we fix $\tau \in (1,2)$ and $\Delta \in 2^{-\N}$, and for convenience we assume that $\{\Delta^{-\tau/2},\Delta^{\tau/2 - 1}\} \subset \N$. We will suppress $\Delta$ in our notation from now on. We start with the auxiliary set
\begin{displaymath} P' := \left\{\left(\Delta^{\tau/2}k,\Delta^{\tau/2}l \right) : 0 \leq k,l \leq \Delta^{-\tau/2} - 1 \right\} \subset [0,1)^{2}. \end{displaymath} 
By the assumption $\Delta^{\tau/2 - 1} \in \N$, we have $\Delta^{\tau/2} = \Delta n$ for some $n \in \N$. Therefore $P' \subset (\Delta \cdot \N)^{2}$, and further
\begin{displaymath} \mathcal{P}' := P' + [0,\Delta)^{2} \subset \mathcal{D}_{\Delta}. \end{displaymath}
The set $\mathcal{P}'$ would satisfy property (P2) (we will return to this later), but it severely fails (P1), being a product set. To fix this, we need to rotate $\mathcal{P}'$ slightly, as in Figure \ref{fig7}.
\begin{figure}[h!]
\begin{center}
\begin{overpic}[scale = 0.6]{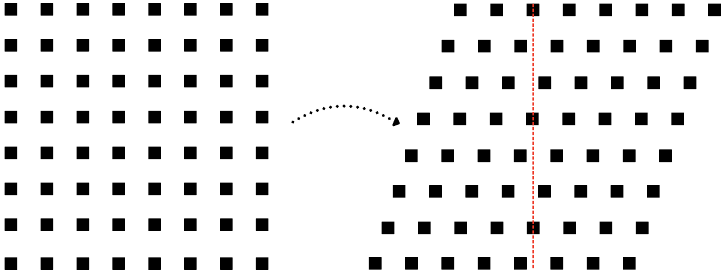}
\put(45,25){$R_{\theta}$}
\end{overpic}
\caption{The sets $\mathcal{P}'$ and $\mathcal{P}$.}\label{fig7}
\end{center}
\end{figure}

 Set $\theta := \Delta^{1 - \tau/2}$, and consider 
\begin{displaymath} R_{\theta}(x,y) := (x + \theta y,y) \quad \text{and} \quad P := R_{\theta}(P'). \end{displaymath}
We finally define $\mathcal{P} := P + [0,\Delta)^{2}$. Note that if $(x,y) = (\Delta^{\tau/2}k,\Delta^{\tau/2}l) \in P'$, then
\begin{displaymath} R_{\theta}(x,y) = (\Delta^{\tau/2}k + \Delta l,\Delta^{\tau/2}l) \subset (\Delta \cdot \N)^{2}, \end{displaymath}
so $\mathcal{P} \subset \mathcal{D}_{\Delta}(\R^{2})$. Some of the squares in $\mathcal{P}$ lie slightly outside $[0,1]^{2}$, and we simply discard those squares from $\mathcal{P}$ (only a tiny fraction is discarded, since $\theta = o_{\Delta \to 0}(1)$). 

We plan to verify (P1). The idea is roughly to show that if $(x,y) \in P$, then also the vertical translates $(x,y) + m(0,\Delta^{\tau - 1}) \in P$ for $m \in \{0,\ldots,\tfrac{1}{2}\Delta^{1 - \tau}\}$. Write $v := (0,\Delta^{\tau - 1})$.

\begin{claim} Assume that $x \in (\Delta \Z) \cap [\tfrac{1}{2},1]$, and $\Delta > 0$ is sufficiently small in terms of $\tau - 1 > 0$. Then there exists $y \in [0,\tfrac{1}{2})$ such that $(x,y) + mv \in P$ for $m \in \{0,\ldots,\tfrac{1}{2}\Delta^{1 - \tau}\}$. \end{claim}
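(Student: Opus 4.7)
My plan is to unwrap the definitions of $P$ and $v$ and reduce the claim to a routine lattice problem inside $P'$. Recall that $R_{\theta}(\Delta^{\tau/2}k,\Delta^{\tau/2}l) = (\Delta^{\tau/2}k + \Delta l,\Delta^{\tau/2}l)$, so a point $(a,b)$ lies in $P$ if and only if $a = \Delta^{\tau/2}k + \Delta l$ and $b = \Delta^{\tau/2}l$ for some integers $k,l \in \{0,\ldots,\Delta^{-\tau/2}-1\}$. Writing $y = l_{0}\Delta^{\tau/2}$ (so $l_{0} := \Delta^{-\tau/2}y$) and $N := x/\Delta \in \{\Delta^{-1}/2,\ldots,\Delta^{-1}\} \cap \Z$, the condition $(x,y) + mv \in P$ is equivalent to the pair
\begin{displaymath}
(k_{m},l_{m}) := \bigl(k_{0}-m,\,l_{0} + m\Delta^{\tau/2-1}\bigr), \qquad k_{0} := \frac{N - l_{0}}{\Delta^{\tau/2-1}},
\end{displaymath}
belonging to $\{0,\ldots,\Delta^{-\tau/2}-1\}^{2} \cap \Z^{2}$ for every $m \in \{0,\ldots,M\}$ with $M := \tfrac{1}{2}\Delta^{1-\tau}$.

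The algebraic heart of this reduction is the following geometric observation: incrementing $l$ by $\Delta^{\tau/2-1}$ (an integer by the paper's normalisation) raises the $y$-coordinate of $R_{\theta}(\Delta^{\tau/2}k,\Delta^{\tau/2}l)$ by exactly $\Delta^{\tau-1}$, i.e.\ by $v$, while the induced horizontal shift $\Delta^{\tau/2-1}\cdot\Delta = \Delta^{\tau/2}$ in $\Delta l$ is cancelled by decrementing $k$ by one. Hence the chain $(k_{m},l_{m})_{m=0}^{M}$ does trace consecutive $v$-translates in $P$; all that remains is to ensure that the indices stay in range.

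To pin down $l_{0}$, I would take it to be the residue of $N$ modulo $\Delta^{\tau/2-1}$, chosen from $\{0,1,\ldots,\Delta^{\tau/2-1}-1\}$ (and, in the single edge case $N = \Delta^{-1}$ that forces $k_{0} = \Delta^{-\tau/2}$, bumped up by one copy of $\Delta^{\tau/2-1}$ to stay in range). This choice makes $k_{0}$ a non-negative integer in $\{0,\ldots,\Delta^{-\tau/2}-1\}$ with $k_{0} \geq \tfrac{1}{2}\Delta^{-\tau/2} - O(1)$, and leaves $y = l_{0}\Delta^{\tau/2} \leq 2\Delta^{\tau-1} < \tfrac{1}{2}$ for $\Delta$ small (this is where $\tau > 1$ enters). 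The constraint $k_{m} \geq 0$ for $m \leq M$ becomes $k_{0} \geq \tfrac{1}{2}\Delta^{1-\tau}$, which follows from $\Delta^{-\tau/2}/\Delta^{1-\tau} = \Delta^{\tau/2-1} \to \infty$ as $\Delta \to 0$; the constraint $l_{m} \leq \Delta^{-\tau/2}-1$ for $m \leq M$ reduces to the same asymptotic comparison via $l_{M} \leq 2\Delta^{\tau/2-1} + \tfrac{1}{2}\Delta^{-\tau/2}$. I do not foresee a genuine obstacle: the content of the claim is just that $v$ is short enough relative to the shearing rate of $R_{\theta}$ to allow $\tfrac{1}{2}\Delta^{1-\tau}$ successive lattice moves inside $P'$, and this is precisely what the comparison $\Delta^{1-\tau} \ll \Delta^{-\tau/2}$ (valid for $\tau \in (1,2)$ and small $\Delta$) encodes. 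The main bookkeeping subtlety is the extremal case $N = \Delta^{-1}$, which is handled by the "$l_{0} \mapsto l_{0} + \Delta^{\tau/2-1}$" correction described above.
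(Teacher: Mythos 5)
Your proposal is correct and follows essentially the same route as the paper: both reduce the claim to the lattice identity $\iota(k,l)+mv=\iota(k-m,l+m\Delta^{\tau/2-1})$ where $\iota(k,l)=(\Delta^{\tau/2}k+\Delta l,\Delta^{\tau/2}l)$, choose $(k_0,l_0)$ by dividing $N=x/\Delta$ by $\Delta^{\tau/2-1}$ with remainder, and then verify that the full chain $(k_m,l_m)$ stays inside $\{0,\dots,\Delta^{-\tau/2}-1\}^2$ using $x\ge \tfrac12$ and the exponent comparison $\tau/2-1>-\tau/2$ (i.e.\ $\tau>1$). The paper allows $0\le l_x\le\Delta^{\tau/2-1}$ (inclusive), which silently absorbs your boundary case $N=\Delta^{-1}$; otherwise the two arguments are the same modulo notation.
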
 

\begin{proof} Fix $x \in (\Delta \Z) \cap [\tfrac{1}{2},1]$. Then we may express $x$ in the form $x = \Delta^{\tau/2}k_{x} + \Delta l_{x}$ for some $\tfrac{1}{2}\Delta^{-\tau/2} \leq k_{x} \leq \Delta^{-\tau/2} - 1$ and $0 \leq l_{x} \leq \Delta^{\tau/2 - 1} \leq \tfrac{1}{2}\Delta^{-\tau/2} - 1$, using $\tau > 1$. In particular,
\begin{displaymath} (x,y) := (\Delta^{\tau/2}k_{x} + \Delta l_{x},\Delta^{\tau/2}l_{x}) \in P. \end{displaymath}
To proceed, we view $P$ as the image of the grid $\mathbb{G} = \{0,\ldots,\Delta^{-\tau/2} - 1\}^{2}$ under the map $\iota(k,l) = (\Delta^{\tau/2}k + \Delta l,\Delta^{\tau/2}l)$. It is easy to check that
\begin{displaymath} \iota(k,l) + mv = \iota(k - m,l + m\Delta^{\tau/2 - 1}), \qquad (k,l) \in \Z^{2}, \, m \in \Z. \end{displaymath}
Consequently, since $(x,y) = \iota(k_{x},l_{x}) \in P$, also $(x,y + mv) = \iota(k_{x},l_{x}) + mv \in P$ as long as
\begin{displaymath} (k_{x} - m,l_{x} + m\Delta^{\tau/2 - 1}) \in \mathbb{G}. \end{displaymath}
It remains to note that this is the case for $m \in \{0,\ldots,\tfrac{1}{2}\Delta^{1 - \tau}\}$. First, since $k_{x} \geq \tfrac{1}{2}\Delta^{-\tau/2}$, we have $k_{x} - m \geq 0$ for all $m \in \{0,\ldots,\tfrac{1}{2}\Delta^{1 - \tau}\}$ (noting that $\Delta^{1 - \tau} \leq \Delta^{-\tau/2}$). Similarly, since $l_{x} \leq \tfrac{1}{2}\Delta^{-\tau/2} - 1$, we have 
\begin{displaymath} l_{x} + m\Delta^{\tau/2 - 1} \leq (\tfrac{1}{2}\Delta^{-\tau/2} - 1) + \tfrac{1}{2}\Delta^{-\tau/2} = \Delta^{-\tau/2} - 1. \end{displaymath}
for all $m \in \{0,\ldots,\tfrac{1}{2}\Delta^{1 - \tau}\}$. Thus $(k_{x} - m,\ell_{x} + m\Delta^{\tau/2 - 1}) \in \mathbb{G}$ for $m \in \{0,\ldots,\tfrac{1}{2}\Delta^{-\tau/2}\}$, as claimed.  \end{proof}

The previous claim implies that if $x \in [\tfrac{1}{2},1]$, then $\mathcal{P} \cap (\{x\} \times \R)$ contains a $(\Delta,\tau - 1,C)$-set for an absolute constant $C > 0$. In Property (P1) we desired the same for all $x \in [0,1]$, but this problem can be fixed by replacing $\mathcal{P}$ by $\mathcal{P} \cup [\mathcal{P} - (\tfrac{1}{2},0)]$. This substitution has no impact on property (P2), which we verify next.

For notational convenience, we will here parametrise orthogonal projections as 
\begin{displaymath} \pi_{\lambda}(x,y) = x + \lambda y, \qquad \lambda \in \R. \end{displaymath} 
With this notation
\begin{displaymath} \pi_{\lambda}(R_{\theta}(x,y)) = (x + \theta y) + \lambda y = \pi_{\lambda + \theta}(x,y), \qquad (x,y) \in \R^{2}. \end{displaymath}
In particular, $\pi_{\lambda}(P) = \pi_{\lambda + \theta}(P')$ for all $\lambda \in \R$, where we recall that $\theta = \Delta^{1 - \tau/2} \in [0,1]$.

We then consider the following "direction set"
\begin{equation}\label{defEApp} \mathcal{E} = \mathcal{E}_{\Delta} = \left\{\lambda - \theta : \lambda = \tfrac{p}{q}, \, 0 \leq p, q \leq \Delta^{-s/2} \right\}. \end{equation}
We remark at this point that clearly $|\mathcal{E}| \lesssim \Delta^{-s}$.
\begin{claim}\label{c1} If $e \in \mathcal{E}$, or even $\dist(e,\mathcal{E}) \leq \Delta$, then $|\pi_{e}(\mathcal{P})|_{\Delta} \lesssim \Delta^{-(s + \tau)/2}$. \end{claim}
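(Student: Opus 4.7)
The plan rests on two observations. First, by the identity $\pi_\lambda \circ R_\theta = \pi_{\lambda+\theta}$ recorded just before the claim (with $\theta = \Delta^{1-\tau/2}$), we have $\pi_e(P) = \pi_{e+\theta}(P')$ for every real slope; hence if $e \in \mathcal{E}$ then $\lambda := e + \theta = p/q$ with $0 \leq p, q \leq \Delta^{-s/2}$ and $q \geq 1$. Second, since $\mathcal{P} = P + [0,\Delta)^2$ and (the relevant part of) $\mathcal{E}$ corresponds to slopes of bounded magnitude, the map $\pi_e$ is Lipschitz with constant $\sim 1$, so that $|\pi_e(\mathcal{P})|_\Delta \lesssim |\pi_e(P)|_\Delta$. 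It therefore suffices to bound the cardinality $|\pi_\lambda(P')|$ of the projected lattice.

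Writing $P' = \{(\Delta^{\tau/2}k, \Delta^{\tau/2}l) : 0 \leq k, l \leq \Delta^{-\tau/2} - 1\}$ and inserting $\lambda = p/q$, one finds
\[
\pi_\lambda(P') \;=\; \frac{\Delta^{\tau/2}}{q} \cdot \{\,kq + pl \;:\; 0 \leq k, l \leq \Delta^{-\tau/2} - 1\,\}.
\]
The integers $kq + pl$ appearing on the right lie in the interval $[0, (p+q)\Delta^{-\tau/2}]$, so there are at most $(p+q)\Delta^{-\tau/2} + 1$ distinct values. Using $p + q \leq 2\Delta^{-s/2}$, this gives
\[
|\pi_\lambda(P')| \;\leq\; 2\Delta^{-(s+\tau)/2} + 1 \;\lesssim\; \Delta^{-(s+\tau)/2},
\]
which is the desired bound on $|\pi_e(P)|_\Delta$ (the $\Delta$-covering number is always bounded by the cardinality of a discrete set). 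Combining with the reduction of the previous paragraph, $|\pi_e(\mathcal{P})|_\Delta \lesssim \Delta^{-(s+\tau)/2}$.

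For the robustness clause, if $\dist(e, \mathcal{E}) \leq \Delta$ I would pick $e_0 \in \mathcal{E}$ with $|e - e_0| \leq \Delta$: for any $z \in \mathcal{P}$ (which lies in a bounded region of $\R^2$) one has $|\pi_e(z) - \pi_{e_0}(z)| \lesssim \Delta$, so $\pi_e(\mathcal{P})$ and $\pi_{e_0}(\mathcal{P})$ are within Hausdorff distance $O(\Delta)$ of each other, and their $\Delta$-covering numbers agree up to an absolute constant. There is no real obstacle here: the argument is the classical observation that a rational-slope projection of a rational lattice collapses to a coarse arithmetic progression, and the rest is bookkeeping. The only mildly delicate point is ensuring that the Lipschitz constant of $\pi_e$ does not blow up, but this is automatic once one restricts $\mathcal{E}$ to the bounded arc of directions where the slope parametrization is comparable (up to constants) to the unit-vector one.
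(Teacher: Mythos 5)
Your proof is correct, and after the shared reduction to bounding $|\pi_\lambda(P')|$ with $\lambda = p/q$, your counting step takes a genuinely more direct route than the paper's. You observe that $\pi_\lambda(P') = (\Delta^{\tau/2}/q)\{kq + pl\}$ is a scaled set of integers confined to an interval of length $\lesssim (p+q)\Delta^{-\tau/2}$, so its cardinality is immediately $\lesssim \Delta^{-(s+\tau)/2}$. The paper instead argues by fiber counting: for each $r$ in the image it produces $\gtrsim \Delta^{(s-\tau)/2}$ preimages of the form $(x,y) + \Delta^{\tau/2}m(p,-q)$ inside an enlarged grid $\mathbb{H}$ with $|\mathbb{H}| \sim \Delta^{-\tau}$, and divides. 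Both exploit the rationality of $\lambda$ in an elementary way; yours reads as a cleaner one-line cardinality bound, while the paper's phrasing makes the ``lattice projections have large fibers'' heuristic explicit. You also spell out the $\dist(e,\mathcal{E}) \leq \Delta$ perturbation, which the paper leaves implicit; your treatment is fine, as $\pi_e$ and $\pi_{e_0}$ differ by $O(\Delta)$ on the bounded set $\mathcal{P}$ and hence their $\Delta$-covering numbers are comparable.
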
 

\begin{proof} Since $\mathcal{P} = P + [0,\Delta)^{2}$, it suffices to show that $|\pi_{e}(P)| \lesssim \Delta^{-(s + \tau)/2}$. Fix $e = \lambda - \theta \in \mathcal{E}$. Then $\pi_{e}(P) = \pi_{\lambda}(P')$, so it suffices to show that $|\pi_{\lambda}(P')| \lesssim \Delta^{-(s + \tau)/2}$.

Write $\lambda = \tfrac{p}{q}$ as in the definition of $\mathcal{E}$, then fix $r \in \pi_{\lambda}(P')$, and let $(x,y) = \Delta^{\tau/2}(k,l) \in P'$ with $\pi_{\lambda}(x,y) = r$. Notice that also
\begin{displaymath} \pi_{\lambda}((x,y) + \Delta^{\tau/2}(m p,-mq)) = r + \Delta^{\tau/2}m \cdot (p - \tfrac{p}{q} \cdot q) = r, \qquad m \in \Z. \end{displaymath} 
Since $p,q \leq \Delta^{-s/2}$, we have $mp,mq \leq \Delta^{-\tau/2}$ for all $m \in \N \cap [0,\Delta^{(s - \tau)/2}]$. For such values of "$m$", we observe that
\begin{displaymath} (x,y) + \Delta^{\tau/2}(mp,-mq) \in \Delta^{\tau/2} \cdot \{-\Delta^{-\tau/2},\ldots,\Delta^{-\tau/2}\}^{2} =: \mathbb{H}. \end{displaymath} 
We have shown that $|\pi_{\lambda}^{-1}\{r\} \cap \mathbb{H}| \geq \Delta^{(s - \tau)/2}$. Since this was true for all $r \in \pi_{\lambda}(P')$, we deduce that $|\pi_{\lambda}(P')| \leq \Delta^{(\tau - s)/2} |\mathbb{H}| \sim \Delta^{-(s + \tau)/2}$, as claimed. \end{proof}

To complete the proof of property (P2), it remains to show that $\mathcal{E}$ contains a $\Delta^{s}$-separated subset of cardinality $\sim \Delta^{-s}$. We prove the following stronger claim:

\begin{claim}\label{dirichletClaim} Let $I \subset [0,1]$ be an interval with $|I| \geq C\Delta^{s/2}\log(1/\Delta)$ for a sufficiently large constant $C > 0$. Then $|\mathcal{E} \cap I|_{\Delta^{s}} \sim |I|\Delta^{-s}$. In particular $|\mathcal{E}|_{\Delta^{s}} \sim \Delta^{-s}$. \end{claim}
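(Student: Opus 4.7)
The plan is to reduce Claim \ref{dirichletClaim} to a classical counting estimate for the Farey sequence. Write $N := \Delta^{-s/2}$, so that $\Delta^{s} = N^{-2}$ and the hypothesis $|I| \geq C\Delta^{s/2}\log(1/\Delta)$ is equivalent (for $s$ bounded away from $0$) to $|I| \gtrsim (\log N)/N$ with a large implicit constant. The upper bound $|\mathcal{E} \cap I|_{\Delta^{s}} \lesssim |I|\Delta^{-s}$ is immediate (at most $|I|/\Delta^{s} + O(1)$ dyadic $\Delta^{s}$-intervals intersect $I$), so only the matching lower bound requires work.

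First I would observe that $\mathcal{E} + \theta$ coincides with the set of fractions $\{p/q : 1 \leq q \leq N, \, 0 \leq p \leq N\}$, and that any two \emph{distinct} such fractions satisfy
\[ |p_{1}/q_{1} - p_{2}/q_{2}| \geq \tfrac{|p_{1}q_{2} - p_{2}q_{1}|}{q_{1}q_{2}} \geq \tfrac{1}{N^{2}} = \Delta^{s}. \]
Thus distinct fractions lie in distinct $\Delta^{s}$-intervals (up to a bounded overlap factor), and $|\mathcal{E} \cap I|_{\Delta^{s}}$ is comparable to the number of distinct fractions $p/q$ with $p,q \leq N$ lying in $I + \theta$. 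Identifying each non-reduced fraction with its unique reduced form, this count reduces (modulo a symmetric and negligible contribution from the range $p/q > 1$, which is handled identically after a unit translation) to estimating $|F_{N} \cap (I + \theta)|$, where $F_{N}$ denotes the Farey sequence of order $N$ in $[0,1]$.

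Next I would invoke the classical Farey counting asymptotic
\[ |F_{N} \cap J| = \tfrac{3}{\pi^{2}} |J| N^{2} + O(N \log N), \qquad J \subset [0,1], \]
which is a short consequence of M\"obius inversion: the number of integers in an interval of length $|J|q$ coprime to a fixed $q$ is $|J|\phi(q) + O(2^{\omega(q)})$, and summing over $q \leq N$ gives a main term $|J| \sum_{q \leq N} \phi(q) = (3/\pi^{2})|J| N^{2} + O(|J| N \log N)$ and an error $O(\sum_{q \leq N} 2^{\omega(q)}) = O(N \log N)$. Applied with $J = I + \theta$ and $|I| \geq C(\log N)/N$, the main term $|I| N^{2}$ dominates the $N \log N$ error once $C$ is sufficiently large, yielding $|F_{N} \cap (I+\theta)| \sim |I| N^{2}$, and hence $|\mathcal{E} \cap I|_{\Delta^{s}} \sim |I|\Delta^{-s}$ as claimed. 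The final statement $|\mathcal{E}|_{\Delta^{s}} \sim \Delta^{-s}$ follows by taking $I = [0,1]$.

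The only real (and quite minor) obstacle is the Farey counting asymptotic itself, but this is a textbook result, and its $O(N \log N)$ error term is exactly what forces the threshold $|I| \gtrsim \Delta^{s/2}\log(1/\Delta)$ in the hypothesis: this is precisely the length at which the main term of the Farey counting function overtakes its canonical error.
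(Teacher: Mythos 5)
Your proof is correct, but it follows a genuinely different route from the paper's. The paper's argument is based on Dirichlet's theorem on Diophantine approximation: it covers a subinterval $\tfrac{1}{2}I$ by arcs $B(p/q, \Delta^{s/2}/q)$ with $q\le\Delta^{-s/2}$, then shows by a direct measure estimate that the contribution of the ``small $q$'' arcs (with $q\le c\Delta^{-s/2}$) occupies at most a quarter of $\tfrac12 I$, so the remaining $\gtrsim |I|$ portion of $\tfrac12 I$ must be covered by arcs of radius $\lesssim\Delta^{s}$, forcing $\gtrsim|I|\Delta^{-s}$ distinct $\Delta^{s}$-separated rationals. You instead reduce the lower bound to the classical local Farey counting asymptotic $|F_{N}\cap J|=\tfrac{3}{\pi^{2}}|J|N^{2}+O(N\log N)$ with $N=\Delta^{-s/2}$, and correctly observe that the hypothesis $|I|\gtrsim\Delta^{s/2}\log(1/\Delta)$ is exactly the threshold at which the main term beats the error term. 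Both arguments are elementary and of comparable length; the paper's is marginally more self-contained (it needs only Dirichlet's pigeonhole theorem, not the $\sum_{q\le N}\phi(q)$ asymptotic nor the bound $\sum_{q\le N}2^{\omega(q)}=O(N\log N)$), while yours makes the role of the logarithmic factor in the hypothesis more transparent and yields a genuine asymptotic rather than a two-sided estimate. The minor points you handle by remark --- passing from not-necessarily-reduced $p/q$ to the reduced Farey fractions, the extra contribution from $p/q>1$, and the equivalence between the $\Delta^{s}$-covering number and the count of $\Delta^{s}$-separated rationals --- are indeed all harmless, as you say.
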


\begin{remark} This claim is no doubt well-known, and follows from the equidistribution properties of the \emph{Farey sequence}. We nevertheless give the brief details. \end{remark}

\begin{proof}[Proof of Claim \ref{dirichletClaim}] Recall Dirichlet's theorem on diophantine approximation: for every $x \in [0,1]$ and $N := \Delta^{-s/2} \in \N$, there exist $p,q \in \Z$ such that $1 \leq q \leq \Delta^{-s/2}$ and
\begin{equation}\label{form39} \left|x - \frac{p}{q}\right| \leq \frac{\Delta^{s/2}}{q}. \end{equation}
In particular this is true for $x \in \tfrac{1}{2}I$, where $I \subset [0,1]$ is the interval from the statement. In this case it follows from \eqref{form39} that additionally $\dist(p,q(\tfrac{1}{2}I)) \leq \Delta^{s/2} \leq 1$. Therefore $p \in \Z$ must lie in an interval $I_{q} \supset q(\tfrac{1}{2}I)$ of length $|I_{q}| \leq q|I| + 1$.

We rewrite the conclusion of Dirichlet's theorem as the inclusion
\begin{displaymath} \tfrac{1}{2}I \subset \bigcup_{q = 1}^{\Delta^{-s/2}} \bigcup_{p \in I_{q} \cap \Z} B\left(\tfrac{p}{q},\tfrac{\Delta^{s/2}}{q}\right). \end{displaymath}
Let us consider the question: how much of $\tfrac{1}{2}I$ can be covered by that part of the previous union where $q \leq c\Delta^{-s/2}$? Here $c > 0$ is a suitable constant to be determined in a moment. By sub-additivity, and taking into account that $\card(I_{q} \cap \Z) \leq 2q|I| + 2$, the measure of this "bad" part $I_{\mathrm{bad}} \subset \tfrac{1}{2}I$ is at most
\begin{displaymath} |I_{\mathrm{bad}}| \leq \sum_{q = 1}^{c\Delta^{-s/2}} (2q|I| + 2) \cdot \frac{\Delta^{s/2}}{q} \lesssim c|I| + \Delta^{s/2}\log(1/\Delta) \leq 2c|I|,   \end{displaymath}
recalling our assumption that $|I| \geq C\Delta^{s/2}\log(1/\Delta)$. Now, we choose $c > 0$ small enough (depending on the implicit constants above) that $|I_{\mathrm{bad}}| \leq \tfrac{1}{4}|I|$. Then $|\tfrac{1}{2}I \, \setminus \, I_{\mathrm{bad}}| \geq \tfrac{1}{4}|I|$, and 
\begin{displaymath} \tfrac{1}{2}I \, \setminus \, I_{\mathrm{bad}} \subset \bigcup_{q = c\Delta^{-s/2}}^{\Delta^{-s/2}} \bigcup_{p \in I_{q} \cap \Z} B\left(\tfrac{p}{q},\tfrac{\Delta^{s/2}}{q}\right) \subset \bigcup_{q = 1}^{\Delta^{-s/2}} \bigcup_{p \leq q} B\left(\tfrac{p}{q},c^{-1}\Delta^{s}\right). \end{displaymath}
Since each ball (or interval) on the right has length $2c^{-1}\Delta^{s}$, in order to cover $\tfrac{1}{2}I \, \setminus \, I_{\mathrm{bad}}$, we need at least $\sim |I|\Delta^{-s}$ \textbf{distinct} centres $p/q$. We may assume that $B(p/q,c^{-1}\Delta^{s}) \cap \tfrac{1}{2}I \neq \emptyset$ for all these $p/q$, which implies that $p/q \in I$ by our assumption $|I| \gg \Delta^{s/2}$. In summary, we have now found $\sim |I|\Delta^{-s}$ distinct rational numbers $p/q \in I$ with $p \leq q \leq \Delta^{-s/2}$. Since the separation of such rationals $p_{1}/q_{2} \neq p_{2}/q_{2}$ is automatically
\begin{displaymath} \left|\frac{p_{1}}{q_{1}} - \frac{p_{2}}{q_{2}} \right| \geq \frac{1}{q_{1}q_{2}} \geq \Delta^{s}, \end{displaymath}
we have proved the claim. \end{proof}

Finally, let us verify property (E), namely that if the sequence $\{\Delta_{n}\}$ decays so rapidly that $\Delta_{n} < \Delta_{n - 1}^{n}$ for all $n \geq 1$, then
\begin{equation}\label{form40} E = \bigcap_{n \in \N} (\cup \mathcal{E}_{\Delta_{n}}) = s. \end{equation}
There is a slight inconsistency in our notation: in \eqref{defEApp} we defined $\mathcal{E}_{\Delta_{n}}$ to consist of all shifted rationals of the form $p/q - \theta$, where $\theta = \Delta^{1 - \tau/2} = o_{\Delta \to 0}(1)$. However, in \eqref{form40} the set $\mathcal{E}_{\Delta_{n}}$ stands for the $\Delta_{n}$-neighbourhood of this set. In fact, in the argument below we will view and handle $\mathcal{E}_{\Delta_{n}}$ as a union of closed $\Delta_{n}$-intervals. Notice that Claim \ref{c1} and property (P2) are blind to such distinctions. 

The idea is to apply Lemma \ref{lemma6} to a suitably constructed Cantor subset of $E$. Namely, for sufficiently large indices $n \geq 1$ it follows from the rapid decay of $\{\Delta_{n}\}$ that
\begin{displaymath} \Delta_{n - 1} \geq C\Delta_{n}^{s/2}\log(1/\Delta_{n}). \end{displaymath}
For these indices Claim \ref{dirichletClaim} implies that
\begin{displaymath} |\mathcal{E}_{\Delta_{n}} \cap I|_{\Delta_{n}^{s}} \sim \Delta_{n - 1}\Delta_{n}^{-s}, \qquad I \in \mathcal{E}_{\Delta_{n - 1}} \text{ with } I \subset [0,1]. \end{displaymath}
In particular, applying once more the rapid decay of $\{\Delta_{n}\}$, we have $|\mathcal{E}_{\Delta_{n}} \cap I|_{\Delta_{n}^{s}} \geq (\Delta_{n - 1}/\Delta_{n})^{s - 1/n}$ for all these $I \in \mathcal{E}_{\Delta_{n - 1}}$, and for $n \geq 1$ sufficiently large. In particular $\mathcal{E}_{\Delta_{n}} \cap I$ is a relative $(\Delta_{n},s - \tfrac{1}{n},C)$-subset of $I$ for all $I \in \mathcal{E}_{\Delta_{n - 1}}$ with $I \subset [0,1]$, and $n \geq 1$ sufficiently large. Now \eqref{form40} follows from Lemma \ref{lemma6}.

\bibliographystyle{plain}
\bibliography{references}

\end{document}